\renewcommand{\epsilon}{\varepsilon}
\renewcommand{\hat}{\widehat}
\renewcommand{\tilde}{\widetilde}
\renewcommand{\bar}{\overline}
\renewcommand{\le}{\leqslant}
\renewcommand{\ge}{\geqslant}
\def\Laq{(-\Delta)^{\frac{1}{4}}}
\def\Lah{(-\Delta)^{\frac{1}{2}}}
\newcommand{\besov}{\dot{B}^{\frac{n}{2}}_{2,1}}
\newcommand{\R}{\mathbb{R}}
\newcommand{\Z}{\mathbb{Z}}
\newcommand{\Hy}{\mathbb{H}^2}
\newcommand{\be}{\begin{equation}}
\newcommand{\ee}{\end{equation}}
         \title[]{Global Well-Posedness For Half-Wave Maps With $S^2$ and $\Hy$ Targets For Small Smooth Initial Data}
         \date{}
         \author[]{Yang Liu}
         \address{B\^atiment des Math\'ematiques \\ EPFL \\ Station 8 \\ 1015 Lausanne \\ Switzerland}
         \email{y.liu@epfl.ch}
\begin{document}
        \begin{abstract}
            We prove global well-posedness for the half-wave map with $S^2$ target for small $\dot{H}^{\frac{n}{2}} \times \dot{H}^{\frac{n}{2}-1}$ initial data. We also prove the global well-posedness  for the equation with $\Hy$ target for small smooth $\besov \times \dot{B}^{\frac{n}{2}-1}_{2,1}$ initial data. 
        \end{abstract}
        \maketitle
      \setcounter{tocdepth}{1}
    \tableofcontents
        \section{Introduction}
        Let $u: \R^{n+1}\rightarrow M \subseteq \R^3$, where $M=\Hy$ or $S^2$, be smooth and bounded with the property that $\nabla_{t,x}u(t,\cdot)\in L^r(\R^n)$ for some $r\in (1,\infty)$, and furthermore $\lim_{|x|\rightarrow +\infty}u(t, x) = Q$ for some fixed $Q\in M$, for each $t$. We define the operator $\Lah u = -\sum_{j=1}^n (-\triangle)^{-\frac12}\partial_j (\partial_j u)$, and we say that $u$ is a {\it{half-wave map}} if it satisfies

\begin{equation}
    \label{eq:halfwm}
\partial_t u = u\times_{\eta} \Lah u.
\end{equation}
We define $a\times_{\eta} b:= (\eta,1,1) (a \times b)$, where $\eta=\pm 1$ for $M=S^2,\ \Hy$ respectably. \par

The model is formally related to the Schrödinger maps equation 
\[
u_t = u \times \triangle u,    
\]
and the classical wave equation
\[
    \Box u=\partial_\alpha \partial^\alpha u=-u \partial_\alpha u^T \partial^\alpha u.
\]
    
Therefore, one way to consider the problem is to reformulate \eqref{eq:halfwm} into a nonlinear wave type equation as in Krieger and Sire \cite{krieger2017small}:
\begin{equation}
    \label{eq:halfwm-wave}
    \begin{split}
        \Box u = (\partial^2_t - \Delta) u =
    &(\nabla u \cdot \nabla u-\partial_t u \cdot \partial_t u) u \\
    &+\Pi_{u_{\perp}}\left[\ (u \cdot \Lah u )\ \Lah u \right]  \\
    &+\Pi_{u_{\perp}} \left[u \times \Lah \left(u \times \Lah u \right) - u \times \left(u \times (-\Delta) u \right)\right] 
    \end{split}
\end{equation} \par

The advantage of this reformulation is that we can use the well-established wave equation theory to solve the problem (e.g. \cite{2000mathTao}, \cite{sterbenz2005global}). We first prove the global well-posedness of \eqref{eq:halfwm-wave} with small critical initial data, then we will show that such a solution $u$ also solves the original half-wave map equation \eqref{eq:halfwm} in Theorem \ref{thm:solution}.

The global well-posedness of the Cauchy problem \eqref{eq:halfwm} with target $S^2$ for small $\besov\times \dot{B}^{\frac{n}{2}-1}_{2,1}$ initial data was established by Krieger and Sire \cite{krieger2017small} for $n\geq 5$. The result was later improved by Kiesenhofer and Krieger \cite{2019arXiv190412709K} to $n=4$.  \par
In this work, the author establishes the global well-posedness with $S^2$ target \eqref{eq:halfwm} for small $\dot{H}^{\frac{n}{2}} \times \dot{H}^{\frac{n}{2}-1}$ initial data and the global well-posedness problem for small smooth $\besov \times \dot{B}^{\frac{n}{2}-1}_{2,1}$ initial data with target $\Hy$. \par

\subsection{General Kähler Manifold}
 We propose a general formulation of the half-wave equation with a Kähler manifold target. Consider a Kähler manifold $(M, g, J, \omega)$, where $M$ 
is a smooth $m$-dimensional complex manifold. It also has a structure of a real $2m$-dimensional smooth manifold. $(M,g)$ is a smooth $2m$-dimensional Riemannian manifold embeds into $\R^k$, $k\geq 2m+1$, by the Nash embedding theorem. For a Kähler manifold, we have a compatible triple $(g, \omega, J)$, where $J$ is an integrable almost-complex structure of the complex manifold $M$ such that $J^2=-id$ and $\omega$ is a closed symplectic two-form defined by
$$\omega(X,Y):=g(JX,Y), \ \forall X,Y \in TM.$$ 

Moreover, a Kähler manifold is also Hermitian, hence we have $g(JX,JY)=g(X,Y)$, $\forall X,Y \in TM$. \par
The half-wave map $u$ defined on $\R^{n+1}$ with a Kähler manifold $M$ target satisfies
 \begin{equation}
    \label{eq:kahler}
     \partial_t u = J(u)\left[P_T(u) \Lah u \right].
 \end{equation}

 Here $P_T \in C^1 (\R^k, \mathcal{M}_{k}(\R))$ is the orthogonal projection onto $T_z M$ as in the work of Da Lio-Rivière \cite{DaLioFrancesca2016HaM} defined in the following:
 \begin{Definition}
    \label{tp}
    Let $P_T,P_N \in C^1(\mathbb{R}^k,M_{k}(\mathbb{R}))$ where
    $M_{k}(\mathbb{R})$ is the set of $k\times k$ matrices with entries taking values in
    $\mathbb{R}$. Let $P_T$  be the orthogonal projection of $T_z M$ and
    $P_N$ be the orthogonal projection onto $T_z^{\perp} M$ s.t.
    \begin{enumerate}[label=(\roman*)]
      \item $P_T \circ P_T =P_T$ \\
      \item $P_N \circ P_N =P_N$ \\
      \item$ P_T+P_N = 1_{k \times k}$ \\
      \item $\|\partial_z P_T \|_{L^{\infty}} < \infty$ \\
      \item $\langle P_T(z)U) \cdot (P_N(z)V \rangle=0 \quad \forall z\in \mathbb{R}^{k},\ \forall U,V\in T_z{\mathbb{R}^{k}}$
    \end{enumerate}
  \end{Definition}

We can view $S^2$ and $\Hy$ as Kähler manifolds with the almost complex structure $J(u):= u \times_{\eta}$. In this paper, we focus on the half-wave map equation with these two targets,
 \begin{equation*}
     \label{hwm:s}
     \partial_t u = u \times_{\eta} \Lah u.
 \end{equation*} 
\subsection{Critical $S^2$ Case}

We study the global well-posedness problem of the half-wave map with
smooth initial data $u[0]:=\big(u(0,\cdot),\partial_t u(0,\cdot) \big)=(u_0,u_1): \R^n \rightarrow S^2 \times TS^2$ s.t. 
$u_1 = u_0 \times \Lah u_0$, and 
\begin{equation*}
    \|u[0]\|_{\dot{H}^{\frac{n}{2}} \times \dot{H}^{\frac{n}{2}-1}} < \epsilon,
\end{equation*}
where $\epsilon  \ll 1$ is sufficiently small. \par
We improve the result from $\besov \subseteq  L^\infty$ to the slightly larger space $\dot{H}^{\frac{n}{2}}$ which does not embed into $L^\infty$. It is worth noting that both spaces share the same scaling invariance property.
As discussed above, the half-wave map can be reformulated into a nonlinear wave-type equation \eqref{eq:halfwm-wave}.  The global well-posedness problem for the wave map with critical initial data has been studied by Tao \cite{2000mathTao}, Klainerman-Rodnianski \cite{klainerman2001global} and
Shatah-Struwe \cite{shatah2002cauchy}. In particular, we adopt the frequency fold localization method and the approximately parallel transport argument from the work of Tao \cite{2000mathTao}. In Section \ref{sec:global}, we will show that all the additional nonlinear terms introduced in the reformulated half-wave equation \eqref{eq:halfwm-wave} are `` well-behaved ''  to apply such methods. \par
Our result is as follows.

\begin{theorem}
    \label{thm:s2}
    Let $n \geq 5$, $u: \R^{1+n} \rightarrow S^2$, such that $\lim_{|x|\rightarrow \infty} \ u(t,x) =Q$ for a fixed $Q\in S^2$, for all t. Let $u[0]:=\big(u(0,\cdot),\partial_t u(0,\cdot) \big)=(u_0,u_1): \R^n \rightarrow S^2 \times TS^2$ s.t. 
    $u_1 = u_0 \times \Lah u_0$ be a smooth pair of data satisfying \eqref{eq:halfwm} with $u(0,\cdot)$ is constant outside a compact subset of $\R^n$ (this ensure that $\Lah u(0,\cdot)$ is well defined ) and
    \begin{equation}
        \label{con:init_s2}
        \|u[0]\|_{\dot{H}^{\frac{n}{2}} \times \dot{H}^{\frac{n}{2}-1}} < \epsilon,
    \end{equation}
    where $\epsilon  \ll 1$ is sufficiently small. \par
     Then the Cauchy problem for \eqref{eq:halfwm} with initial data $u[0]$ admits a smooth global solution.
     
\end{theorem}
\begin{Remark}
   We restrict to $n\geq 5$ because we want to take advantage of the Strichartz estimate $L_t^2L_x^4$ for the nonlinearities, which is not available when $n=4$. In that case, we refer readers to techniques used in Kiesenhofer and Krieger \cite{2019arXiv190412709K}.
\end{Remark}

In order to prove this theorem, we first establish the local well-posedness of \eqref{eq:halfwm-wave} with more regular $\dot{H}^s \times \dot{H}^{s-1}, \ s>\frac{n}{2},$ initial data in Section \ref{sec:local}. Then we adopt the ideas from Tao \cite{T1} to extend this result to global well-posedness with small critical initial data in Section \ref{sec:global}. In the end, we show that the solution of \eqref{eq:halfwm-wave}  solves the original equation \eqref{eq:halfwm} to close the argument.

\subsection{Critical Besov $\Hy$ Case}
The non-compactness of $\Hy \subseteq \R^3$ presents difficulties when working with $\dot{H}^{\frac{n}{2}} \times \dot{H}^{\frac{n}{2}-1}$ initial data as we lose the $L^\infty$ bound. In the context of the wave equation with target $\Hy$, the global well-posedness problem has been solved by Krieger \cite{krieger2003global},\cite{krieger2004global}. Dealing with the non-local terms of the half-wave equation is more delicate. Instead, we extend the global well-posedness result of \cite{krieger2017small} and \cite{2019arXiv190412709K} for the $S^2$ target to the $\Hy$ target with the help of the intrinsic distance function of the hyperbolic plane. \par

The intrinsic hyperbolic distance on $\mathbb{H}^2$ is defined by
\begin{equation}
    \label{h2dist}
    d_H(p,q)= \text{arccosh}(-p \cdot_{\eta} q) = \ln (-p \cdot_{\eta} q+\sqrt{(p \cdot_{\eta} q)^2-1}),
\end{equation}
 for $p,q \in \mathbb{H}^2$. Note here that $p \cdot_\eta q \le -1$, $\forall p,q \in \Hy$. We also have the induced Riemannian metric $ds^2=-dx_0^2 +dx_1^2+dx_3^2$ on $\mathbb{H}^2$. \par
 For a given fixed point $Q\in \Hy$, we define the $L^p$ spaces taking values in $\Hy$ by
 $$L_Q^p(\R^{n+1};\Hy):=\left\{ f:\R^{1+n} \rightarrow \mathbb{H}^2 \big|\ d_H(f(x),Q)\in L^p \right\},$$
 as in the work of \cite{KorevaarNicholasJ1993Ssah} and \cite{JostJurgen1994Embm}. 

 We make similar definitions for the Sobolev spaces, Besov spaces, and others. 
Adopting the intrinsic distance function,  we have a similar theorem for the $\Hy$ targeted half-wave map:

 \begin{theorem}
    \label{thm:h2}
    Let $n\ge 4$, $u: \R^{1+n} \rightarrow \Hy \subseteq \R^3$ bounded such that $\lim_{|x|\rightarrow \infty} \ u(t,x) =Q$ for a fixed $Q\in \Hy$, for all t. Let $u[0]:=\big(u(0,\cdot),\partial_t u(0,\cdot) \big)=(u_0,u_1): \R^n \rightarrow \Hy \times T \Hy$ where
    $u_1 = u_0 \times_\eta \Lah u_0$ be a smooth pair of data satisfying \eqref{eq:halfwm} s.t. $u(0,\cdot)$ is constant outside a compact subset of $\R^n$ and
    \begin{equation}
        \label{con:init_hyper}
        \|u[0]\|_{\dot{B}^{\frac{n}{2}}_{2,1;Q} \times \dot{B}^{\frac{n}{2}-1}_{2,1;Q}} < \epsilon,
    \end{equation}
    where $\epsilon  \ll 1$ is sufficiently small. \
    
    Then \eqref{eq:halfwm} admits a global smooth solution. 
\end{theorem}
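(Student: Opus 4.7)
The plan is to reduce \eqref{eq:halfwm} with target $\Hy$ to a wave-type equation of the same shape as \eqref{eq:halfwm-wave}, and then adapt the Besov-level machinery of Krieger--Sire \cite{krieger2017small} and Kiesenhofer--Krieger \cite{2019arXiv190412709K} (which produced the analogous small-data global result for $S^2$) to the hyperbolic target. The key new ingredient, which must be threaded through every step, is that $\Hy$ is non-compact so one loses the free extrinsic $L^\infty$ bound on $u$ that the $S^2$ analysis uses routinely; one compensates using the intrinsic distance $d_H$ and the basepoint-anchored Besov space $\dot{B}^{n/2}_{2,1;Q}$.

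First I would derive the wave reformulation. Writing $u\cdot_\eta u = -1$ for $\Hy$, differentiation gives $u\cdot_\eta \partial u = 0$, and the same projection-plus-commutator manipulation that produced \eqref{eq:halfwm-wave} yields a wave equation of identical structure, with $\times$ and $\cdot$ replaced by $\times_\eta$ and $\cdot_\eta$ and some signs flipped; these sign changes are absorbed into $\eta$ and do not affect any absolute-value estimate. Next I would set up the function-space framework: verify that $\dot B^{n/2}_{2,1;Q}(\R^n;\Hy)$ injects into $L^\infty_{d_H,Q}$, i.e., $d_H(u,Q)\in L^\infty$, and then exploit $d_H(u,Q)=\mathrm{arccosh}(-u\cdot_\eta Q)$ to translate this intrinsic $L^\infty$ control into an effective extrinsic bound on $u-Q$ in $\R^3$. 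This is the surrogate that will replace every appearance of $\|u\|_{L^\infty}\lesssim 1$ in the $S^2$ argument.

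With that surrogate in place, I would run a Picard iteration in a critical $L^\infty_t \dot B^{n/2}_{2,1}\cap$ Strichartz norm for the wave reformulation, using the standard paraproduct decomposition to estimate each of the three trilinear pieces in \eqref{eq:halfwm-wave} (with $\times_\eta$ in place of $\times$). For the non-local pieces involving $\Lah$, I would use the commutator identity from Krieger--Sire to trade $\Lah$ derivatives for $\nabla$ derivatives via $u\cdot_\eta \partial u=0$. Because the Besov space $\dot B^{n/2}_{2,1}$ embeds into $L^\infty$ (and because this is the reason $n\ge 4$ suffices here, unlike the Sobolev critical case), one avoids the $L^2_t L^4_x$ Strichartz obstruction that forces $n\ge 5$ in Theorem~\ref{thm:s2}. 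Local well-posedness at higher regularity plus a standard continuity/bootstrap argument using the small critical norm then extends the solution globally and upgrades smoothness.

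The main obstacle is checking that the intrinsic Besov control is actually preserved by the iteration: one must show that if $u^{(k)}$ takes values in $\Hy$ and $d_H(u^{(k)},Q)$ is small in the relevant space, then $u^{(k+1)}$ produced by solving the linear wave equation with the trilinear right-hand side also satisfies the constraint $u^{(k+1)}\cdot_\eta u^{(k+1)}=-1$ (up to closing the argument at the fixed point) and the same intrinsic smallness. This preservation is where the projection structure $P_T(u)$ from Definition~\ref{tp} plays its role, and where the non-compactness of $\Hy$ is most dangerous: a naive control on $|u|_{\R^3}$ could blow up even while $d_H(u,Q)$ stays bounded, so every extrinsic product estimate must be rewritten to consume the hyperbolic distance rather than the ambient norm. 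Once that bookkeeping is done, the remainder of the argument is a direct transcription of the $S^2$ Besov proof, followed by the verification that the obtained solution of \eqref{eq:halfwm-wave} actually solves the original first-order equation \eqref{eq:halfwm}.
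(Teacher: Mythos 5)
Your overall strategy is the same as the paper's: reformulate \eqref{eq:halfwm} to the wave-type equation \eqref{eq:waveh2} with $\cdot_\eta,\times_\eta$, use the composition estimate (Theorem~\ref{thm:funcaction}) applied to a smooth extension of $\mathrm{arccosh}$ vanishing at $0$ to pass between the intrinsic Besov norm of $d_H(u,Q)$ and the extrinsic Besov norm of the ambient coordinates $u^i$, note that $\besov\hookrightarrow L^\infty$ restores the $L^\infty$ control lost from the non-compactness of $\Hy$, and then transport the $S^2$ machinery (multilinear estimates as in Proposition~\ref{prop:multilinear_h2}, iteration, final check that the wave solution solves \eqref{eq:halfwm}). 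That is exactly the paper's route.

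However, your last paragraph contains a geometric error that inverts the comparison and makes the task look harder than it is. You claim that ``a naive control on $|u|_{\R^3}$ could blow up even while $d_H(u,Q)$ stays bounded'' and therefore ``every extrinsic product estimate must be rewritten to consume the hyperbolic distance rather than the ambient norm.'' The opposite is true: $\Hy$ is a closed submanifold of $\R^3$, so a bounded $d_H$-ball about $Q$ is a compact subset of $\R^3$; explicitly, if $Q=(1,0,0)$ then $|u|_{\R^3}^2=\cosh\bigl(2\,d_H(u,Q)\bigr)$, so bounded intrinsic distance forces bounded ambient norm. Once the Besov embedding pins $d_H(u,Q)\lesssim 1$, the paper simply treats $u=(u^1,u^2,u^3)$ ``as an $\R^3$ vector without the hyperbolic plane constraint,'' runs the extrinsic multilinear estimates exactly as in the $S^2$ case (up to $\eta$-signs), and invokes Theorem~\ref{thm:funcaction} only at the bookends to relate intrinsic and extrinsic Besov data. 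No intrinsic rewriting of the product estimates is needed; the extra bookkeeping you propose stems from the reversed comparison. A minor further point: the restriction $n\ge 4$ (rather than $n\ge 5$) comes from following the refined Besov argument of Kiesenhofer--Krieger \cite{2019arXiv190412709K}, not merely from the embedding $\besov\hookrightarrow L^\infty$.
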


It is worth noting that if the target is a compact subset of $\Hy$, we can establish the global-well posedness for small $\dot{H}^{\frac{n}{2}} \times \dot{H}^{\frac{n}{2}-1}$ initial data as in the $S^2$ case . \par

\subsection{Physical Background} 
The half-wave map equation \eqref{eq:halfwm} admits a conserved energy
\[
E(t):=\int_{\R^n} |\Laq u|^2 dx. 
\] \par

This gives the a priori condition that $u(t)\in \dot{H}^{\frac{1}{2}}(\R^n)$ which implies that the \textit{half-wave map} is energy-critical in dimension $n=1$. In the work of Lenzmann and Schikorra \cite{LenzmannEnno2018Oehm}, the authors give a full classification of the travelling solitary waves for the energy-critical problem with target $S^2$ for $n=1$. However, there has not been much progress for more general global well-posedness of \eqref{eq:halfwm} in low dimensions $n\leq 3$. This is partly due to the lack of the Strichartz estimates used in the higher dimensional case for the null structure. We also point out that the conserved energy is closely related to the study of fractional harmonic maps by Da Lio and Rivi\`ere (see \cite{DaLioFrancesca2016HaM}, \cite{francesca2011three},\cite{da2011sub}).  \par

The one-dimensional energy-critical half-wave maps are of notable physics interest. It's intensively studied in the works of \cite{berntson2020multi},  \cite{zhou2015solitons},  \cite{gerard2018lax}, \cite{lenzmann2018short}, and \cite{enno2018energy}. The one-dimensional half-wave maps arise as a continuum limit of the discrete 
{\em Calogero-Moser (CM) spin system.} For the CM systems, we refer the reader to \cite{gibbons1984generalisation} and \cite{blom1999finding} in which the authors study the theory of completely integrable systems. In addition, the classical CM spin systems can be obtained by taking a suitable semiclassical limit of the quantum spin chains related to the well-known {\em Haldane-Shastry (HS) spin chains}, see e.g.\cite{haldane1988exact} \cite{shastry1988exact}, which are exactly solvable quantum models. \par

\noindent {\bf Acknowledgements:}
The author greatly thanks his supervisor Joachim Krieger for his guidance and 
fruitful discussion throughout the project. He also thanks Tobias Schmid for his generous help with hyperbolic geometry.

\section{Transform to Wave Equation Form}
\label{sec:transform}
We first reformulate the half-wave equation
\begin{equation*}
    u_t = u\times_{\eta} \Lah u
\end{equation*}
as a nonlinear wave equation involving additional non-local terms. We focus on the target $\Hy$ variant here. The $S^2$ case is in the work \cite{krieger2017small}. \par
By using the cross product rule of $\Hy$:
$$a \times_{\eta} b :=  \eta (a \times b),$$
$$
a \cdot_\eta b := (\eta a) \cdot b, 
$$
and 
$$(a\times_{\eta} b) \times_{\eta} c =( c\cdot_{\eta} b)\ a - (a \cdot_\eta b) \ b.$$ 
We compute
\begin{align*}
    \Box u = (\partial^2_t - \Delta) u =&\left(\Lah u \cdot_{\eta}\Lah u \right) \ u - \left(u \cdot_{\eta} \Lah u \right)\ \Lah u \nonumber \\
    &+ u \times_{\eta} \Lah \left(u \times_{\eta} \Lah u \right) - u \times_{\eta} \left(u \times_{\eta} (-\Delta) u \right)\\
    & -\left(\nabla u \cdot_{\eta} \nabla u\right) u.
\end{align*}

Additionally, we have the relation 
\begin{align*}
    &\left((\Lah u \cdot_{\eta}\Lah u ) \ u - \left(u \cdot_{\eta} \Lah u \right)\ \Lah u \right) \cdot_{\eta} u\\
    &\quad= - \left(\Lah u \cdot_{\eta} \Lah u \right) - \left(u \cdot_{\eta} \Lah u \right)^2 \\
    &\quad=\text{\footnotemark} - \|\Lah u \times_{\eta} u\|_{\eta}^2 \\
    &\quad= -(\partial_t u \cdot_{\eta} \partial_t u).
\end{align*}
\footnotetext{We write $\| x \|_{\eta}^2 := x \cdot_{\eta}x$.} 

Hence, we can deduce
\begin{equation}
    \label{eq:half-h2}
    \begin{split}
    \Box u = (\partial^2_t - \Delta) u =&\left(\Lah u \cdot_{\eta}\Lah u \right) \ u - \left(u \cdot_{\eta} \Lah u \right)\ \Lah u \nonumber \\
    &+ u \times_{\eta} \Lah \left(u \times_{\eta} \Lah u \right) - u \times_{\eta} \left(u \times_{\eta} (-\Delta) u \right) \nonumber \\
    & -\left(\nabla u \cdot_{\eta} \nabla u\right) u  \nonumber \\
    =&(\partial_t u \cdot_{\eta} \partial_t u - \nabla u \cdot_{\eta} \nabla u) u \nonumber \\
    &+\Pi_{u_{\perp}}\left[\ (u \cdot_{\eta} \Lah u )\ \Lah u \right]  \nonumber \\
    &+\Pi_{u_{\perp}} \left[u \times_{\eta} \Lah \left(u \times_{\eta} \Lah u \right) - u \times_{\eta} \left(u \times_{\eta} (-\Delta) u \right)\right] 
    \end{split}
\end{equation}
where $\Pi_{u_{\perp}}$ is the projection onto the orthonormal complement of $u$. Note here that $\nabla$ only represents the spatial derivatives. So we reformulate the half-wave map as a wave-type equation with two additional non-local terms. In the following sections, readers will see that these two additional non-local terms formally behave like $u \nabla u \cdot \nabla u$.
\section{Technical Preliminaries}
In this section, we introduce the function spaces and technical tools we are going to use.
Let $P_k$, $k\in \mathbb{Z}$, be standard Littlewood-Paley multipliers on $\R^n$, and $Q_j$, $j\in \Z$ be the multipliers which localize a space-time function $f(t, x)$ to dyadic distance $\sim 2^j$ from the light cone $\big|\tau\big| = \big|\xi\big|$ on the frequency domain. \par
For a smooth cutoff function $\chi\in C^\infty_0(\R_+)$ s.t.
\[
\sum_{k\in Z}\chi(\frac{x}{2^k}) = 1\,\forall x\in \R_+, 
\]
we set 
\[ \widehat{P_k f}(\xi) = \chi \Big(\frac{|\xi|}{2^k}\Big)\hat{f}(\xi),
\]
\[
\widehat{Q_j f}(\tau, \xi) = \chi\big(\frac{\big||\tau| - |\xi|\big|}{2^j}\big)\hat{f}(\tau, \xi). 
\]
where $\hat{f}(\tau, \xi):=\int_{\R^{n+1}} e^{-i(t\tau+x\cdot\xi)} f(t,x) \ dtdx$ is the space-time Fourier transform of $f$.

With these definitions, we define the homogenous hyperbolic sobolev space $X^{s,\theta}$, $s\geq \frac{n}{2}$ (see \cite{KM}, \cite{tao2006nonlinear}, \cite{d2005wave}) as 
\[
   \dot{X}^{s,\theta}(\R^{n+1}):=\big\{ u\in \mathcal{S}'(\R^{n+1}); \ \|u\|_{\dot{X}^{s,\theta}(\R^{n+1})}:= \sum_{j\in \Z} 2^{j\theta} \| \nabla_x^s Q_j u\|_{L_{t,x}^2(\R^{n+1})} < \infty \big\} ,
\]
where we choose $\theta$ such that 
$$s-\frac{n}{2}=\theta-\frac{1}{2}.$$
Furthermore, for the critical space, when $s=\frac{n}{2}$, we introduce the additional Besov type structure to the hyperbolic Sobolev space. We set
\begin{equation*}
    \begin{split}
        &\big\|u\big\|_{\dot{X}^{\frac{n}{2},\frac12,\infty}(\R^{n+1})}: = \sup_{j\in Z}2^{\frac{j}{2}}\big\|\nabla_x^{\frac{n}{2}}Q_j u\big\|_{L_{t,x}^2(\R^{n+1})}, \\
        &\big\|F\big\|_{\dot{X}^{\frac{n}{2}-1, -\frac12, 1}(\R^{n+1})}: = \sum_{j\in Z}2^{-\frac{j}{2}}\big\|\nabla_x^{\frac{n}{2}-1}Q_j F\big\|_{L_{t,x}^2(\R^{n+1})}.
    \end{split}
\end{equation*}

We also have the following bound for the localization in time operation on $X^{s,\theta}$ spaces(see \cite{d2005wave},\cite{tao2006nonlinear}, \cite{Burzio278468}).  \par 
\begin{Lemma}
   For $s\in \R$, $\theta \geq \frac{1}{2}$. Let $\varphi: \R \rightarrow \R_{+} \in C_0^\infty(\R)$ such that 
\begin{equation*}
    \varphi(t)=\begin{cases}
          1 \ \text{for} \ |t|\leq 1,\\
          0 \ \text{for} \ |t| \geq 2.
    \end{cases}
\end{equation*}
For a given $T>0$, we define a scaling function 
$$
\varphi_T:=\varphi \big(\frac{t}{T} \big),
$$ 
then we have 
\begin{equation}
    \label{eq:loc_xsb_es}
    \| \varphi_T u\|_{\dot{X}^{s,\theta}(\R^{n+1})} \lesssim \max\{1, T^{\frac{1}{2}-\theta}\} \ \| u\|_{\dot{X}^{s,\theta}(\R^{n+1})}.
\end{equation}
\end{Lemma}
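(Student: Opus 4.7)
The plan is to pass to the time-frequency side via Plancherel, where multiplication by $\varphi_T$ becomes convolution in $\tau$ with $\hat{\varphi_T}(\tau)=T\hat{\varphi}(T\tau)$, a Schwartz function localized to the scale $|\tau|\lesssim 1/T$. Since $\nabla_x^s$ commutes with both $\varphi_T$ (which acts in $t$) and the modulation cutoffs $Q_j$, I may absorb $\nabla_x^s$ into $u$ and reduce to the case $s=0$. By subadditivity of the $\ell^1$ sum defining the norm, it then suffices to fix a single modulation block $v:=Q_{j_1}u$ and establish, uniformly in $j_1\in\Z$, the off-diagonal estimate
\[
\sum_{j_2 \in \Z} 2^{j_2\theta}\,\|Q_{j_2}(\varphi_T v)\|_{L^2_{t,x}} \;\lesssim\; \max\{1, T^{1/2-\theta}\}\cdot 2^{j_1\theta}\|v\|_{L^2_{t,x}}.
\]
Summing this in $j_1$ then recovers the claim via the triangle inequality.

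The heart of the argument is a three-pronged estimate on each off-diagonal block $\|Q_{j_2}(\varphi_T Q_{j_1}u)\|_{L^2}$, obtained by controlling the $\tau$-convolution kernel $\hat{\varphi_T}$ at the appropriate scale. First, $\|\hat{\varphi_T}\|_{L^1_\tau}=\|\hat{\varphi}\|_{L^1}$ is scale-invariant, and Young's inequality in $\tau$ combined with the $L^2$-boundedness of $Q_{j_2}$ gives the trivial bound $\|Q_{j_2}(\varphi_T v)\|_{L^2}\lesssim\|v\|_{L^2}$. Second, $\|\hat{\varphi_T}\|_{L^2_\tau}\sim T^{1/2}$, which combined with the $\tau$-shell of Lebesgue measure $\sim 2^{j_2}$ picked out by $Q_{j_2}$ yields the improvement $\|Q_{j_2}(\varphi_T v)\|_{L^2}\lesssim(T\cdot 2^{j_2})^{1/2}\|v\|_{L^2}$. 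Third, whenever $|j_2-j_1|\geq 3$ the Fourier supports of $Q_{j_2}(\varphi_T v)$ and $v$ are $\tau$-separated by $\sim 2^{\max(j_1,j_2)}$, so the Schwartz decay of $\hat{\varphi}$ furnishes an arbitrary polynomial factor $(1+T\cdot 2^{\max(j_1,j_2)})^{-N}$ that overwhelms any weight in $j_2$. With these three ingredients in hand, I would split the $j_2$-sum into (a) $j_2\leq j_1$, in which the trivial bound combined with the geometric sum $\sum 2^{j_2\theta}$ yields $2^{j_1\theta}\|v\|_{L^2}$; (b) $j_1<j_2\leq\log_2(1/T)$, nontrivial only when $T<1$ and controlled by the second bound; and (c) $j_2$ above both $j_1$ and $\log_2(1/T)$, controlled by the Schwartz decay with $N$ chosen large enough that $N>\theta+1/2$ absorbs the $2^{j_2\theta}$ weight.

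The main obstacle is the accounting in region (b), where the factor $T^{1/2-\theta}$ must be produced with the correct sign. In this range the geometric series $T^{1/2}\sum_{j_2} 2^{j_2(\theta+1/2)}$ is dominated by its upper endpoint $2^{j_2}\sim 1/T$, contributing $T^{1/2}\cdot T^{-(\theta+1/2)}=T^{-\theta}$, which matches the product $T^{1/2-\theta}\cdot 2^{j_1\theta}$ on the right-hand side once region (a) is folded in. The hypothesis $\theta\geq 1/2$ selects the regime in which the max equals $T^{1/2-\theta}$ for small $T$ and equals $1$ for large $T$; in the complementary regime $T\geq 1$ the middle region is empty, the Schwartz decay renders region (c) negligible, and the lemma collapses to the boundedness of multiplication by the cutoff on the Besov-type norm via the trivial bound.
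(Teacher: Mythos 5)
The paper itself does not prove this lemma: it is stated with citations to the references \cite{d2005wave}, \cite{tao2006nonlinear}, \cite{Burzio278468}, so there is no in-paper argument to compare against. Your overall strategy — pass to the time-frequency side, reduce to a single modulation block $v = Q_{j_1}u$, and derive off-diagonal decay for $\|Q_{j_2}(\varphi_T Q_{j_1} u)\|_{L^2}$ from $L^1$, $L^2$ and Schwartz-tail bounds on the convolution kernel $\hat{\varphi}_T$ — is the standard way to attack this, and the reduction to $s=0$ and the $\ell^1$ triangle inequality over $j_1$ are fine.

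However, the accounting in region (b) does not close as you have written it, and this is a genuine gap rather than a detail. Your bound (b) asserts $\|Q_{j_2}(\varphi_T v)\|_{L^2}\lesssim (T 2^{j_2})^{1/2}\|v\|_{L^2}$, obtained from $\|\hat{\varphi}_T\|_{L^2_\tau}\sim T^{1/2}$ and the $\tau$-shell of measure $\sim 2^{j_2}$. Summing this over $j_1 < j_2 \le \log_2(1/T)$ produces $T^{1/2}\sum 2^{j_2(\theta+1/2)}\sim T^{-\theta}\,\|v\|_{L^2}$, and you need this to be $\lesssim T^{1/2-\theta}\,2^{j_1\theta}\,\|v\|_{L^2}$. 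That requires $T^{-1/2}\lesssim 2^{j_1\theta}$, i.e.\ $2^{j_1}\gtrsim T^{-1/(2\theta)}$, which is not implied by the standing constraint $2^{j_1} < 1/T$ on region (b); the clause ``once region (a) is folded in'' does not supply the missing factor, since adding $2^{j_1\theta}\|v\|$ (region (a)) to $T^{-\theta}\|v\|$ (region (b)) does not produce their product. You can sharpen (b) by also using that $\hat{v}(\cdot,\xi)$ is supported on a set of $\tau'$-measure $\sim 2^{j_1}$: combining $\|\hat{\varphi}_T\|_{L^\infty_\tau}\sim T$ with Cauchy--Schwarz on the source shell gives $\|Q_{j_2}(\varphi_T v)\|_{L^2}\lesssim T\,2^{(j_1+j_2)/2}\|v\|_{L^2}$, and this bound is actually attained (take $\hat{u}(\tau,\xi)=\chi(2^{-j_1}(\tau-|\xi|))\chi(\xi/R)$, so that on the intermediate shells $|\hat{\varphi}_T*\hat{u}|\sim T\,2^{j_1}$). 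But even with the sharp bound, region (b) contributes $T\,2^{j_1/2}\sum_{j_1<j_2\le \log_2(1/T)}2^{j_2(\theta+1/2)}\sim T^{1/2-\theta}\,2^{j_1/2}\|v\|_{L^2}$, and this dominates the target $T^{1/2-\theta}\,2^{j_1\theta}\|v\|_{L^2}$ whenever $\theta>1/2$ and $j_1<0$. So your decomposition proves the claim for $j_1\ge 0$ (or $\theta=1/2$), but for low-modulation blocks of $u$ the estimate you wrote down does not hold, and no choice of $N$ in region (c) helps since the loss arises at modulations below $1/T$ where $\hat{\varphi}_T$ has no decay. You need either a different bound for $j_1<0$ that you have not supplied, or an extra hypothesis restricting the modulation content of $u$, before the argument can be considered complete.
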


Given the wave admissible condition $$\frac{1}{p} + \frac{n-1}{2q}\leq \frac{n-1}{4}, \ 2\leq p,q\leq \infty,\ \text{for } n\geq 4,$$
we define our solution spaces with the classical Strichartz norms. For the global critical case, we set
\begin{equation}\label{eq:Snorm}
    \begin{split}
        \big\|u\big\|_{S}&: = \sum_{k\in \Z}\sup_{(p,q)\,\text{admissible}}2^{(\frac1p + \frac{n}{q}-1)k}\big\|\nabla_{t,x}P_ku\big\|_{L_t^p L_x^q(\R^{n+1})} \\
        &\quad + \big\|\nabla_{t,x}P_k u\big\|_{\dot{X}^{\frac{n}{2}-1,\frac12,\infty}(\R^{n+1})}\\
        &=:\sum_{k\in \Z}\big\|P_k u\big\|_{S_k}.
    \end{split}
\end{equation} \par
For the local sub-critical case, when $s>\frac{n}{2}$, we set
\begin{equation}\label{eq:Ssnorm}
    \begin{split}
        \big\|u\big\|_{S_{\text{loc}}}&:= \sum_{k\in \Z}\sup_{(p,q)\,\text{admissible}}2^{(s-\frac{n}{2}+\frac1p + \frac{n}{q}-1)k}\big\|\nabla_{t,x}P_ku\big\|_{L_t^p L_x^q([0,T] \times \R^n)} \\
        &\quad + \big\|\nabla_{t,x}\varphi_T P_k u\big\|_{\dot{X}^{s-1,\theta}(\R^{n+1})}\\ 
    &=:\sum_{k\in \Z}\big\|P_k u\big\|_{S_{\text{loc}}}.
    \end{split}
    \end{equation}

Further, we define the spaces to control the nonlinearity respectably as
\begin{equation}\label{eq:Nnorm}
\big\|F\big\|_{N}: = \sum_{k\in \Z}\big\|P_kF\big\|_{L_t^1\dot{H}_x^{\frac{n}{2}-1} + \dot{X}^{\frac{n}{2}-1,-\frac12,1}(\R^{n+1})}
\end{equation}
\begin{equation}\label{eq:Nsnorm}
    \big\|F\big\|_{N_{\text{loc}}}: = \sum_{k\in \Z}\big\|\varphi_T P_kF\big\|_{L_t^1\dot{H}_x^{s-1} + \dot{X}^{s-1,\theta-1}(\R^{n+1}) }
    \end{equation}
From the works of wave maps \cite{Kri}, \cite{ 2000mathTao}, \cite{Tat3}, we have the Strichartz estimate,
\begin{Proposition}
    For $s \geq \frac{n}{2}$, we have
    \label{prop:linestimates}
    \begin{equation}\label{eq:energyinequality}
        \big\| u\big\|_S\lesssim \big\|u[0]\big\|_{\dot{H}^{s}\times \dot{H}^{s-1}} + \big\|\Box u\big\|_{N}. 
        \end{equation}
\end{Proposition}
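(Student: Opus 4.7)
The plan is to reduce the inequality to frequency-localized estimates and then piece together the standard homogeneous and inhomogeneous Strichartz bounds with the $\dot X^{s,\theta}$ machinery. First I would apply the Littlewood--Paley decomposition $u=\sum_k P_k u$: since both sides of \eqref{eq:energyinequality} are defined as $\ell^1$-sums over $k\in\Z$, and since $\Box$ commutes with $P_k$, it suffices to prove the corresponding frequency-localized estimate
\[
\|P_k u\|_{S_k}\;\lesssim\; \|P_k u[0]\|_{\dot H^{s}\times\dot H^{s-1}} + \|P_k\Box u\|_{L^1_t\dot H^{s-1}_x + \dot X^{s-1,-\frac12,1}(\R^{n+1})}
\]
with constants uniform in $k$. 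By scaling (or by a rescaling argument in $k$), I can reduce to $k=0$.

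Next I would write $P_k u = u^{\mathrm{hom}} + u^{\mathrm{inh}}$ where $u^{\mathrm{hom}}$ solves $\Box u^{\mathrm{hom}}=0$ with initial data $P_k u[0]$ and $u^{\mathrm{inh}}$ is given by the Duhamel formula with forcing $P_k\Box u$. For $u^{\mathrm{hom}}$, the $L^p_t L^q_x$ piece of $\|\cdot\|_{S_k}$ follows from the classical Keel--Tao Strichartz estimates for the free wave equation on wave-admissible pairs $(p,q)$ in dimension $n\ge 4$, while the $\dot X^{s-1,\frac12,\infty}$ component is trivially controlled by the energy of the data because for homogeneous solutions the Fourier transform is supported on the light cone, so $Q_j u^{\mathrm{hom}}$ is supported in $\{||\tau|-|\xi||\lesssim 2^j\}$ and decays suitably by the finite-speed-of-propagation/modulation estimates; the energy inequality gives the $L^\infty_t\dot H^s$ bound which I can upgrade via the standard $X^{s,\theta}$ embedding.

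For the inhomogeneous piece, I would split the forcing according to the two components of $N$. If $P_k\Box u\in L^1_t\dot H^{s-1}_x$, Minkowski plus the homogeneous estimate (applied at each time slice) yields the $L^p_t L^q_x$ and energy bounds, and a standard argument gives the $\dot X^{s-1,\frac12,\infty}$ control. If instead the forcing lies in $\dot X^{s-1,-\frac12,1}$, I would use the duality between $\dot X^{s,\frac12,\infty}$ and $\dot X^{-s,-\frac12,1}$ together with the inversion formula for $\Box$ in frequency space: on the transverse region $Q_j$ with $j\ne 0$ the multiplier $\tfrac{1}{\tau^2-|\xi|^2}$ behaves like $2^{-j}|\xi|^{-1}$, which exactly converts a $2^{-j/2}$ weight into a $2^{j/2}$ weight, producing the desired $\ell^\infty_j$ vs.~$\ell^1_j$ duality balance. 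The tangential region $j\le 0$ is handled by absorbing it into the energy/Strichartz piece via a standard cutoff trick (see Tataru \cite{Tat3} or Tao \cite{2000mathTao}).

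The main obstacle is the compatibility between the $\ell^\infty_j$ structure in the $S$-norm and the $\ell^1_j$ structure in the $N$-norm at the frequency-localized level; this is what forces the Besov-refined $\dot X^{s,\frac12,\infty}$ / $\dot X^{s-1,-\frac12,1}$ pair at the critical exponent $s=n/2$, and it requires that the inversion of $\Box$ on a single modulation shell $Q_j$ be performed carefully so that one genuinely gains a factor $2^{-j}$ at each step. Once this is in place, summing the frequency-localized estimates in $\ell^1_k$ produces the assertion of the proposition with no further work; the subcritical case $s>n/2$ is simpler since one may use an $\ell^2_j$ (or even $\ell^\infty_j$ with $\theta>\tfrac12$) structure and the bound \eqref{eq:loc_xsb_es} absorbs the modulation-regularity mismatch.
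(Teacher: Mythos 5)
The paper does not actually prove this proposition; it is quoted directly from the wave-maps literature (\cite{Kri}, \cite{2000mathTao}, \cite{Tat3}), so there is no internal argument to compare against. Your sketch identifies the correct overall architecture --- Littlewood--Paley localization and rescaling to $k=0$, the homogeneous/Duhamel split, Keel--Tao Strichartz estimates for the $L^p_t L^q_x$ pieces, and duality plus $\Box^{-1}$-inversion on modulation shells for the $\dot X^{s,b}$ pair --- and this is indeed the route taken in the cited references.

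However, your treatment of the homogeneous piece in $\dot X^{s-1,\frac12,\infty}$ contains a genuine error. A free wave $u^{\mathrm{hom}}$ on $\R^{n+1}$ is not in $L^2_{t,x}(\R^{n+1})$: its space-time Fourier transform is a measure supported \emph{exactly} on the light cone, and the modulation cutoffs $Q_j$ (which localize to $\big||\tau|-|\xi|\big| \sim 2^j$, not $\lesssim 2^j$ as you write) annihilate it for every $j$. The claim that $Q_j u^{\mathrm{hom}}$ ``is supported in $\{||\tau|-|\xi||\lesssim 2^j\}$ and decays suitably'' therefore has no content, and the decomposition $u^{\mathrm{hom}}=\sum_j Q_j u^{\mathrm{hom}}$ simply fails. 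The correct argument truncates in time first and bounds $\|\varphi_T u^{\mathrm{hom}}\|_{\dot X^{s,\frac12,\infty}}$ directly, using that the temporal Fourier transform of $\varphi(t)e^{\pm it|\xi|}$ is rapidly decaying in $\tau\mp|\xi|$; this is the standard cutoff lemma appearing in \cite{Tat3} and \cite{tao2006nonlinear}. Relatedly, the proposed ``upgrade via the standard $X^{s,\theta}$ embedding'' is backwards: the embedding runs $\dot X^{s,b}\hookrightarrow L^\infty_t\dot H^s_x$ for $b>\tfrac12$, not the converse, so the energy bound cannot be promoted to an $\dot X^{s,\frac12,\infty}$ bound in the way you describe. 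The rest of your plan (the $\ell^\infty_j$ versus $\ell^1_j$ duality and the $\Box^{-1}$ gain of a factor $2^{-j}|\xi|^{-1}$ on a single modulation shell away from the cone) is the right picture, though the low-modulation tail needs the same cutoff mechanism rather than being ``absorbed by a trick.''
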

Additionally, we introduce the following useful multilinear lemma to estimate the difference between non-local terms.

\begin{Lemma}[Lemma 3.1 in \cite{krieger2017small}]
    \label{lem:multilinestimates} For the following bilinear expression (where $\chi_{k_j}(\cdot)$ is the smooth cutoff to the annulus $|\xi|\sim 2^{k_j}$)
    \[
    F(u, v)(x): = \int_{\R^{n}}\int_{\R^{n}}m(\xi, \eta)e^{ix\cdot(\xi+\eta)}\chi_{k_1}(\xi)\hat{u}(\xi)\chi_{k_2}(\eta)\hat{v}(\eta)d\xi d\eta
    \]
    where the multiplier $m(\xi, \eta)$ is $C^\infty$ with respect to the coordinates on the support of $\chi_{k_1}(\xi)\cdot \chi_{k_2}(\eta)$, and satisfies point-wise bounds 
    \[
    \big|m(\xi, \eta)\big|\leq \gamma\lesssim 1,\, \big|(2^{k_1}\nabla_{\xi})^i(2^{k_2}\nabla)^jm(\xi, \eta)\big|\lesssim_{i,j} 1,\,\forall i,,j.  
    \]
    Then if $\big\|\cdot\big\|_{Z}, \big\|\cdot\big\|_{Y}, \big\|\cdot\big\|_{X}$ are translation invariant norms with the property that 
    \[
    \big\|u\cdot v\big\|_{Z}\leq \big\|u\big\|_{X}\cdot\big\|v\big\|_{Y}, 
    \]
    it follows that 
    \[
    \big\|F(u, v)\big\|_{Z}\lesssim\gamma^{(1-)}\big\|P_{k_1}u\big\|_{X}\big\|P_{k_2}v\big\|_{Y},
    \]
    where the implied constant only depends on the size of finitely many derivatives of $m$. 
    \end{Lemma}

    In particular, for $k_1,k_2 \lesssim 1$, we have
    \begin{equation}\label{singlefreq}
    \big\|P_0 \left( \Lah (u_{k_1}\cdot u_{k_2} ) - u_{k_1}\cdot  (-\triangle^{\frac12}u_{k_2})\right) \big\|_{Z}\lesssim2^{k_1} \big\| u_{k_1}\big\|_{X}\big\|u_{k_2}\big\|_{Y},
    \end{equation}
    and 
    \begin{equation}\label{singlefreq2}
    \big\|P_0\left(\Lah (u_{k_1}\cdot \Lah u)-u_{k_1}\cdot (\triangle u_{k_2}) \right)\big\|_{Z}\lesssim 2^{k_1+k_2} \big\| u_{k_1}\big\|_{X}\big\|u_{k_2}\big\|_{Y},
    \end{equation}
    by using the multipliers 
    $$m(\xi,\eta)=\chi_0(\xi +\eta)(|\xi +\eta|-|\eta|) \quad\text{and} \quad m(\xi,\eta)=\chi_0(\xi +\eta)|\eta|(|\xi +\eta|-|\eta|) \ \text{respectably}.$$ \par

    We will also take advantage of the geometric property of $S^2$. Since $u\cdot u = 1$, by the Littlewood-Paley decomposition, we deduce 
    \begin{equation}\label{eq:orthomicro}
    0 = u\cdot u - Q\cdot Q = \sum_{|k_1-k_2|\leq 10} u_{k_1}u_{k_2} + 2\sum_{k_1}u_{k_1}\cdot u_{<k_1-10}
    \end{equation}
    This implies that 
    \begin{equation}\label{eq:cancel}
    \sum_{|k_1-k_2|<10} (-\triangle)^{\frac{1}{2}}\big(u_{k_1}u_{k_2}\big) =- 2\sum_{k_1}(-\triangle)^{\frac{1}{2}}\big(u_{k_1}\cdot u_{<k_1-10}\big)
    \end{equation}

\section{Local Well-Posedness With $S^2$ target}
\label{sec:local}
We show that the sub-critical Cauchy problem of \eqref{eq:halfwm-wave} is locally well-posed for $H^s \times H^{s-1}(\R^n)$, $s>\frac{n}{2}$, initial data on the domain $[0,T] \times \R^n$ for $n \geq 5$. The local and global well-posedness problems of the sub-critical wave maps have been studied by Klainerman-Machedon \cite{klainerman1996estimates} for $n \geq 4$, Klainerman-Selberg \cite{klainerman1997remark} for $n =2,3$, and Keel-Tao \cite{KeelMarcus1998Lagw} for $n=1$. \par

Accounting on the higher regularity of initial data, we first prove a multilinear estimate proposition for the non-local terms of the \eqref{eq:halfwm-wave}. Then we use an iterative argument and the fixed point theorem to prove the local well-posedness of the sub-critical problem. In particular, we focus on the case when $s$ is close to $\frac{n}{2}$ i.e. $ 0<s-\frac{n}{2} \ll 1 $. For larger $s$, the result follows by the persistence of higher regularity argument.
\par
We first build the multilinear estimates:
\begin{Proposition}
    \label{prop:locmultilinear_s2}
    Let $n \geq 5$, $u: [0,T]\times \R^{n} \rightarrow S^2$, such that $\lim_{|x|\rightarrow \infty} \ u(t,x) =Q$ for a fixed $Q\in S^2$, for all t. For some $\beta, \sigma>0$, we have the following estimates.
    \begin{equation}
        \label{eq:locmultilin1_s2}
        \begin{split}
            &\big\|P_k \big[u( \partial_t u \cdot \partial_t u -\nabla u \cdot \nabla u )\big]\big\|_{N_{loc}} \\
    &\lesssim T^{\beta} (1+\big\|u\big\|_{S_{loc}})\big\|u\big\|_{S_{loc}}\big(\sum_{k_1\in Z}2^{-\sigma|k-k_1|}\big\|P_{k_1}u\big\|_{S_{loc}}\big).
        \end{split}
    \end{equation}

    Furthermore, for $\tilde{u}$ maps into a small neighborhood of $S^2$, we also have 
    \begin{equation}\label{eq:locmultilin2_s2}
        \begin{split}
            &\big\|P_k\big(\Pi_{\tilde{u}}\big((-\triangle)^{\frac{1}{2}}u\big)(u\cdot (-\triangle)^{\frac{1}{2}}u)\big)\big\|_{N_{loc}}\\
    &\lesssim T^{\beta}\prod_{v = u,\tilde{u}} (1+\big\|v\big\|_{S_{loc}})\big\|u\big\|_{S_{loc}}\big(\sum_{k_1\in Z}2^{-\sigma|k-k_1|}\big\|P_{k_1}u\big\|_{S_{loc}}\big)
        \end{split} 
    \end{equation}

    and

    \begin{equation}\label{eq:locmultilin3_s2}\begin{split}
    &\big\|P_k\big(\Pi_{\tilde{u}}\big[u \times (-\triangle)^{\frac{1}{2}}(u \times (-\triangle)^{\frac{1}{2}}u) - u \times(u \times (-\triangle)u)\big]\big)\big\|_{N_{loc}}\\
    &\lesssim T^{\beta}\prod_{v = u,\tilde{u}} (1+\big\|v\big\|_{S_{loc}})\big\|u\big\|_{S_{loc}}\big(\sum_{k_1\in Z}2^{-\sigma|k-k_1|}\big\|P_{k_1}u\big\|_{S_{loc}}\big).
    \end{split}\end{equation}

    Using the notation 
    \[
    \triangle_{1,2}F^{(j)} := F^{(1)} - F^{(2)},
    \]

    we have the difference estimates: 
    \begin{equation}\label{eq:locmultilin4_s2}\begin{split}
    &\big\|\triangle_{1,2}P_k\big[u^{(j)}(\nabla u^{(j)}\cdot\nabla u^{(j)} - \partial_t u^{(j)}\cdot\partial_t u^{(j)})\big]\big\|_{N_{loc}}\\&\lesssim T^{\beta}(1+\max_j\big\|u^{(j)}\big\|_{S_{loc}})(\max_j\big\|u^{(j)}\big\|_{S_{loc}})\big(\sum_{k_1\in Z}2^{-\sigma|k-k_1|}\big\|P_{k_1}u^{(1)} - P_ku^{(2)}\big\|_{S_{loc}}\big)\\
    & +T^{\beta} (1+\max_j\big\|u^{(j)}\big\|_{S_{loc}})(\big\|u^{(1)} - u^{(2)}\big\|_{S_{loc}})\big(\max_j\sum_{k_1\in Z}2^{-\sigma|k-k_1|}\big\|P_{k_1}u^{(j)}\big\|_{S_{loc}}\big),\\
    \end{split}\end{equation}

    and the two similarly estimates

    \begin{equation}\label{eq:locmultilin5_s2}\begin{split}
    &\big\|P_k\triangle_{1,2}\big(\Pi_{\tilde{u}^{(j)}_{}}\big((-\triangle)^{\frac{1}{2}}u^{(j)}\big)(u^{(j)}\cdot (-\triangle)^{\frac{1}{2}}u^{(j)})\big)\big\|_{N_{loc}}\\
    &\lesssim T^{\beta}\max_j\prod_{v = u^{(j)},\tilde{u}^{(j)}} (1+\big\|v\big\|_{S_{loc}})\big\|u^{(j)}\big\|_{S_{loc}}\big(\sum_{k_1\in Z}2^{-\sigma|k-k_1|}\big\|P_{k_1}u^{(1)} - P_{k_2}u^{(2)} \big\|_{S_{loc}}\big)\\
    & + T^{\beta}\max_j\prod_{v = u^{(j)},\tilde{u}^{(j)}} (1+\big\|v\big\|_{S_{loc}})\big\|u^{(1)}- u^{(2)}\big\|_{S_{loc}}\big(\max_j\sum_{k_1\in Z}2^{-\sigma|k-k_1|}\big\|P_{k_1}u^{(j)}\big\|_{S_{loc}}\big)\\
    & +  T^{\beta} \max_j(1+\big\|u^{(j)}\big\|_{S_{loc}})\big\|\tilde{u}^{(1)}- \tilde{u}^{(2)}\big\|_{S_{loc}}\big(\max_j\sum_{k_1\in Z}2^{-\sigma|k-k_1|}\big\|P_{k_1}u^{(j)}\big\|_{S_{loc}}\big),\\
    \end{split}
\end{equation}

 \begin{equation}
        \label{eq:locmultilin6}
        \begin{split}
        &\big\|P_k \triangle_{1,2} \big(\Pi_{\tilde{u}^{(j)}}\big[u^{(j)} \times (-\triangle)^{\frac{1}{2}}(u^{(j)} \times (-\triangle)^{\frac{1}{2}}u^{(j)}) - u ^{(j)}\times(u^{(j)} \times (-\triangle)u^{(j)})\big]\big)\big\|_{N_{loc}}\\
        &\lesssim T^{\beta}\max_j\prod_{v = u^{(j)},\tilde{u}^{(j)}} (1+\big\|v\big\|_{S_{loc}})\big\|u^{(j)}\big\|_{S_{loc}}\big(\sum_{k_1\in Z}2^{-\sigma|k-k_1|}\big\|P_{k_1}u^{(1)} - P_{k_2}u^{(2)} \big\|_{S_{loc}}\big)\\
    & + T^{\beta}\max_j\prod_{v = u^{(j)},\tilde{u}^{(j)}} (1+\big\|v\big\|_{S_{loc}})\big\|u^{(1)}- u^{(2)}\big\|_{S_{loc}}\big(\max_j\sum_{k_1\in Z}2^{-\sigma|k-k_1|}\big\|P_{k_1}u^{(j)}\big\|_{S_{loc}}\big)\\
    & +  T^{\beta}\max_j(1+\big\|u^{(j)}\big\|_{S_{loc}})\big\|\tilde{u}^{(1)}- \tilde{u}^{(2)}\big\|_{S_{loc}}\big(\max_j\sum_{k_1\in Z}2^{-\sigma|k-k_1|}\big\|P_{k_1}u^{(j)}\big\|_{S_{loc}}\big).\\
        \end{split}
    \end{equation}

    \end{Proposition}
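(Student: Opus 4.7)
The strategy is to treat the six estimates in parallel: the three forward trilinear bounds share a common scheme, and the three difference bounds follow from telescoping. Throughout, I would work term-by-term after a paraproduct decomposition $u = \sum_{k_1} P_{k_1} u$, and I would use Hölder in time to extract the small factor $T^{\beta}$. This is the place where the subcritical assumption $s > \tfrac{n}{2}$ is essential: the mismatch between the scaling weight in $S_{\mathrm{loc}}$ and the critical one gives an unweighted factor of $T^{s-\frac{n}{2}}$ for each dyadic piece, which together with Schur's test yields the $2^{-\sigma|k-k_1|}$ off-diagonal decay on the right-hand sides. All Strichartz norms I need are contained in the definition of $S_{\mathrm{loc}}$ via the wave-admissibility condition, and for $n\geq 5$ the admissible pair $L^2_tL^4_x$ (combined with $L^{\infty}_t L^2_x$ and an $L^2_t L^{2(n-1)/(n-3)}_x$ endpoint) gives enough integrability to close each Hölder product.

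For the null-form-type trilinear expression $u(\partial_t u\cdot \partial_t u - \nabla u\cdot \nabla u)$ in \eqref{eq:locmultilin1_s2}, I would split into the high-high and high-low regimes of the two gradient factors. In the high-low regime the paraproduct is a standard Coifman–Meyer bilinear multiplier and one closes by Strichartz; in the high-high regime an additional sum in the output frequency $k$ is exchanged via Schur for the $2^{-\sigma|k-k_1|}$ decay, using that $s>\tfrac{n}{2}$ so that the highest frequency carries positive weight. The outer factor $u$ only costs an $L^{\infty}_{t,x}$ bound that follows from the Strichartz-controlled inclusion $S_{\mathrm{loc}}\hookrightarrow L^{\infty}_tL^{\infty}_x$ for $s>\tfrac{n}{2}$, producing the leading factor $(1+\|u\|_{S_{\mathrm{loc}}})$.

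For the two nonlocal estimates \eqref{eq:locmultilin2_s2} and \eqref{eq:locmultilin3_s2} the main idea is to reduce the $\Lah$-twisted expressions to local null-form-type expressions plus a controlled commutator remainder. Concretely, for \eqref{eq:locmultilin3_s2} I would rewrite
\begin{equation*}
\Lah\bigl(u_{k_1}\times \Lah u_{k_2}\bigr) - u_{k_1}\times(-\triangle)u_{k_2}
= \bigl[\Lah, u_{k_1}\times\bigr]\Lah u_{k_2}
\end{equation*}
and apply the multiplier version of the commutator bound \eqref{singlefreq2} (with symbol $\chi_0(\xi+\eta)|\eta|(|\xi+\eta|-|\eta|)$) to gain the factor $2^{k_1+k_2}$, which turns the nonlocal piece into a genuine $u\,\nabla u\cdot\nabla u$ object. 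An analogous application of \eqref{singlefreq} handles \eqref{eq:locmultilin2_s2}. After this reduction the estimates reduce to the same Strichartz bookkeeping as in \eqref{eq:locmultilin1_s2}, with an extra harmless factor $(1+\|\tilde u\|_{S_{\mathrm{loc}}})$ coming from expanding $\Pi_{\tilde u}$ as a polynomial in $\tilde u$ and $\tilde u\cdot$. At high-high output frequencies I would additionally invoke the cancellation identity \eqref{eq:cancel} coming from $u\cdot u=1$ to prevent the sum from diverging; this is where the $S^{2}$ geometry is used.

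The difference estimates \eqref{eq:locmultilin4_s2}–\eqref{eq:locmultilin6} follow from the standard multilinear telescoping
\begin{equation*}
A^{(1)}B^{(1)}C^{(1)}-A^{(2)}B^{(2)}C^{(2)} = (A^{(1)}-A^{(2)})B^{(1)}C^{(1)} + A^{(2)}(B^{(1)}-B^{(2)})C^{(1)} + A^{(2)}B^{(2)}(C^{(1)}-C^{(2)})
\end{equation*}
applied to the frequency-localized pieces, after which each term is estimated exactly as in the forward bounds; the only new ingredient is that differences of commutators are themselves of commutator form, so \eqref{singlefreq}–\eqref{singlefreq2} apply to them as well. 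I expect the main obstacle to be the high-high contribution to the nonlocal terms \eqref{eq:locmultilin3_s2} and \eqref{eq:locmultilin6}: there the naïve bookkeeping loses summability in the output frequency, and one must combine the commutator reduction with the $S^2$ cancellation \eqref{eq:cancel} and a careful Schur argument to recover the off-diagonal decay $2^{-\sigma|k-k_1|}$ on the right-hand side.
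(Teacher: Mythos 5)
Your high-level plan overlaps with the paper's in several respects (Littlewood--Paley/paraproduct decomposition of the bilinear null form, the commutator Lemma~\ref{lem:multilinestimates} to turn the nonlocal $\Lah$-terms into $u\nabla u\cdot\nabla u$-type expressions with a $2^{k_1+k_2}$ gain, the $S^2$ cancellation $u\cdot u=1$ via \eqref{eq:orthomicro}, a projector lemma to dispose of $\Pi_{\tilde u^\perp}$, and telescoping for the Lipschitz bounds). However, there are two concrete problems.

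First, your account of where $T^{\beta}$ comes from is wrong. You attribute it to ``the mismatch between the scaling weight in $S_{\mathrm{loc}}$ and the critical one'' giving a factor $T^{s-n/2}$; in fact the subcritical weight $2^{(s-\frac{n}{2})k}$ produces the exponential \emph{frequency} decay $2^{-\sigma|k-k_1|}$, not any power of $T$. The actual source of $T^{\beta}$ in the paper is H\"older in time on $[0,T]$ applied to a Strichartz exponent that has been slightly lowered: $\|f\|_{L_t^2([0,T])}\leq T^{\frac{1}{2}-\frac{1}{2+\gamma}}\|f\|_{L_t^{2+\gamma}}$, with $(2+\gamma,q)$ still wave-admissible. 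In the $X^{s,\theta}$ pieces there is moreover a small $T$-\emph{loss} $T^{\frac{1}{2}-\theta}$ from the cutoff lemma \eqref{eq:loc_xsb_es} which has to be dominated by the H\"older gain, forcing the constraint $s-\frac{n}{2}\le\frac{\gamma}{4(2+\gamma)}$ that the paper records. None of this is visible in your sketch.

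Second, and more seriously, you claim the high-low / low-high interaction ``closes by Strichartz'' and you never invoke the modulation decomposition or the $\dot{X}^{s,\theta}$ component of $S_{\mathrm{loc}}$. This does not close. In the delicate regime where the output frequency sits at the high input frequency ($k_2=k+O(1)$, $k_1\ll k$) and all modulations are small, the only way to place $\nabla u_{k_2}$ in $L_t^2L_x^2$ without the $X^{s,\theta}$ structure is via H\"older in time, which costs $T^{1/2}\|\nabla u_{k_2}\|_{L_t^\infty L_x^2}$. Chasing exponents, the resulting bound on $\|P_k F\|_{N_{\mathrm{loc}}}$ carries an extra factor $2^{(\frac12-(s-\frac{n}{2}))k}$ that grows in $k$; the estimate is then false uniformly in $k$ for a fixed $T$, and the sum over $k$ diverges. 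The paper avoids this precisely by isolating the small-modulation piece $Q_{<k_1-10}u_{<k_1-10}\,\partial_\alpha Q_{<k_1+10}u_{k_1}\,\partial^\alpha Q_{<k_1-10}u_{k_2}$, applying the null-form identity
\begin{equation*}
2\,\partial_\alpha A\,\partial^\alpha B \;=\; \Box(AB) - (\Box A)B - A\,\Box B,
\end{equation*}
and then estimating the three resulting pieces in the $\dot X^{s-1,\theta-1}$ part of $N_{\mathrm{loc}}$ using the $\dot X^{s-1,\theta}$ control built into $S_{\mathrm{loc}}$ and the localization lemma \eqref{eq:loc_xsb_es}. Without this ingredient the proof of \eqref{eq:locmultilin1_s2} (and hence the whole proposition) does not go through; the same mechanism is also used inside the low-high subcases of \eqref{eq:locmultilin2_s2} and \eqref{eq:locmultilin3_s2}. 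Your proposal needs to add the modulation decomposition and the null-form/$X^{s,\theta}$ step, and to correct the bookkeeping of where the positive power of $T$ is actually produced.
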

The basic technics of the proof are analogous to the work of Krieger-Sire \cite{krieger2017small}. The key difference between our proposition and the previous work is that we gain a small $T$ factor for the estimates. This is due to the higher regularity of the initial data and the local in time settings. The benefit is that we can control the estimates by the $T$ factor instead of assuming the smallness of the initial data. The proof also shares similar technics to prove the global well-posedness result in Section \ref{sec:error}, hence we omit the detailed proof here. We refer the reader to Appendix \ref{app:proof} for detailed proof.  \par
With the multilinear estimates, we then have the local well-posedness result.
\begin{theorem}
\label{thm:local}    
 Assume the smooth initial data $u[0] \in \dot{H}^s \times \dot{H}^{s-1}(\R^n)$, for $s>\frac{n}{2}$. There exists $0<T<\infty$, depends on the $\dot{H}^s \times \dot{H}^{s-1}(\R^n)$ of the initial data $u[0]$. The Cauchy problem of \eqref{eq:halfwm-wave} admits a smooth solution on $[0,T]\times \R^n$ s.t.
$$
\|(u(t), \partial_t u(t))\|_{\dot{H}^s \times \dot{H}^{s-1}(\R^n)} \leq C \| u[0]\|_{\dot{H}^s \times \dot{H}^{s-1}(\R^n)},
$$
for all $0\leq t \leq T$.
\end{theorem}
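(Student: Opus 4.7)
The plan is to solve \eqref{eq:halfwm-wave} via Picard iteration on a closed ball in $S_{\text{loc}}$ restricted to $[0,T] \times \R^n$, with $T$ chosen small enough that the $T^{\beta}$ gain in the multilinear estimates of Proposition \ref{prop:locmultilinear_s2} absorbs the polynomial nonlinearity and yields a contraction without requiring smallness of the data. Writing $A_0 := \|u[0]\|_{\dot H^s \times \dot H^{s-1}}$ and letting $C_0$ denote the constant in the sub-critical version of Proposition \ref{prop:linestimates}, I consider the closed ball
\[
\mathcal{B} := \{\, v \;:\; v[0] = u[0],\ \|v\|_{S_{\text{loc}}} \leq 2 C_0 A_0 \,\},
\]
and, given $u^{(j)} \in \mathcal{B}$, define $u^{(j+1)}$ as the unique solution of the linear wave equation $\Box u^{(j+1)} = F(u^{(j)})$ with data $u[0]$, where $F(\cdot)$ denotes the right-hand side of \eqref{eq:halfwm-wave} with $\tilde{u} = u^{(j)}$ inside the projections $\Pi_{\tilde{u}_\perp}$.

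The a priori bound follows by combining the energy-Strichartz estimate \eqref{eq:energyinequality} with the three multilinear estimates \eqref{eq:locmultilin1_s2}--\eqref{eq:locmultilin3_s2} and summing the dyadic convolution in $k$ (where $\sum_{k}\sum_{k_1}2^{-\sigma|k-k_1|} \|P_{k_1}u\|_{S_{\text{loc}}} \lesssim \|u\|_{S_{\text{loc}}}$):
\[
\|u^{(j+1)}\|_{S_{\text{loc}}} \leq C_0 A_0 + C_1 T^{\beta} (1 + \|u^{(j)}\|_{S_{\text{loc}}})^{2} \|u^{(j)}\|_{S_{\text{loc}}}^{2}.
\]
Choosing $T = T(A_0)$ so that $C_1 T^{\beta} (1+ 2 C_0 A_0)^{2} (2 C_0 A_0)^{2} \leq C_0 A_0$ forces the map $u^{(j)} \mapsto u^{(j+1)}$ to send $\mathcal{B}$ into itself. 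For the contraction property, I apply the difference estimates \eqref{eq:locmultilin4_s2}--\eqref{eq:locmultilin6} with $(u^{(1)}, u^{(2)}, \tilde u^{(1)}, \tilde u^{(2)}) = (u^{(j)}, u^{(j-1)}, u^{(j)}, u^{(j-1)})$ together with Proposition \ref{prop:linestimates} to obtain
\[
\|u^{(j+1)} - u^{(j)}\|_{S_{\text{loc}}} \leq C_2 T^{\beta} R(A_0)\, \|u^{(j)} - u^{(j-1)}\|_{S_{\text{loc}}}
\]
for some polynomial $R$; shrinking $T$ further makes this a strict contraction on $\mathcal{B}$ and produces the fixed point, which obeys the claimed bound by construction of $\mathcal{B}$.

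The main obstacle is keeping the iterates $u^{(j)}$ inside a fixed tubular neighborhood of $S^2$ on which $\Pi_{u_\perp}$ is smooth, since this is implicit in the applicability of \eqref{eq:locmultilin2_s2}--\eqref{eq:locmultilin3_s2}. Because $s > \tfrac{n}{2}$, the Sobolev embedding $H^s \hookrightarrow L^\infty$ combined with $S_{\text{loc}}$-control of $\nabla_{t,x} u^{(j)}$ yields a pointwise bound $\|u^{(j)}(t,\cdot) - u(0,\cdot)\|_{L^\infty} \lesssim T^{\gamma}$ for some $\gamma > 0$; since $u(0,\cdot) \in S^2$, the iterates remain in an arbitrarily small neighborhood of $S^2$ for $T$ small, which also allows one to view the hypothesis $u \in S^2$ as automatically propagated at the level of the wave formulation. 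Finally, propagation of higher regularity $s' > s$ follows by running the same contraction in $S_{\text{loc}}^{s'}$ on the same interval $[0,T]$ with $T$ still depending only on $A_0$; smoothness of $u[0]$ then implies smoothness of the solution on $[0,T] \times \R^n$, completing the argument.
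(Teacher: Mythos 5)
Your plan departs from the paper's in one structural respect and has one genuine gap tied to that departure.

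\emph{Structural difference.} The paper does not use a single-layer Picard iteration with everything evaluated at the previous iterate. Instead, the iteration \eqref{eq:iterate} keeps the wave-map nonlinearity $u^{(j)}(\nabla u^{(j)}\cdot\nabla u^{(j)} - \partial_t u^{(j)}\cdot\partial_t u^{(j)})$ at the \emph{current} level $j$ and only the two nonlocal terms at level $j-1$, so each $j$-step is itself a nonlinear problem that requires a sub-iteration $u^{(j,i)}$ to solve. The reason for this nesting is that it makes the constraint propagate exactly: because $u^{(j)}\cdot \Pi_{u^{(j)}_\perp}[\cdots] = 0$, the paper deduces
\begin{equation*}
\Box\bigl(u^{(j)}\cdot u^{(j)} - 1\bigr) = 2\bigl(u^{(j)}\cdot u^{(j)} - 1\bigr)\bigl(\nabla u^{(j)}\cdot\nabla u^{(j)} - \partial_t u^{(j)}\cdot\partial_t u^{(j)}\bigr), \qquad (u^{(j)}\cdot u^{(j)} - 1)[0]=0,
\end{equation*}
so $|u^{(j)}|\equiv 1$ by uniqueness. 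This is precisely what makes $\Pi_{u^{(j)}_\perp}$ meaningful when invoking the estimates \eqref{eq:locmultilin2_s2}--\eqref{eq:locmultilin3_s2}, which require the argument of the projector to map into a neighborhood of $S^2$.

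\emph{The gap.} In your single-layer scheme $\Box u^{(j+1)} = F(u^{(j)})$, only the fixed point satisfies the cancellation $u\cdot\Pi_{u_\perp}[\cdots]=0$; the individual iterates do not, so you truly need the tubular-neighborhood claim. The way you argue it, via $H^s\hookrightarrow L^\infty$ and ``$S_{\text{loc}}$-control of $\nabla_{t,x} u^{(j)}$ yields $\|u^{(j)}(t,\cdot)-u(0,\cdot)\|_{L^\infty}\lesssim T^\gamma$'', does not quite close. The $S_{\text{loc}}$ norm gives $\|\nabla_{t,x}P_k u^{(j)}\|_{L_t^2 L_x^\infty}\lesssim 2^{(\frac{n}{2}+\frac12 - s)k}\|P_k u^{(j)}\|_{S_{\text{loc}}}$, and summing over $k$ to recover a bound on $\|\partial_t u^{(j)}\|_{L_t^2 L_x^\infty}$ (and hence on $\int_0^t \partial_t u^{(j)}$ in $L^\infty$) fails for $\frac{n}{2}<s\le\frac{n}{2}+\frac12$ --- precisely the regime the theorem is designed to cover --- because the weight does not decay in the needed direction without extra information on the $k$-profile. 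Since $\dot H^{s-1}$ for those $s$ does not embed in $L^\infty$, this is not a cosmetic issue. The paper's nested construction is how it avoids having to prove a quantitative $L^\infty$ drift bound for the outer iterates; your route would need either that quantitative bound (perhaps by tracking a frequency envelope), or a restructuring of the iteration to preserve the geometric constraint, as the paper does.

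The remaining mechanics of your argument (ball in $S_{\text{loc}}$, absorption via $T^\beta$, contraction via the difference estimates, persistence of regularity) match the paper's in spirit and would go through once the above point is repaired.
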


We use the iterative method to build the solution for the half-wave equation \eqref{eq:halfwm-wave} introduced in Krieger-Sire \cite{krieger2017small}.

\begin{proof}
         
        
         First, we set $Q:=\lim_{|x|\rightarrow\infty}u_0(x)$ for $Q \in S^2$ as defined in Theorem \ref{thm:s2}. Let $u^{(1)}$ be the solution of the free wave map
         \begin{equation}
             \begin{cases}
                 &\Box u^{(1)} =0 \\
                 &u^{(1)}[0]:=\big(u(0,\cdot),\partial_t u(0,\cdot) \big) \in \dot{H}^s \times \dot{H}^{s-1}(\R^n).
             \end{cases}
         \end{equation}

        From the local well-posedness theorem of wave maps (see \cite{klainerman1996estimates}, \cite{klainerman1997remark} and \cite{Tat3}), we have $u^{(1)} \in C([0,T],\dot{H}^s \cap C^1([0,T], \dot{H}^{s-1}(\R^n))$. 
        
        
       For $j\geq 2$, we define $u^{(j)}$ inductively by
         \begin{equation}
            \label{eq:iterate}
            \begin{split}
        &(\partial_t^2 - \triangle) u^{(j)}\\&= u^{(j)}(\nabla u^{(j)}\cdot\nabla u^{(j)} - \partial_t u^{(j)}\cdot\partial_t u^{(j)})\\
        &+\Pi_{u_{\perp}^{(j)}}\big((-\triangle)^{\frac{1}{2}}u^{(j-1)}\big)(u^{(j-1)}\cdot (-\triangle)^{\frac{1}{2}}u^{(j-1)})\\
        &+\Pi_{u_{\perp}^{(j)}}\big[u^{(j-1)} \times (-\triangle)^{\frac{1}{2}}(u^{(j-1)} \times (-\triangle)^{\frac{1}{2}}u^{(j-1)}) - u^{(j-1)} \times(u^{(j-1)} \times (-\triangle)u^{(j-1)})\big]\\
        \end{split}
        \end{equation} 
        
        Since $u^{(j)}$ is also defined with the non-linear terms on the right-hand side of the equation, we need 
        to use a sub-iteration to find it explicitly. For each $u^{(j)}$, we set a sub-iteration scheme for $u^{(j,i)}$, $i\geq 0$ s.t. $u^{(j,0)}$ is the solution of the free wave map with initial data $u[0]$ as above. Then for $i \geq 1$, we let $u^{(j,i)}$ solves

         \begin{equation}
            \label{eq:iterate1}
            \begin{split}
        &(\partial_t^2 - \triangle) u^{(j,i)}\\
        &= u^{(j,i-1)}(\nabla u^{(j,i-1)}\cdot\nabla u^{(j,i-1)} - \partial_t u^{(j,i-1)}\cdot\partial_t u^{(j,i-1)})\\
        &+\Pi_{u_{\perp}^{(j,i-1)}}\big((-\triangle)^{\frac{1}{2}}u^{(j-1)}\big)(u^{(j-1)}\cdot (-\triangle)^{\frac{1}{2}}u^{(j-1)})\\
        &+\Pi_{u_{\perp}^{(j,i-1)}}\big[u^{(j-1)} \times (-\triangle)^{\frac{1}{2}}(u^{(j-1)} \times (-\triangle)^{\frac{1}{2}}u^{(j-1)}) - u^{(j-1)} \times(u^{(j-1)} \times (-\triangle)u^{(j-1)})\big].\\
        \end{split}
        \end{equation} 

        Within the sub-iteration of $u^{(j,i)}$, by the multi-linear estimate in Proposition \ref{prop:locmultilinear_s2}, we have 
        $$u^{(j,i-1)} \in S_{loc} \Rightarrow u^{(j,i)} \in S_{loc}.$$
        
        Using the fixed point argument, we conclude that the equation \eqref{eq:iterate1} has a unique solution in $S_{loc}$.
         Next, we show the sequence $ \{u^{(j,i)} \}_{i\geq 0}$ is actually converging to the solution $u^{(j)}$. For $T$ sufficiently small enough, we apply Proposition \ref{prop:linestimates} inductively to \eqref{eq:iterate1} to conclude that $ \{u^{(j,i)} \}_{i\geq 0}$ is a bounded sequence for all $i \in \mathbb{N}$. By the difference estimates \eqref{eq:locmultilin3_s2}, \eqref{eq:locmultilin4_s2}, \eqref{eq:locmultilin5_s2} and \eqref{eq:locmultilin6} in Proposition \ref{prop:locmultilinear_s2}, we know that $u^{(j,i)}$ is a Cauchy sequence. Thus we have 

        $$u^{(j,i)} \rightarrow u^{(j)} \ \text{in } S_{loc},$$
         
        We use this argument to the original iteration scheme \eqref{eq:iterate} to conclude
        
        $$
        u^{(j)} \rightarrow u \ \text{in } S_{loc}.
        $$

       Hence, we find the local solution of \eqref{eq:halfwm-wave}.
        Lastly, we need to check all $u^{j}$ maps into $S^2$. From the above, we only know that $u^{(j)}$ is closed to $S^2$ w.r.t to $L^\infty$ norm. The result is clear for $u^{(j)}, j = 0,1$. With the iteration scheme, we have 
        \begin{align*}
            &\Box(u^{(j)}\cdot u^{(j)} -1 ) = 2(u^{(j)}\cdot u^{(j)} - 1)(\nabla u^{(j)}\cdot\nabla u^{(j)} - \partial_t u^{(j)} \cdot\partial_t u^{(j)}), \\
            &(u^{(j)}\cdot u^{(j)} - 1)[0] = 0,
        \end{align*}
        for all $j\geq 2$. This inductively implies that $u^{(j)}$ maps into $S^2$ for all $j$. \par
        
        If our initial data $u[0]$ is smooth, we can also conclude the solution is smooth, by differentiating the equation \eqref{eq:iterate} and \eqref{eq:iterate1} and bootstrapping. 
        
\end{proof}
\section{Global Well-Posedness With $S^2$ target}
\label{sec:global}
For the Cauchy problem of critical half-wave equation with small $\dot{H}^{\frac{n}{2}} \times \dot{H}^{\frac{n}{2}-1}(\R^n)$ initial data, we first study the wave type equation \eqref{eq:halfwm-wave} by adopting the strategies in the work of Tao \cite{2000mathTao}. Then we show that the solution for the wave type equation \eqref{eq:halfwm-wave} is actually solve the original equation \eqref{eq:halfwm} to conclude the Theorem \ref{thm:s2}. \par
For the wave type half-wave map \eqref{eq:halfwm-wave}, we have 
\begin{Proposition}
    \label{prop:waveform}
    Let $n \geq 5$, $u: \R^{1+n} \rightarrow S^2$, such that $\lim_{|x|\rightarrow \infty} \ u(t,x) =Q$ for a fixed $Q\in S^2$, for all t. Let $u[0]=(u(0,\cdot), u_t(0,\cdot))=(u_0,u_1):\R^n \rightarrow S^2 \times TS^2$ be a smooth pair of data as in Theorem \ref{thm:s2} satisfies the wave type half-wave map \eqref{eq:halfwm-wave} such that $u[0] \in \dot{H}^s \times \dot{H}^{s-1}(\R^n)$ for $s>\frac{n}{2}$ and its $\dot{H}^{\frac{n}{2}} \times \dot{H}^{\frac{n}{2}-1}(\R^n)$ is sufficiently small s.t.
    \begin{equation}
        \label{con:wave_init_s2}
        \|u[0]\|_{\dot{H}^{\frac{n}{2}} \times \dot{H}^{\frac{n}{2}-1}} < \epsilon, \ \epsilon  \ll 1.  
    \end{equation}
     Then the Cauchy problem for \eqref{eq:halfwm-wave} with initial data $u[0]$ has a global smooth solution s.t. 
    \begin{equation}
        \label{wave_bound}
    \big\| u[t] \big\|_{L^\infty_t \big(\dot H^{\frac{n}{2}}_x \times \dot H^{\frac{n}{2}-1}_x \big)} \lesssim \big\| u[0] \big\|_{\dot H^{\frac{n}{2}}_x \times \dot H^{\frac{n}{2}-1}_x}.
    \end{equation}
\end{Proposition}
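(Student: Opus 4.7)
The plan is to combine the sub-critical local well-posedness of Theorem \ref{thm:local} with a global bootstrap in the critical norm $\|\cdot\|_S$ defined in \eqref{eq:Snorm}. Theorem \ref{thm:local} produces a smooth solution on some interval $[0,T_{0}]$ at the higher-regularity level $\dot{H}^{s}\times \dot{H}^{s-1}$, $s>\tfrac{n}{2}$. The goal is to upgrade the a priori control from sub-critical to critical, so that $\|u\|_S$ is controlled by $\|u[0]\|_{\dot H^{n/2}\times\dot H^{n/2-1}}$ alone, after which a standard continuation argument, together with persistence of higher regularity, yields a global smooth solution and the bound \eqref{wave_bound}.

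First I would pick a slowly varying frequency envelope $(c_k)_{k\in\mathbb{Z}}$ for the data, with $c_k\le 2^{\sigma|k-k'|}c_{k'}$ for a small $\sigma>0$, such that $\|P_k u[0]\|_{\dot H^{n/2}\times\dot H^{n/2-1}}\le c_k$ and $\sum_k c_k^2\lesssim\epsilon^2$. The bootstrap assumption to propagate on any interval $[0,T]\subseteq[0,T_{0}]$ is $\|P_k u\|_{S_k}\le C c_k$ for all $k$, with $C$ absolute. By the linear estimate of Proposition \ref{prop:linestimates}, it suffices to bound $\|P_k\Box u\|_N\le C' c_k\,\epsilon^{2}$, which then closes if $\epsilon$ is sufficiently small. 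The multilinear content needed is the critical (non-localized in time) analogue of Proposition \ref{prop:locmultilinear_s2}, i.e.\ with the gain $T^\beta$ replaced by a pure smallness factor coming from the bootstrap hypothesis; the summability $\sum_{k_1}2^{-\sigma|k-k_1|}\|P_{k_1}u\|_S\lesssim c_k\|u\|_S$ remains valid and governs the envelope propagation. This is exactly the frequency-envelope, approximately-parallel-transport scheme from Tao \cite{2000mathTao}.

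I would then handle the three pieces of \eqref{eq:halfwm-wave} separately. The classical wave-map nonlinearity $u(\nabla u\cdot\nabla u-\partial_t u\cdot\partial_t u)$ is controlled by the standard null-form trilinear estimate; the dangerous high-high-to-low paraproduct in which the outer $u$ carries the highest frequency is neutralized by the $S^{2}$-constraint $u\cdot u=1$, exploited through the identity \eqref{eq:cancel}, which converts the offending piece into a commutator of favorable type. For the first non-local term $\Pi_{u_\perp}[(u\cdot\Lah u)\Lah u]$, the projection $\Pi_{u_\perp}$ contributes only a Lipschitz factor, and the remaining trilinear expression is estimated by Strichartz $L^2_t L^4_x$ (available for $n\ge 5$, which is the source of the dimension restriction) together with \eqref{singlefreq}. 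For the second, genuinely non-local term $\Pi_{u_\perp}[u\times\Lah(u\times\Lah u)-u\times(u\times(-\Delta)u)]$, the key is the commutator-type gain \eqref{singlefreq2} provided by Lemma \ref{lem:multilinestimates}: the difference $\Lah(u\times\Lah u)-u\times(-\Delta) u$ acts, in the high-high regime, as a bilinear multiplier of symbol $\chi_0(\xi+\eta)|\eta|(|\xi+\eta|-|\eta|)$, which is smooth on the relevant annulus and gives a factor $2^{k_1+k_2}$. This converts the expression into something morally of the form $u\,\nabla u\cdot\nabla u$, at which point the standard trilinear wave-map estimate closes the bound.

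The main obstacle will be precisely the commutator analysis for this last non-local piece: verifying that the bilinear symbol satisfies the hypotheses of Lemma \ref{lem:multilinestimates} uniformly across the frequency regimes (high-high to low, high-low, and low-high), and that the ensuing gain of two derivatives suffices to absorb the non-local terms into an envelope estimate with summable tail. Once all three contributions yield $\|P_k\Box u\|_N\lesssim c_k\epsilon^2$, the bootstrap closes, giving $\|u\|_S\lesssim\epsilon$ uniformly in $T$. The bound \eqref{wave_bound} follows directly from the definition of $\|\cdot\|_S$, while a parallel bootstrap at level $s>\tfrac{n}{2}$, using Proposition \ref{prop:locmultilinear_s2} with the sub-critical $S_{\mathrm{loc}}$ norm and the just-established smallness in $S$, propagates the $\dot{H}^s\times\dot{H}^{s-1}$ bound forward in time. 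Hence the local solution extends to all of $\mathbb{R}^{n+1}$, and smoothness is preserved by differentiating the equation and iterating the same argument.
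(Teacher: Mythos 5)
There is a genuine gap at the heart of your bootstrap closure. You assert that it suffices to prove $\|P_k\Box u\|_{N}\lesssim c_k\varepsilon^{2}$ and then invoke Proposition~\ref{prop:linestimates} to close. That bound is \emph{false} for the full right-hand side of \eqref{eq:halfwm-wave} at $\dot H^{n/2}$ regularity. After removing all the genuinely small pieces (including both non-local terms, for which your plan via Lemma~\ref{lem:multilinestimates}, \eqref{singlefreq}, \eqref{singlefreq2}, \eqref{eq:cancel}, and the $L^2_tL^4_x$ Strichartz estimate for $n\geq 5$ is correct and matches the paper), the classical wave-map nonlinearity still contains the low-low-high interaction
\[
-2\,u_{\leq -10}\,\bigl(\nabla_{t,x}u_{\leq -10}\cdot\nabla_{t,x}u_0\bigr),
\]
which is not an error: you cannot put $\nabla_{t,x}u_0$ and $\nabla_{t,x}u_{\leq -10}$ into a wave-admissible Strichartz pair summing to $L^1_tL^2_x$ without a non-admissible endpoint, and the output is of size $\sim C_0^2 c_0\varepsilon$, not $c_0\varepsilon^2$. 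This is exactly what forces Tao's renormalization. The paper's proof explicitly isolates this term, rewrites it in the antisymmetric form $\Box u_0 = 2A_\alpha\partial^\alpha u_0 + \textit{error}$, constructs the approximately-orthogonal gauge $U$ of Proposition~\ref{u-est}, and sets $w = U^{-1}u_0$ so that $\Box w = \textit{error}$; only after that change of unknowns does the linear estimate close the bound $\|P_0 u\|_S\leq C_0c_0$. You mention ``the frequency-envelope, approximately-parallel-transport scheme'' in passing, but your actual outline never carries out the gauge transformation, and your concluding step ($\|P_k\Box u\|_N\lesssim c_k\varepsilon^2$ closes the bootstrap) presupposes a direct multilinear bound that the gauge is specifically designed to circumvent.

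A secondary inaccuracy: you locate the danger in ``the high-high-to-low paraproduct in which the outer $u$ carries the highest frequency,'' neutralized by $u\cdot u=1$. In the paper the constraint \eqref{eq:orthomicro}--\eqref{eq:cancel} is deployed to tame specific configurations inside the non-local terms (and a localized version \eqref{eq:orthomicrolocalized} handles a high-low piece there); it is not the remedy for the irreducible low-low-high wave-map interaction, which must be gauged away. Your plan should also spell out the continuity (open/closed interval) argument giving Proposition~\ref{main-prop} and the scaling reduction to output frequency $k=0$, though those omissions are minor compared with the missing renormalization.
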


Firstly, we introduce the frequency fold localization concept from Tao \cite{2000mathTao}. We define the notion of frequency fold for small $\dot{H}^{\frac{n}{2}}$ norm with respect to its $\ell^2$ summation property. For a fixed $0 < \sigma < \frac{1}{4}$ only depends on dimension $n$ and the small constant $0 < \varepsilon \ll 1$, we define

\begin{Definition}\label{envelope-def}  A \emph{frequency envelope} is a sequence $c = \{c_k\}_{k \in \Z} \subseteq \R$ such that
\begin{equation}
\label{l2-ass}
\|c\|_{l^2} \lesssim \varepsilon,
\end{equation}
and
\begin{equation}
    \label{local}
2^{-\sigma |k-k'|}  \lesssim \frac{c_k}{c_{k'}} \lesssim 2^{\sigma |k-k'|}
\end{equation}
for all $k, k' \in \Z$.  Moreover, we say $c_k \sim c_k'$ if $k = k' + O(1)$. \par
\end{Definition}

For a frequency envelope $c$ and a pair of functions $(f,g)$ on $\R^n$, we say that $(f,g)$ lies underneath the envelope $c$ if
$$ \| P_k f \|_{\dot H^{\frac{n}{2}}} + \| P_k g \|_{\dot H^{\frac{n}{2}-1}} \leq c_k \quad \forall k \in \Z.$$

One can easily verify that for a pair of function $(f,g)$ lies underneath an envelope $c$ if and only if
\begin{equation}
    \label{init}
\| (f,g) \|_{\dot H^{n/2} \times \dot H^{n/2-1}} \lesssim \varepsilon.
\end{equation}

With the definition of frequency envelope, we now define \textit{error} terms, essentially, they are the functions with small $L_t^1L_x^2([0,T]\times \R^n)$ norm (without further specify,  the Strichartz norms are all defined on $[0,T]\times \R^n$ in the following). 
\begin{Definition}
    For a function $u$ defined on $[0,T] \times \R^{n}$, we call it error if 
    $$\|u\|_{L_t^1 L_x^2} \lesssim  C_0^3 c_0 \epsilon$$
\end{Definition}
 For the half-wave equation \eqref{eq:halfwm-wave}, we will show that the additional non-local terms of the equation are \textit{errors}. Specifically, we will have
\begin{equation}  
    \label{eq:hwm-linear}
    \Box u_0 = -2 u_{\leq -10} \nabla_{t,x} u_{\leq -10} \nabla_{t,x} u_0 + \textit{error}.
\end{equation}
This process is called linearization in the work of Tao \cite{2000mathTao}. \par
Instead of proving the global well-posedness Proposition \ref{prop:waveform} directly, we first prove a local well-posedness result of $\eqref{eq:halfwm-wave}$ on $[0,T]\times \R^n$, $ 0<T<\infty$ for critical initial data. Then we show that the local solution can be extended globally.

\begin{Proposition}
    \label{main-prop}  Let $0 < T < \infty$, $c$ be a frequency envelope $\|c\|_{\ell^2} \lesssim \epsilon$ for a sufficiently small $0<\varepsilon \ll 1$. Let $u$ be the map in Proposition \ref{prop:waveform} which solves the equation \eqref{eq:halfwm-wave} on $[0,T] \times \R^n$ for initial data $u[0]$ lies underneath the envelope $c$.  We have
    \begin{equation}
        \label{control}
    \| P_k u \|_{S_k([0,T] \times \R^n)} \leq C_0 c_k
    \end{equation}
    for all $k \in \Z$, and a constant $C_0(n) \gg 1$ depends only on $n$. Hence $u(t)$ lies underneath the frequency fold $c$ for all $t\in [0,T]$.
    \end{Proposition}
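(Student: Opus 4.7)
The plan is a standard continuity/bootstrap argument on the $S_k$--norms, combined with the frequency envelope bookkeeping of Tao and the ``approximately parallel transport'' renormalization for the resonant low-high piece of the nonlinearity. Concretely, I would fix the constant $C_0 = C_0(n) \gg 1$ (to be chosen at the end) and introduce the bootstrap set
\[
I := \bigl\{\, t_0 \in [0,T] \, : \, \| P_k u \|_{S_k([0,t_0] \times \R^n)} \le 2 C_0 c_k \ \text{for all } k \in \Z \,\bigr\}.
\]
Openness of $I$ in $[0,T]$ and continuity of $t_0 \mapsto \| P_k u \|_{S_k([0,t_0] \times \R^n)}$ follow from the sub-critical local well-posedness of Theorem \ref{thm:local}, which also ensures $0 \in I$. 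The goal is then to upgrade the factor $2 C_0$ back to $C_0$ under the bootstrap assumption so that $I$ is closed, which forces $I = [0,T]$ and proves \eqref{control}.

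Once the bootstrap is in place, I would apply the Strichartz estimate of Proposition \ref{prop:linestimates} to $P_k u$, so that the task reduces to proving $\| P_k \Box u \|_{N} \le \tfrac{1}{2} C_0 c_k$ for each $k \in \Z$. I decompose the right-hand side of \eqref{eq:halfwm-wave} into the wave-map-type nonlinearity $u\,(\partial_t u \cdot \partial_t u - \nabla u \cdot \nabla u)$ and the two projected non-local terms involving $\Lah$. For the latter, I apply the multilinear Lemma \ref{lem:multilinestimates} together with the commutator estimates \eqref{singlefreq}--\eqref{singlefreq2}, which let one trade a factor of $\Lah$ for a genuine derivative at the cost of $2^{k_{\min}}$, reducing these terms schematically to expressions of the form $u\,\nabla u\,\nabla u$ up to ``error'' contributions controlled in $L^1_t L^2_x$ in the sense of the excerpt. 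For frequency interactions where no low input meets the output frequency $k$ (the genuine high-high cases), standard trilinear Strichartz / $\dot{X}^{s,b}$ bounds parallel to those of Section \ref{sec:local} together with the orthogonality identity \eqref{eq:cancel} dominate each dyadic piece by $c_{k_1} c_{k_2} c_{k_3}$, and the envelope property \eqref{local} absorbs the off-diagonal sums into $\varepsilon^2 \, c_k$.

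The main obstacle, exactly as in Tao \cite{2000mathTao}, is the resonant low-high interaction
\[
P_k\bigl( u_{\le k-10}\, \nabla_{t,x} u_{\le k-10}\, \nabla_{t,x} u_k \bigr),
\]
for which no bilinear null-form gain is available and a direct Strichartz bound on the $\dot{X}^{\frac{n}{2}-1,-\frac{1}{2},\infty}$ component of $N$ leaves a factor $\|u_{\le k-10}\|_{L^\infty_t \dot{H}^{\frac{n}{2}}}$ of size only $O(1)$, not $O(\varepsilon)$. Following \cite{2000mathTao}, I would renormalize $P_k u$ via a microlocal orthogonal frame constructed from $u_{\le k-10}$ (the ``approximately parallel transport'' mentioned in the excerpt), so that the bad resonance is absorbed into a covariant d'Alembertian whose source is again trilinear in $u$ but now in a favourable frequency configuration. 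Establishing the linearization \eqref{eq:hwm-linear} is precisely the step where one must verify that the additional non-local pieces of \eqref{eq:halfwm-wave} beyond the pure wave-map term contribute only ``error'' terms compatible with this renormalization; this is handled by the same machinery of Section \ref{sec:transform} together with \eqref{singlefreq}--\eqref{singlefreq2} and the cancellation \eqref{eq:cancel}. Collecting all the estimates yields a bound of the form $\| P_k \Box u \|_{N} \lesssim C_0^3 \, \varepsilon \, c_k$, so choosing $\varepsilon$ sufficiently small relative to $C_0$ (and $C_0$ large enough to absorb the implicit Strichartz constant in Proposition \ref{prop:linestimates}) closes the bootstrap and proves \eqref{control}.
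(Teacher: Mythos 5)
Your proposal follows essentially the same route as the paper: a continuity/bootstrap argument on a frequency-envelope bound, reduction of the bulk of the nonlinearity to ``error'' terms via the multilinear and commutator estimates and the orthogonality identity, and Tao's approximately-parallel-transport renormalization for the resonant low-high piece. Two presentational slips are worth fixing. First, you have the topology of the bootstrap backwards: with $I = \{\,t_0 : \|P_k u\|_{S_k([0,t_0])} \le 2C_0 c_k\ \forall k\,\}$, closedness of $I$ comes for free from continuity of $t_0 \mapsto \|P_k u\|_{S_k([0,t_0])}$, while it is the bootstrap upgrade $2C_0 \to C_0$ that produces the strict inequality needed (together with continuity and local well-posedness) to show $I$ is \emph{open}; saying the bootstrap ``shows $I$ is closed'' is the wrong direction, though the overall connectedness argument still works. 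Second, the claimed reduction to $\|P_k \Box u\|_{N} \le \tfrac{1}{2} C_0 c_k$ cannot hold literally, and indeed you yourself point out a page later that the resonant low-high piece $P_k(u_{\le k-10}\nabla u_{\le k-10}\nabla u_k)$ carries only an $O(1)$ coefficient; the correct statement, as in the paper, is that after peeling off the error pieces one is left with the linearized equation \eqref{eq:hwm-linear}, and the smallness is recovered only after conjugating by the parallel-transport frame $U$ so that the renormalized $\Box w$ is error. Neither slip is a substantive gap — the machinery you invoke (Proposition \ref{prop:linestimates}, Lemma \ref{lem:multilinestimates}, \eqref{singlefreq}--\eqref{singlefreq2}, \eqref{eq:cancel}, and Proposition \ref{u-est}) is the right machinery and is used in the paper in the order you describe.
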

Since $C_0,\ c_k$ are constants independent of $T$, we can iteratively extend the local solution of Proposition~\ref{main-prop} to prove the Proposition \ref{prop:waveform}. \par

For initial data $u[0] \in H^s \times H^{s-1}$, $\frac{n}{2}<s<s+\sigma$, for a $0<\sigma <\frac{1}{4}$, we have a local smooth local solution of \eqref{eq:halfwm-wave} $u$ on $[0,T]\times \R^n$ by Theorem \ref{thm:local}. Further assume the smallness of the $\dot{H}^{\frac{n}{2}} \times \dot{H}^{\frac{n}{2}-1}$ of $u[0]$ as in Proposition \ref{main-prop}, we have
\begin{align*}
    \|P_k u\|_{L_t^\infty H^s\times H^{s-1}([0,T]\times \R^n)} &\lesssim  2^{(\frac{n}{2}-s)k} \| P_k u\|_{S([0,T]\times \R^n)} \\
    &\leq C_0 2^{(\frac{n}{2}-s)k} c_k.
\end{align*}
For the frequency fold $c=(c_k)_{k\in \Z}$ such that
$$
c_k:= \sum_{k' \in \Z} 2^{-\sigma|k-k'|} \| P_k u[0] \|_{\dot{H}^{\frac{n}{2}} \times \dot{H}^{\frac{n}{2}-1}}.
$$ 
We can further deduce that
\begin{align*}
    \|P_k u\|_{L_t^\infty H^s\times H^{s-1}([0,T]\times \R^n)}
    &\lesssim C_0 \sum_{k_1} 2^{(s-\frac{n}{2}-\sigma)|k-k_1|} \|P_k u[0]\|_{\dot{H}^{s}\times \dot{H}^{s-1}}.
\end{align*} 
Thus we conclude that 
\begin{equation*}
    \|u\|_{L_t^\infty H^s\times H^{s-1}([0,T]\times \R^n)} \lesssim \|u[0]\|_{L_t^\infty H^s\times H^{s-1}([0,T]\times \R^n)}.
\end{equation*}

So we extend the local solution $u$ globally to prove Proposition \ref{prop:waveform}. Our task reduces to show Proposition \ref{main-prop}. \par
\begin{proof}[Proof of Proposition \ref{main-prop} ]
 We use a continuity argument to show that the \eqref{control} hold on $[0,T]\times \R^n$. We define the solution time interval s.t.
\begin{equation}
    I:=\left\{ 0\leq t \leq T; \big\|P_k u\|_{S_k([0,T]\times \R^n)} \leq C_0 c_k \right\}.
\end{equation}

If we prove the interval $I=[0,T]$, then we prove the Proposition \ref{main-prop}. Firstly, $I$ is a non-empty set. We know the initial data lies underneath the frequency fold $c$, i.e.
 
$$\big\|P_k u[0]\|_{S_k} \leq C_0 c_k.$$ 

Thus $0\in I$, and $I \neq \emptyset$.

Next, we show that $I$ is both an open and a closed subset of $[0,T]$. This implies that $I$ is the whole interval $[0,T]$.  \par

Note that the map
$$ t \rightarrow \big\|P_k u \big\|_{S_k([0,t]\times \R^n)}$$

is continuous on $[0,T]$, so $I$ is a closed set.  \par

The argument for $I$ is open is more complicated. We first introduce a larger set:
\begin{equation}
    I':=\left\{ 0\leq t' \leq T; \big\|P_k u\|_{S_k([0,T]\times \R^n)} \leq 2C_0 c_k \right\}.
\end{equation}

The difference between $I$ and $I'$ is just the bound is enlarged by a factor of 2. Essentially, we want to show that $\big\|P_k u\|_{S_k([0,T]\times \R^n)}$ can not exceed $C_0c_k$ on $[0,T]\times \R^n$. \par
In order to show $I$ is an open set, we first argue that  $I \subseteq \mathring{I'}$ (the interior of $I'$). Then we show that $I' \subseteq I$. This guarantees $I$ is an open set.  \par

We have a priori bound
$$\|P_k u\|_{S_k([0,t]\times \R^n)} \lesssim 2^{-{|k|}},$$ 

uniformly for $0\leq t \leq T$ and $\forall k \in \mathbb{Z}$. \par
For a $t_1 \in I$ s.t. $0<t_1< T$, 
we can find $t_1< t_2 \leq T$ s.t.
\begin{equation}
    \|P_k u\|_{S_k([0,t_2]\times \R^n)} \leq 2C_0c_k, \ \forall k \in \mathbb{Z},
\end{equation}

by the continuity of map $ t \rightarrow \big\|P_k u \big\|_{S_k([0,t]\times \R^n)}$. So we conclude $I \subseteq \mathring{I'}$. \par
We only left to show $I'\subseteq I$. This argument translates into the following proposition.

\begin{Proposition}
    \label{reduced}  Let $0 < T < \infty$, $c$ be a frequency envelope s.t. $\|c\|_{\ell^2} \lesssim \epsilon$ for a sufficiently small $0<\epsilon \ll 1$. Let $u$ be the map defined in Theorem~\ref{thm:s2} which solves the wave type half-wave map \eqref{eq:halfwm-wave} on $[0,T] \times \R^n$ for the initial data $u[0]$ lies underneath the envelope $c$. s.t.
    \begin{equation}
        \|P_k u\|_{S([0,T]\times \R^n)}\leq 2C_0c_k \quad \forall k\in \Z.
    \end{equation}
    Then, we have 
    \begin{equation}
        \label{eq:prop4.3}
        \|P_k u\|_{S([0,T]\times \R^n)}\leq C_0c_k \quad \forall k \in \Z.
    \end{equation} 
\end{Proposition}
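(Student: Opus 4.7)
The plan is to reduce Proposition~\ref{reduced} to a nonlinear estimate via the Strichartz bound of Proposition~\ref{prop:linestimates} and then close a trilinear smallness bound. Applied to $P_k u$ on $[0,T]\times\R^n$ it yields
\[
\|P_k u\|_{S_k([0,T]\times\R^n)} \lesssim \|P_k u[0]\|_{\dot H^{n/2}\times\dot H^{n/2-1}} + \|P_k\Box u\|_{N([0,T]\times\R^n)}.
\]
The first term is at most $c_k$ because $u[0]$ lies under the envelope $c$, so it suffices to establish $\|P_k\Box u\|_N\leq \tfrac12 C_0 c_k$ for every $k\in\Z$. To do this I expand each of the three nonlinearities on the right-hand side of \eqref{eq:halfwm-wave} via Littlewood--Paley trichotomy and insert the bootstrap hypothesis $\|P_j u\|_{S_j}\leq 2C_0 c_j$ into two of the three factors in each term.

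For the local quasilinear piece $u(\nabla u\cdot\nabla u-\partial_t u\cdot\partial_t u)$ I use the standard wave-map null-form estimates to treat the high--high-to-low interactions, aided by the cancellation \eqref{eq:cancel} coming from $u\cdot u=1$, which removes the leading diagonal high--high contribution at the symbol level. The high--low and low--high interactions I control through the product rule $\|fg\|_N\lesssim\|f\|_S\|g\|_S$ at a fixed output frequency, combined with the envelope-summation property $\sum_{k_1}2^{-\sigma|k-k_1|}c_{k_1}\lesssim c_k$ from \eqref{local}, with one free $\|u\|_S\lesssim\|c\|_{\ell^2}\lesssim\varepsilon$ factor supplying the required smallness.

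The two non-local terms are handled by Lemma~\ref{lem:multilinestimates}: the commutator estimates \eqref{singlefreq}--\eqref{singlefreq2} let me move $\Lah$ across a product at the cost of a factor $2^{k_{\min}}$, which effectively converts the non-local nonlinearities into local trilinear expressions of schematic form $u\,\nabla u\cdot\nabla u$. The same trichotomy and envelope reasoning as above then closes them, so that the only remaining issue is the resonant regime common to all three nonlinearities.

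The main obstacle is the high--low resonant regime, where the two low-frequency factors cluster into $u_{\leq k-10}$ and behave essentially as constants, producing the schematic linear term $-2u_{\leq k-10}\,\nabla_{t,x}u_{\leq k-10}\cdot\nabla_{t,x}P_k u$ already flagged in \eqref{eq:hwm-linear}. This cannot be absorbed by naive smallness; instead I invoke the approximately parallel-transport / magnetic gauge renormalization of Tao~\cite{2000mathTao}, conjugating $P_k u$ by a suitably chosen matrix-valued symbol built from $u_{\leq k-10}$ so that the renormalized variable satisfies a wave equation whose right-hand side is a genuine \emph{error} in the sense defined above. Collecting all contributions gives $\|P_k\Box u\|_N \lesssim C_0^2\varepsilon\cdot c_k$, which is $\leq \tfrac12 C_0 c_k$ once $C_0$ is first chosen large (to beat the Strichartz constant) and then $\varepsilon$ is taken small depending on $C_0$; this closes the bootstrap and yields \eqref{eq:prop4.3}.
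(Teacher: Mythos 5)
Your overall skeleton is the same as the paper's: reduce to a frequency-localized Strichartz estimate, decompose the three nonlinearities by Littlewood--Paley trichotomy, show the extra non-local pieces are \emph{errors}, isolate the resonant high--low term, and renormalize it away with Tao's approximately-parallel transport. However, the closing step of the proposal is logically incorrect.

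You assert that after the gauge renormalization one obtains $\|P_k\Box u\|_N \lesssim C_0^2\varepsilon\, c_k$, and that this closes the bootstrap via Strichartz applied to $P_k u$ itself. This is not what the parallel transport accomplishes, and the bound as stated is false: the resonant term $-2u_{\leq k-10}\,\nabla_{t,x}u_{\leq k-10}\cdot\nabla_{t,x}P_k u$ is genuinely \emph{not} small in $N$ — that is precisely why renormalization is needed in the first place, a fact you correctly acknowledge one paragraph earlier before contradicting it. What the construction actually produces is a new variable $w := U^{-1}P_k u$ satisfying $\Box w = \textit{error}$, i.e.\ $\|\Box w\|_N \lesssim C_0^3\varepsilon c_k$. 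One then applies the Strichartz inequality to $w$ (whose Fourier support is still confined to a fixed annulus up to the low-frequency support of $U$), controls $\|w[0]\|_{\dot H^{n/2}\times\dot H^{n/2-1}}\lesssim C_0 c_k$ by the uniform bounds on $U^{-1}$, and then transfers $\|w\|_{S}\lesssim C_0 c_k$ back to $\|P_k u\|_{S}$ using Proposition~\ref{u-est}\ref{u-infty} and \ref{u-energy}. The conclusion $\|P_k u\|_S\leq C_0 c_k$ is reached through the $S$-norm of the gauged variable, never through an $N$-norm smallness of $\Box u$. As written, your argument invokes the right tool but applies the wrong closing inequality to the wrong quantity, so the bootstrap does not close. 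A minor secondary remark: the paper first rescales to $k=0$ using the scaling invariance of $S$ and the shift property of frequency envelopes, which tidies the book-keeping and is implicitly used when taking frequency supports to a fixed annulus; you omit this reduction, and while it is not strictly necessary, you should be aware that working at general $k$ then requires carrying the $2^{k}$ weights explicitly through the gauge construction.
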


After we prove the Proposition \ref{reduced}, we close the argument that $I=[0,T]$. Hence we proved Proposition \ref{main-prop}.

\end{proof}
\subsection{Proof of Proposition \ref{reduced}}
\label{sec:error}
Instead of proving the \eqref{eq:prop4.3} holds for all $k \in \mathbb{Z}$, it's enough to show that the result holds for $k=0$ because of the scaling invariance of the $S$ norm. To see this, we consider a scaling solution 
\begin{equation*}
    \bar{u}(t,x):= u(\frac{t}{2^k},\frac{x}{2^k}), \quad \forall(t,x) \in [0,2^k T]\times \R^n
\end{equation*}

and the corresponding frequency fold 
$$
\bar{c}=(\bar{c}_{k'})_{k' \in \mathbb{Z}}:=(c_{k+k'})_{k'\in \mathbb{Z}}.
$$

By the definition of $\ell^2$ norm, we know that $\|\bar{c}\|_{\ell^2}=\|c\|_{\ell^2}$. The scaled initial data $\bar{u}[0]$ is also in the frequency fold $\bar{c}$. Then we verify that
\begin{align*}
    \|P_{k'} \bar{u}\|_{S([0,2^k T]\times \R^n)} =\|P_{k'+k} u\|_{S([0,T]\times \R^n)} \leq 2C_0c_{k+k'}= 2C_0\bar{c}_{k'}, \ \forall k' \in \mathbb{Z}.
\end{align*}

 So we have $\bar{u}$ satisfies the assumption of Proposition \ref{reduced} in $[0,2^k T]$. We also have 
 \begin{equation*}
     \|P_k u\|_{S([0,T]\times \R^n)} =\|P_0 \bar{u}\|_{S([0,2^k T] \times \R^n)} \leq C_0 \bar{c}_0 =C_0 c_k.
 \end{equation*}

Hence we can just verify \eqref{eq:prop4.3} for $k=0$. For the simplicity of notations, we will just write $u$ instead of $\bar{u}$ and $u_k:=P_k u$ in the following. \par
Let's apply the projection $P_0$ to the \eqref{eq:halfwm-wave}, we derive
    \begin{align}
        \Box u_0 =& (\partial^2_t - \Delta) u_0 \nonumber \\
        =&P_0 \left[(\nabla u \cdot \nabla u-\partial_t u \cdot \partial_t u) u \right] \label{error1} \\
        &+P_0 \left[\Pi_{\tilde{u}_{\perp}}(\Lah u)\ \left( u \cdot \Lah u \right)\right]  \label{error2} \\
        &+P_0 \left[\Pi_{\tilde{u}_{\perp}} \left(u \times \Lah \left(u \times \Lah u \right) - u \times \left(u \times (-\Delta) u \right) \right) \right] \label{error3}.
    \end{align} \par
We change $\Pi_{u_{\perp}}$ to $\Pi_{\tilde{u}_{\perp}}$ where $\tilde{u}$ maps into a small neighborhood of $S^2$ in the sense that $\|\tilde{u}\|_{L_{t,x}^\infty} \simeq 1$ with $\|\tilde{u}\|_S \lesssim 1$ as the projection only concerns about vector direction.  \par



From the Proposition 4.2 in \cite{2000mathTao} and the multilinear estimate of \cite{krieger2017small},  we can directly write the wave term \eqref{error1} as
\begin{align*}
    P_0\big[(\nabla u \cdot \nabla u-\partial_t u \cdot \partial_t u) u\big] = 2 \left(\nabla u_{\leq -10} \cdot \nabla u_0 -\partial_t  u_{\leq -10} \cdot \partial _t u_0 \right)  \ u_{\leq -10}+ \textit{error}
\end{align*}

Next, we show both the \eqref{error2} and \eqref{error3} terms are \textit{error}. \par
For the \eqref{error2}, we write
\begin{align}  
\Pi_{\tilde{u}_{\perp}}\big((-\triangle)^{\frac{1}{2}}u\big)(u\cdot (-\triangle)^{\frac{1}{2}}u) &= \sum_{|k_1-k_2|\leq 10}\Pi_{\tilde{u}_{\perp}}\big((-\triangle)^{\frac{1}{2}}u\big)(u_{k_1}\cdot (-\triangle)^{\frac{1}{2}}u_{k_2})\label{eq:miltilin2decomp1}\\
&+ \sum_{k_1}\Pi_{\tilde{u}_{\perp}}\big((-\triangle)^{\frac{1}{2}}u\big)(u_{k_1}\cdot (-\triangle)^{\frac{1}{2}}u_{<k_1-10})\label{eq:miltilin2decomp2}\\
&+\sum_{k_2}\Pi_{\tilde{u}_{\perp}}\big((-\triangle)^{\frac{1}{2}}u\big)(u_{<k_2-10}\cdot (-\triangle)^{\frac{1}{2}}u_{k_2})\label{eq:miltilin2decomp3},
\end{align}

and considers the three terms separately. \par

First, we introduce a lemma to deal with the projection term,
\begin{Lemma}
    \label{lem:multilin2}
     For $u, \tilde{u}$ as in the Proposition \ref{reduced}. Then we have the following estimates for the projector term:
     \begin{equation}\label{eq:piu}
        \big\|P_0\big(\Pi_{\tilde{u}^{\perp}}\big((-\triangle)^{\frac12}u\big)\big\|_{S} \lesssim C_0\epsilon,
        \end{equation}
         and by re-scaling, we also have 
        \begin{equation}\label{eq:piu1}
        \big\|P_k\big(\Pi_{\tilde{u}^{\perp}}\big((-\triangle)^{\frac12}u\big)\big\|_{S}\lesssim 2^k C_0\epsilon.
        \end{equation} 
\end{Lemma}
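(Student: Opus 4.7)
The plan is to decompose the tangent-plane projector as $\Pi_{\tilde u^\perp}(V) = V - (\tilde u\cdot V)\,\tilde u$, which reduces the claim to the two estimates
\begin{equation*}
\|P_0(\Lah u)\|_S \lesssim C_0\epsilon \quad \text{and} \quad \bigl\|P_0\bigl(\tilde u\,(\tilde u\cdot \Lah u)\bigr)\bigr\|_S \lesssim C_0\epsilon.
\end{equation*}
The first is essentially free: $\Lah$ commutes with $P_0$ and restricts to a bounded Fourier multiplier on frequency-$\sim 1$ functions, so $\|P_0(\Lah u)\|_S \lesssim \|P_0 u\|_S \le 2C_0 c_0 \lesssim C_0\epsilon$ by the standing bound of Proposition~\ref{reduced} together with $c_0\le \|c\|_{\ell^2}\lesssim \epsilon$.

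For the triple product I would Littlewood--Paley decompose each factor, writing $\tilde u=\sum_{k_1}\tilde u_{k_1}$, $\tilde u=\sum_{k_2}\tilde u_{k_2}$, $\Lah u=\sum_{k_3}\Lah u_{k_3}$, and split the sum by the trichotomy on $(k_1,k_2,k_3)$ compatible with a $P_0$ output. In the non-resonant regimes, where a single high-frequency factor balances the output at frequency $\sim 1$ and the other factors are at much lower frequency, Hölder across the Strichartz components of the $S$-norm, combined with $\|\tilde u\|_{L^\infty_{t,x}}\simeq 1$, $\|\tilde u\|_S \lesssim 1$ and the envelope bound $\|P_k u\|_{S_k}\le 2C_0 c_k$, together with the geometric gain $2^{-\sigma|k_j-k_\ell|}$ from summing the envelope off-diagonally, yields the desired $C_0\epsilon$ bound after a standard double summation.

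The main obstacle will be the high-high-to-low resonance \emph{inside} the scalar product, i.e.\ configurations with $k_2\sim k_3 \gg 0$ whose contribution to $\tilde u_{k_2}\cdot \Lah u_{k_3}$ lies at a much lower frequency. A direct Hölder estimate loses the factor $2^{k_3}$ brought out by $\Lah$. My plan is to invoke the commutator identity~\eqref{singlefreq} (via Lemma~\ref{lem:multilinestimates} with the multiplier $m(\xi,\eta)=\chi_0(\xi+\eta)(|\xi+\eta|-|\eta|)$, after a rescaling that moves $(k_2,k_3)$ into the admissible range) to replace $\tilde u_{k_2}\cdot \Lah u_{k_3}$ by $\Lah(\tilde u_{k_2}\cdot u_{k_3})$ modulo an acceptable remainder, and then to exploit the algebraic cancellation~\eqref{eq:cancel} arising from $u\cdot u=1$ to rewrite the diagonal high-high sum as a high-low sum $-2\sum_{k}\Lah(u_k\cdot u_{<k-10})$, which falls squarely in the non-resonant analysis above. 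Because $\tilde u$ is only near $S^2$, applying~\eqref{eq:cancel} produces an extra term of the form $\Lah(|\tilde u|^2-1)$; this is itself bilinear in quantities with $\|\tilde u\|_S\lesssim 1$ and can be absorbed perturbatively.

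Finally, the rescaled bound~\eqref{eq:piu1} follows from~\eqref{eq:piu} by applying the frequency-$0$ estimate to the dilation $u^{(\lambda)}(t,x):=u(\lambda t,\lambda x)$ with $\lambda=2^{k}$: both the $S$-norm and the frequency envelope are scale invariant, while $\Lah$ contributes the multiplicative factor $2^{k}$.
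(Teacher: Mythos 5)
Your opening moves match the paper: you decompose $\Pi_{\tilde u^\perp}(V)=V-(\tilde u\cdot V)\,\tilde u$ (the paper normalizes $\tilde u$ by $\|\tilde u\|$, which is essentially the same since $\|\tilde u\|_{L^\infty_{t,x}}\simeq 1$), dispatch $P_0(\Lah u)$ by multiplier boundedness, and obtain the rescaled bound \eqref{eq:piu1} via dilation. The problem is the ``main obstacle'' paragraph.

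The trilinear term to bound is $(\tilde u\cdot\Lah u)\,\tilde u$, and the scalar product is the \emph{mixed} product $\tilde u\cdot\Lah u$, not $u\cdot\Lah u$. Your plan is to apply the commutator identity~\eqref{singlefreq} and then the algebraic identity $u\cdot u=1$ (via~\eqref{eq:cancel}) to convert the diagonal high-high sum into a high-low sum. But neither $u\cdot u=1$ nor $\tilde u\cdot\tilde u\approx 1$ controls the mixed product $\tilde u\cdot u$: in the Lemma's hypotheses, $\tilde u$ and $u$ are a priori unrelated maps (e.g.\ distinct iterates), and there is no geometric identity that makes $\tilde u_{k_2}\cdot u_{k_3}$ behave like a high-low product. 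The ``extra term $\Lah(|\tilde u|^2-1)$'' you invoke would arise if the inner product were $\tilde u\cdot\Lah\tilde u$, but here it is $\tilde u\cdot\Lah u$. So the cancellation step does not close the argument.

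The paper avoids this entirely by noting that the feared resonance is actually harmless. If $\Lah u$ lives at high frequency $k_1>10$ while the output is at frequency $\sim 1$, then at least one of the two $\tilde u$ factors must be at frequency $\gtrsim 2^{k_1}$; since $\|\tilde u\|_{L^\infty_{t,x}}\simeq 1$ and $\|\tilde u\|_S\lesssim 1$, this high-frequency piece satisfies $\|\tilde u_{>k_1-O(1)}\|_{L_t^\infty L_x^2}\lesssim 2^{-\frac{n}{2}k_1}$, which for $n\geq 5$ comfortably absorbs the $2^{k_1}$ loss from $\Lah$. The paper then estimates all three frequency regimes ($k_1>10$, $|k_1|\leq 10$, $k_1<-10$) by plain H\"older across $L_t^2L_x^\infty$ and $L_t^\infty L_x^2$ and interpolation — no commutator lemma, no use of $u\cdot u=1$. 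To repair your proof, replace the ``cancellation'' step by this frequency-balancing argument: in the high-high case, track which $\tilde u$ factor carries the high frequency and use its $L_t^\infty L_x^2$ smallness.
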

\begin{proof}
  We first write
    \begin{align*}
        P_0\big(\Pi_{\tilde{u}^{\perp}}\big((\Lah u\big) = P_0\Lah u - P_0\big((\frac{\tilde{u}}{\|\tilde{u}\|}\cdot(\Lah u)\frac{\tilde{u}}{\|\tilde{u}\|}\big).
        \end{align*}

        For the term involve $\tilde{u}$, we split it into

        \begin{align*}
        P_0\big((\frac{\tilde{u}}{\|\tilde{u}\|}\cdot(\Lah u)\frac{\tilde{u}}{\|\tilde{u}\|}\big) &= P_0\big((\frac{\tilde{u}}{\|\tilde{u}\|}\cdot(\Lah u_{>k+10})\frac{\tilde{u}}{\|\tilde{u}\|}\big)\\
        & + P_0\big((\frac{\tilde{u}}{\|\tilde{u}\|}\cdot(\Lah u_{[k-10,k+10]})\frac{\tilde{u}}{\|\tilde{u}\|}\big)\\
        &+ P_0\big((\frac{\tilde{u}}{\|\tilde{u}\|}\cdot(\Lah u_{<k-10})\frac{\tilde{u}}{\|\tilde{u}\|}\big)\\
        \end{align*}

    Since $\tilde{u}$ maps into a small neighborhood of $S^2$ , we know that $\big\|\tilde{u}\big\|_{L_{t,x}^\infty} \sim 1$. For $k_1>10$, we have 
    $$\|\tilde{u}_{>k_1-a}\|_{L_t^{\infty}L_x^2} \lesssim 2^{-\frac{n}{2}k_1}, \  \text{for} \ a=5,10.$$
    Using the Bernstein inequality and Hölder's inequality, we have 
    \begin{align*}
    \big\|P_0\left([\frac{\tilde{u}}{\|\tilde{u}\|}]_{>k_1-10}\cdot\Lah u_{k_1}\right)\big\|_{L_t^2L_x^\infty} 
      &\lesssim \big\|P_0\left(\tilde{u}_{>k_1-10}\cdot\Lah u_{k_1}\right)\big\|_{L_t^2L_x^2} \\
      &\le  \ \left\| \tilde{u}_{>k_1-10} \right\|_{L_t^{\infty} L_x^2} \| \Lah u_{k_1} \|_{L_t^2 L_x^{\infty}}\\
      &\lesssim 2^{-\frac{nk_1}{2}}\| \Lah u_{k_1} \|_{L_t^2 L_x^{\infty}}.
    \end{align*}

      Thus we can estimate the first term as 
      \begin{align*}
        &\big\|P_0\big((\frac{\tilde{u}}{\|\tilde{u}\|}\cdot\Lah u_{>10})\frac{\tilde{u}}{\|\tilde{u}\|}\big)\big\|_{L_t^2L_x^\infty\cap L_t^\infty L_x^2} \nonumber \\
        &\leq \sum_{k_1>10}\big\|P_0\big(([\frac{\tilde{u}}{\|\tilde{u}\|}]_{>k_1-10}\cdot\Lah u_{k_1})\frac{\tilde{u}}{\|\tilde{u}\|}\big)\big\|_{L_t^2L_x^\infty\cap L_t^\infty L_x^2} \nonumber \\
        &+\sum_{k_1>10}\big\|P_0\big(([\frac{\tilde{u}}{\|\tilde{u}\|}]_{\leq k_1-10}\cdot\Lah u_{k_1})[\frac{\tilde{u}}{\|\tilde{u}\|}]_{>k_1-5}\big)\big\|_{L_t^2L_x^\infty\cap L_t^\infty L_x^2} \nonumber\\
        &\lesssim  \sum_{k_1>10}\sum_{a=5,10}\big\|[\frac{\tilde{u}}{\|\tilde{u}\|}]_{>k_1-a}\big\|_{L_t^\infty L_x^2}\big\|\Lah u_{k_1}\big\|_{L_t^2L_x^\infty\cap L_t^\infty L_x^2} \\
        &\lesssim \sum_{k_1>10}2^{(1-\frac{n}{2})k_1}\big\|u_{k_1}\big\|_{S}\\
        &\lesssim C_0\epsilon 
        \end{align*}    

    Note here, we use the Cauchy-Schwartz inequality $\sum_{k_1>10}2^{(1-\frac{n}{2})k_1} c_k \lesssim \|c\|_{\ell^2} \lesssim \epsilon$ for the last inequality.

    Similarly, for the second term on the right, we have 
\begin{align*}
\big\|P_0\big((\frac{\tilde{u}}{\|\tilde{u}\|}\cdot(-\triangle)^{\frac12}u_{[-10,10]})\frac{\tilde{u}}{\|\tilde{u}\|}\big)\big\|_{L_t^2L_x^\infty\cap L_t^\infty L_x^2}&\lesssim \big\|(-\triangle)^{\frac12}u_{[-10,10]}\big\|_{L_t^2L_x^\infty\cap L_t^\infty L_x^2}\\
&\lesssim \sum_{k_1\in [-10,10]} 2^{\frac{|k_1|}{2}}\big\|u_{k_1}\big\|_{S} \\
&\lesssim C_0\epsilon 
\end{align*}

Finally, for the last term, we deduce
\begin{align*}
&\big\|P_0\big((\frac{\tilde{u}}{\|\tilde{u}\|}\cdot(-\triangle)^{\frac12}u_{<-10})\frac{\tilde{u}}{\|\tilde{u}\|}\big)\big\|_{L_t^2L_x^\infty\cap L_t^\infty L_x^2}\\
&\lesssim \big\|(-\triangle)^{\frac12}u_{<k-10}\big\|_{L_t^2 L_x^\infty\cap L_{t,x}^{\infty}}\big\|[\frac{\tilde{u}}{\|\tilde{u}\|}]_{>-10}\big\|_{L_t^\infty L_x^2}\\
&\lesssim  \sum_{k_1<-10}2^{-\frac{n}{2}k_1}2^{-\frac{|k_1|}{2}}\big\|u_{k_1}\big\|_{S} \\ 
&\lesssim C_0\epsilon 
\end{align*}

Thus we can conclude that 
\begin{equation}
    \big\|P_0\big((\frac{\tilde{u}}{\|\tilde{u}\|}\cdot(\Lah u)\frac{\tilde{u}}{\|\tilde{u}\|}\big) \big\|_{L_t^2L_x^\infty\cap L_t^\infty L_x^2}  \lesssim C_0\epsilon
\end{equation}

Interpolating between $L_t^2 L_x^\infty$ and $L_t^\infty L_x^2$ norms within the wave admissible range, we obtain that 
\begin{equation*}
    \big\|P_0\big((\frac{\tilde{u}}{\|\tilde{u}\|}\cdot(\Lah u)\frac{\tilde{u}}{\|\tilde{u}\|}\big) \big\|_{S}  \lesssim C_0\epsilon
\end{equation*}

 Scaling the frequency to $\sim 2^k$, we have \eqref{eq:piu1}
\end{proof}
With Lemma \ref{lem:multilin2}, it's easy to see that
\begin{align*}
    &\big\|\Pi_{\tilde{u}^{\perp}}\big((-\triangle)^{\frac12}u \big)\big\|_{L_t^\infty L_x^2+ L_{t,x}^\infty}\lesssim C_0\epsilon, \\
    &\big\|P_{[-20,20]}\big(\Pi_{\tilde{u}_{\perp}}\big((-\triangle)^{\frac{1}{2}}u\big)\big)\big\|_{L_t^\infty L_x^2}\lesssim C_0\epsilon,\\
    &\big\|P_{<10}\Pi_{\tilde{u}_{\perp}}\big((-\triangle)^{\frac{1}{2}}u\big)\big\|_{L_t^2 L_x^\infty}\lesssim C_0\epsilon.
\end{align*}

 We first estimate \eqref{eq:miltilin2decomp1} on the right-hand side. We further split it into
    \begin{align*}
    &\big\|P_0\big[ \sum_{|k_1-k_2|\leq 10}\Pi_{\tilde{u}_{\perp}}\big((-\triangle)^{\frac{1}{2}}u\big)(u_{k_1}\cdot (-\triangle)^{\frac{1}{2}}u_{k_2})\big]\big\|_{L_t^1 L_x^2}\\
    &\lesssim \sum_{\substack{|k_1-k_2|\leq 10\\ k_1<-20}}\big\|P_{[-20,20]}\big(\Pi_{\tilde{u}_{\perp}}\big((-\triangle)^{\frac{1}{2}}u\big)\big)\big\|_{L_t^\infty L_x^2}\big\|u_{k_1}\big\|_{L_t^2 L_x^\infty}\big\|(-\triangle)^{\frac{1}{2}}u_{k_2}\big\|_{L_t^2 L_x^\infty}\\
    & +  \sum_{\substack{|k_1-k_2|\leq 10\\ k_1\geq -20}}\big\|\Pi_{\tilde{u}_{\perp}}\big((-\triangle)^{\frac{1}{2}}u\big)\big\|_{L_t^\infty L_x^2+L_{t,x}^\infty}\big\|u_{k_1}\big\|_{L_t^2 L_x^4}\big\|(-\triangle)^{\frac{1}{2}}u_{k_2}\big\|_{L_t^2 L_x^4}.\\
    \end{align*}

For the first part, we have
        \begin{align*}
        &\sum_{\substack{|k_1-k_2|\leq 10\\ k_1<-20}}\big\|P_{[-20,20]}\big(\Pi_{\tilde{u}_{\perp}}\big((-\triangle)^{\frac{1}{2}}u\big)\big)\big\|_{L_t^\infty L_x^2}\big\|u_{k_1}\big\|_{L_t^2 L_x^\infty}\big\|(-\triangle)^{\frac{1}{2}}u_{k_2}\big\|_{L_t^2 L_x^\infty}\\
        &\lesssim C_0\epsilon \ \sum_{\substack{|k_1-k_2|\leq 10\\ k_1<-20}}2^{\frac{k_2-k_1}{2}} \|u_{k_1}\|_S \ \|u_{k_2}\|_S\\
        &\lesssim  C^3_0 \epsilon^2
        \end{align*}
     
Further, we also have
        \begin{align*}
            &\sum_{\substack{|k_1-k_2|\leq 10\\ k_1\geq -20}}\big\|\Pi_{\tilde{u}_{\perp}}\big((-\triangle)^{\frac{1}{2}}u\big)\big\|_{L_t^\infty L_x^2+L_{t,x}^\infty}\big\|u_{k_1}\big\|_{L_t^2 L_x^\infty}\big\|(-\triangle)^{\frac{1}{2}}u_{k_2}\big\|_{L_t^2 L_x^\infty}\\
            &\lesssim  C_0 \epsilon \ \big(\sum_{\substack{|k_1-k_2|\leq 10\\ k_1\geq -20}}2^{\frac{k_2-k_1}{2}}\big\|u_{k_1}\big\|_{S_{k_1}}\big\|u_{k_2}\big\|_{S_{k_2}}\big)\\
            &\lesssim  C_0^3\epsilon^2
            \end{align*}

        Hence we conclude that \eqref{eq:miltilin2decomp1} is an \textit{error}.  \par
        
        Next, for \eqref{eq:miltilin2decomp2}, we split it into three parts:
        \begin{align*}
        &\sum_{k_1}\Pi_{\tilde{u}_{\perp}}\big((-\triangle)^{\frac{1}{2}}u\big)(u_{k_1}\cdot (-\triangle)^{\frac{1}{2}}u_{<k_1-10})\\&= \sum_{k_1\geq 5}\Pi_{\tilde{u}_{\perp}}\big((-\triangle)^{\frac{1}{2}}u\big)(u_{k_1}\cdot (-\triangle)^{\frac{1}{2}}u_{<k_1-10})\\
        & +  \sum_{k_1\in[-5,5]}\Pi_{\tilde{u}_{\perp}}\big((-\triangle)^{\frac{1}{2}}u\big)(u_{k_1}\cdot (-\triangle)^{\frac{1}{2}}u_{<k_1-10})\\
        & +  \sum_{k_1<-5}\Pi_{\tilde{u}_{\perp}}\big((-\triangle)^{\frac{1}{2}}u\big)(u_{k_1}\cdot (-\triangle)^{\frac{1}{2}}u_{<k_1-10})\\
        \end{align*}

        For the first sum $k_1\geq 5$, we have 
        \begin{align*}
        &\big\|P_0\big( \sum_{k_1\geq 5}\Pi_{\tilde{u}_{\perp}}\big((-\triangle)^{\frac{1}{2}}u\big)(u_{k_1}\cdot (-\triangle)^{\frac{1}{2}}u_{<k_1-10})\big)\big\|_{L_t^1 L_x^2}\\
        &\lesssim  \sum_{k_1\geq 5}\big\|P_{[k_1 - 5,k_1+5]}\big(\Pi_{\tilde{u}_{\perp}}\big((-\triangle)^{\frac{1}{2}}u\big)\big)\big\|_{L_t^\infty L_x^2}\big\|u_{k_1}\big\|_{L_t^2 L_x^\infty}\big\|(-\triangle)^{\frac{1}{2}}u_{<k_1-10}\big\|_{L_t^2 L_x^\infty}\\
        &\lesssim C_0 \epsilon \sum_{\substack{k_1\geq 5 \\ k_2<k_1-10}} 2^{\frac{k_2-k_1}{2}} \|u_{k_1}\|_S \|u_{k_2}\|_S\\
        &\lesssim C_0^3 \epsilon^2
        \end{align*}
    
        Similarly, for the second term, we have 
        \begin{align*}
        &\big\|P_0\big[ \sum_{k_1\in[-5,5]}\Pi_{\tilde{u}_{\perp}}\big((-\triangle)^{\frac{1}{2}}u\big)(u_{k_1}\cdot (-\triangle)^{\frac{1}{2}}u_{<k_1-10})\big]\big\|_{L_t^1 L_x^2}\\
        &\lesssim \sum_{k_1\in[-5,5]}\big\|P_{<10}\Pi_{\tilde{u}_{\perp}}\big((-\triangle)^{\frac{1}{2}}u\big)\big\|_{L_t^2 L_x^\infty}\big\|u_{k_1}\big\|_{L_t^\infty L_x^2}\big\|(-\triangle)^{\frac{1}{2}}u_{<k_1-10}\big\|_{L_t^2 L_x^\infty} \\
        &\lesssim C_0 \epsilon \ \sum_{\substack{k_1 \in[-5,5]  \\ k_2<k_1-10}} \ 2^{-\frac{5k_1}{2}} 2^{\frac{k_2}{2}}\ \|u_{k_1}\|_S \|u_{k_2}\|_S\\
        &\lesssim C^3_0c_0 \epsilon,
        \end{align*}
        
        Finally, we have
        \begin{align*}
        &P_0\big(\sum_{k_1<-5}\Pi_{\tilde{u}_{\perp}}\big((-\triangle)^{\frac{1}{2}}u\big)(u_{k_1}\cdot (-\triangle)^{\frac{1}{2}}u_{<k_1-10})\big)\\
        & = P_0\big(\sum_{k_1<-5}P_{[-2,2]}\big(\Pi_{\tilde{u}_{\perp}}\big((-\triangle)^{\frac{1}{2}}u\big)\big)(u_{k_1}\cdot (-\triangle)^{\frac{1}{2}}u_{<k_1-10})\big)\\
        &\lesssim \big\| P_{[-2,2]}\big(\Pi_{\tilde{u}_{\perp}}\big((-\triangle)^{\frac{1}{2}}u\big)\big) \big\|_{L_t^\infty L_x^2} \sum_{k_1<-5} \| u_{k_1} \|_{L_t^2L_x^\infty} \|\Lah u_{<k_1-10} \|_{L_t^2L_x^\infty} \\
        &\lesssim C_0 \epsilon \sum_{\substack{k_1\leq -5 5 \\ k_2<k_1-10}} 2^{\frac{k_2-k_1}{2}} \|u_{k_1}\|_S \|u_{k_2}\|_S \\
        &\lesssim C^3_0\epsilon^2
        \end{align*}

    Hence, we showed that \eqref{eq:miltilin2decomp2} is an \textit{error}. \par
    
    Lastly, for the third term \eqref{eq:miltilin2decomp3}. We first use the multilinear estimate Lemma~\ref{lem:multilinestimates} to show that the difference
    \begin{align*}
        &\sum_{k_2}\big\| \Pi_{\tilde{u}_{\perp}}\big((-\triangle)^{\frac{1}{2}}u\big)(u_{<k_2-10}\cdot (-\triangle)^{\frac{1}{2}}u_{k_2}) - \Pi_{\tilde{u}_{\perp}}\big((-\triangle)^{\frac{1}{2}}u\big)(-\triangle)^{\frac{1}{2}}(u_{<k_2-10}\cdot u_{k_2}) \big\|_{L_t^1 L_x^2} \\
        &\lesssim C_0 \epsilon \ \sum_{k_2}\sum_{k_3<k_2-10} 2^{k_3} \|u_{k_3}\|_{L_t^2L_x^\infty} \|u_{k_2}\|_{L_t^2L_x^\infty} \\
        &\lesssim C_0 \epsilon \sum_{k_2} \sum_{k_3<k_2-10} 2^{\frac{k_3-k_2}{2}} \|u_{k_2}\|_S \|u_{k_3}\|_S \\
        &\lesssim C^3_0 \epsilon^2,
    \end{align*}
    is an \textit{error} term. Thus it suffices to bound
    \begin{align*}
    &\sum_{k_2}\Pi_{\tilde{u}_{\perp}}\big((-\triangle)^{\frac{1}{2}}u\big)(-\triangle)^{\frac{1}{2}}(u_{<k_2-10}\cdot u_{k_2})= -\sum_{|k_3 - k_4|<10}\frac12\Pi_{\tilde{u}_{\perp}}\big((-\triangle)^{\frac{1}{2}}u\big)(-\triangle)^{\frac{1}{2}}(u_{k_3}\cdot u_{k_4})
    \end{align*}
    where we used \eqref{eq:orthomicro}. Again, we treat the cases for $k_3<-20$ and $k_3 \geq -20$ separately. For the low-frequency case, we have 
    \begin{align*}
    &\big\|P_0\big[\sum_{\substack{|k_3 - k_4|<10\\ k_3<-20}}\frac12\Pi_{\tilde{u}_{\perp}}\big((-\triangle)^{\frac{1}{2}}u\big)(-\triangle)^{\frac{1}{2}}(u_{k_3}\cdot u_{k_4})\big]\big\|_{L_t^1 L_x^2}\\
    &\lesssim \sum_{\substack{|k_3 - k_4|<10\\ k_3<-20}}\big\|P_{[-10,10]}\big[\Pi_{\tilde{u}_{\perp}}\big((-\triangle)^{\frac{1}{2}}u\big)\big]\big\|_{L_t^\infty L_x^2}\big\|(-\triangle)^{\frac{1}{2}}(u_{k_3}\cdot u_{k_4})\big\|_{L_t^1 L_x^\infty} \\
    &\lesssim \big\|P_{[-10,10]}\big[\Pi_{\tilde{u}_{\perp}}\big((-\triangle)^{\frac{1}{2}}u\big)\big]\big\|_{L_t^\infty L_x^2} \ 
    \sum_{\substack{|k_3 - k_4|<10\\ k_3<-20}}\big\|(-\triangle)^{\frac{1}{2}}(u_{k_3}\cdot u_{k_4})\big\|_{L_t^1 L_x^\infty} \\
    &\lesssim C_0\epsilon \sum_{\substack{|k_3 - k_4|<10\\ k_3<-20}}2^{k_3}\big\|u_{k_3}\big\|_{L_t^2L_x^\infty}\big\|u_{k_4}\big\|_{L_t^2L_x^\infty}\\
    &\lesssim C_0^3 \epsilon^2, 
    \end{align*}

   For $k_3>-20$, we estimate it as above to conclude 
    \begin{align*}
        &\big\|P_0\big[\sum_{\substack{|k_3 - k_4|<10\\ k_3>-20}}\Pi_{\tilde{u}_{\perp}}\big((-\triangle)^{\frac{1}{2}}u\big)(-\triangle)^{\frac{1}{2}}(u_{k_3}\cdot u_{k_4})\big]\big\|_{L_t^1 L_x^2}\\
        &\lesssim \sum_{\substack{|k_3 - k_4|<10\\ k_3>-20}}\big\|\big[\Pi_{\tilde{u}_{\perp}}\big((-\triangle)^{\frac{1}{2}}u\big)\big]\big\|_{L_t^2L_x^\infty+L_t^\infty L_x^\infty}\big\|(-\triangle)^{\frac{1}{2}}(u_{k_3}\cdot u_{k_4})\big\|_{L_t^1 L_x^2} \\
        &\lesssim  C_0 \epsilon \sum_{\substack{|k_3 - k_4|<10\\ k_3>-20}} \big\|(-\triangle)^{\frac{1}{2}}(u_{k_3}\cdot u_{k_4})\big\|_{L_t^1 L_x^2} \\
        &\lesssim  C_0\epsilon \sum_{\substack{|k_3 - k_4|<10\\ k_3>-20}} 2^{k_3+k_4} \big\|u_{k_3}\big\|_{L_t^2L_x^4}\big\|u_{k_4}\big\|_{L_t^2L_x^4}\\ 
        &\lesssim C_0\epsilon \sum_{\substack{|k_3 - k_4|<10\\ k_3>-20}} 2^{-\frac{3(k_3+k_4)}{4}} \big\|u_{k_3}\big\|_{S}\big\|u_{k_4}\big\|_{S}\\ 
        &\lesssim  C^3_0 \epsilon^2.
        \end{align*}
        Combining all the estimates above,
        we conclude that \eqref{eq:miltilin2decomp2}
        \begin{align*}
            P_0\big[\Pi_{\tilde{u}_{\perp}}\big((-\triangle)^{\frac{1}{2}}u\big)(u\cdot (-\triangle)^{\frac{1}{2}}u) \big]=\textit{error}
        \end{align*}

Lastly, we consider the most complicated term \eqref{error3}. As in the Lemma \ref{lem:multilin2}, we know that for a function $F$ s.t.
 $$\big\|P_k F \big\|_{L_t^1 \dot{H}^{\frac{n}{2}-1}} \lesssim 2^{-\sigma|k|} C_0^3c_0\epsilon,$$ 
 for some $\sigma>1$, we have
\begin{equation*}
    \big\|P_0[\Pi_{\tilde{u}^\perp} F] \big\|_{L_t^1 L_x^2} \lesssim \sum_{k} 2^{-|k|} \big\|P_k F \big\|_{L_t^1 \dot{H}_x^{\frac{n}{2}-1}} \lesssim C_0^3 c_0 \epsilon.
\end{equation*}

Therefore, it's sufficient to show that 
\begin{equation}
   \big\|\sum_{k_1,k_2 \in \Z}\ P_0 \big[ u \times \Lah \left(u_{k_1} \times \Lah u_{k_2} \right) - u \times \left(u_{k_1} \times (-\Delta) u_{k_2} \right) \big] \big\|_{L_t^1 L_x^2} \lesssim C^3_0 c_0 \epsilon.   
\end{equation}

We consider different cases in the following. For $k_1>10$, we consider 
\begin{align*}
    &\big\|\sum_{\substack{k_1>10 \\ k_2 \in \Z}}P_0\big[u \times (-\triangle)^{\frac{1}{2}}(u_{k_1} \times (-\triangle)^{\frac{1}{2}}u_{k_2}) - u \times(u_{k_1} \times (-\triangle)u_{k_2})\big]\big\|_{L_t^1 L_x^2}\\
    &=\big\|\sum_{\substack{|k_1 - k_2|<5\\k_1>10}}P_0\big[u \times (-\triangle)^{\frac{1}{2}}(u_{k_1} \times (-\triangle)^{\frac{1}{2}}u_{k_2}) - u \times(u_{k_1} \times (-\triangle)u_{k_2})\big]\big\|_{L_t^1 L_x^2}\\
    &+\big\|\sum_{\substack{k_1>10 \\ k_2<k_1-5}  }P_0\big[u \times (-\triangle)^{\frac{1}{2}}(u_{k_1} \times (-\triangle)^{\frac{1}{2}}u_{k_2}) - u \times(u_{k_1} \times (-\triangle)u_{k_2})\big]\big\|_{L_t^1 L_x^2} \\
    &+\big\|\sum_{\substack{k_1>10 \\ k_2>k_1+5}  }P_0\big[u \times (-\triangle)^{\frac{1}{2}}(u_{k_1} \times (-\triangle)^{\frac{1}{2}}u_{k_2}) - u \times(u_{k_1} \times (-\triangle)u_{k_2})\big]\big\|_{L_t^1 L_x^2}
\end{align*}

For the first term, we use Lemma \ref{lem:multilinestimates}, in particular, \eqref{singlefreq} and \eqref{singlefreq2}. We have
\begin{align*}
&\big\|\sum_{\substack{|k_1 - k_2|<5\\k_1>10}}P_0\big[u \times (-\triangle)^{\frac{1}{2}}(u_{k_1} \times (-\triangle)^{\frac{1}{2}}u_{k_2}) - u \times(u_{k_1} \times (-\triangle)u_{k_2})\big]\big\|_{L_t^1 L_x^2}\\
&\lesssim \|u\|_{L_{t,x}^\infty} \ \sum_{\substack{|k_1 - k_2|<5\\k_1>10}} 2^{k_1+k_2}\big\|P_{k_1}u\big\|_{L_t^2 L_x^4}\big\|u_{k_2}\big\|_{L_t^2 L_x^4} \\
&\lesssim \sum_{\substack{|k_1 - k_2|<5\\k_1>10}}2^{-\frac {3}{4}(k_1+k_2)}\big\|u_{k_1}\big\|_{S} \big\|u_{k_2}\big\|_{S}\\
&\lesssim C^2_0\varepsilon^2.
\end{align*}

Then, for the second term, we have
\begin{align*}
&\big\|\sum_{\substack{k_1>10 \\ k_2<k_1-5}  }P_0\big[u \times (-\triangle)^{\frac{1}{2}}(u_{k_1} \times (-\triangle)^{\frac{1}{2}}u_{k_2}) - u \times(u_{k_1} \times (-\triangle)u_{k_2})\big]\big\|_{L_t^1 L_x^2}\\
    &\lesssim \sum_{\substack{k_1>10 \\ k_2<k_1-5}} \|u_{k_1}\|_{L_t^2L_x^\infty}\  2^{k_1+k_2} \|u_{k_1}\|_{L_t^\infty L_x^2} \ \|u_{k_2}\|_{L_t^2 L_x^\infty} \\
    &\lesssim  \sum_{k_1>10} 2^{-\frac{3}{2}k_1} \|u_{k_1}\|^2_S \ \sum_{k_2<k_1-5} 2^{\frac{k_2-k_1}{2}}\|u_{k_2}\|_S \\
    &\lesssim C_0^3 \epsilon^3
 \end{align*}

The third case, when $k_1>10, \ k_2>k_1+5$, can be symmetrically estimated as 
\begin{align*}
    &\big\|\sum_{\substack{k_1>10 \\ k_2>k_1+5}} P_0\big[u \times (-\triangle)^{\frac{1}{2}}(u_{k_1} \times (-\triangle)^{\frac{1}{2}}u_{k_2}) - u \times(u_{k_1} \times (-\triangle)u_{k_2})\big]\big\|_{L_t^1 L_x^2}\\
        &\lesssim \sum_{\substack{k_1>10 \\ k_2>k_1+5}} \|u_{k_2}\|_{L_t^2L_x^\infty}\  2^{k_1+k_2} \|u_{k_1}\|_{L_{t,x}^\infty} \ \|u_{k_2}\|_{L_t^2 L_x^4} \\
        &\lesssim \sum_{\substack{k_1>10 \\ k_2>k_1+5}} 2^{k_1-k_2}\ 2^{-\frac{3}{2}k_2} \|u_{k_2}\|^2_S \\
        &\lesssim C_0^2 \epsilon^3
     \end{align*}

Hence, we conclude that when $k_1>10$, \eqref{error3} is an \textit{error}. Moreover, the above estimates can be used to get the bound for $k_2>10$ as well. Therefore, we conclude that  \eqref{error3} is an an \textit{error} whence $\max\{k_1, k_2\}>10$. \par

Next, we consider the case for $k_1,k_2<-10$. By Lemma~\ref{lem:multilinestimates}, we have
\begin{align*}
&P_0\big[ u \times (-\triangle)^{\frac12}(u_{k_1} \times (-\triangle)^{\frac12}u_{k_2})\big] =
P_0\big[ u \times ((-\triangle)^{\frac12}u_{k_1} \times (-\triangle)^{\frac12}u_{k_2})\big] +\textit{error} \\
&P_0\big[ u \times \left(u_{k_1} \times (-\Delta) u_{k_2} \right)    \big] =
P_0\big[ u \times ((-\triangle)^{\frac12}u_{k_1} \times (-\triangle)^{\frac12}u_{k_2})\big] +\textit{error}.
\end{align*}

Therefore, we can conclude that
\begin{align*}
    &\big\| \sum_{k_1,k_2<-10} P_0\big[ P_{[-5,5]}u \times \Lah u_{k_1} \times \Lah u_{k_2}\big] \big\|_{L_t^1 L_x^2}  \\
    &\lesssim \sum_{k_1,k_2<-10} \|P_{[-5,5]}u\|_{L_t^{\infty} L_x^2} \|\Lah u_{k_1}\|_{L_t^2 L_x^\infty} \|\Lah u_{k_2}\|_{L_t^2 L_x^\infty} \\
    &\lesssim  C_0 c_0 \sum_{k_1,k_2<-10}\  2^{\frac{k_1+k_2}{2}} \|u_{k_1}\|_S \|u_{k_2}\|_S \\
    &\lesssim C_0^2 c_0 \epsilon^2
\end{align*}

Thus, we know in this case \eqref{error3} is an \textit{error} as well. \par

Finally, we consider the scenario when one frequency is intermediate in $[-10,10]$ and the other one is small. This is the most delicate case.\par
{\it{(i): $k_1\in [-10,10], k_2<10$.}} Firstly, we have
\begin{align*}
\big\|\sum_{\substack{k_1 \in[-10,10] \\ k_2<10}} 
 P_0\big[ u \times(u_{k_1} \times (-\triangle)u_{k_2})\big]\big\|_{L_t^1 L_x^2} 
&\lesssim \sum_{\substack{k_1 \in[-10,10] \\ k_2<10}} \ \big\|u_{k_1} \big\|_{L_t^2 L_x^4}\big\| (-\triangle)u_{k_2})\big\|_{L_t^2 L_x^4}\\
&\lesssim \sum_{\substack{k_1 \in[-10,10] \\ k_2<10}}\ \ 2^{\frac{k_2}{4}}\big\|u_{k_2}\big\|_{S}\big\|u_{k_1}\big\|_{S} \\
&\lesssim C_0^2 c_0 \epsilon.
\end{align*}

As before, Lemma~\ref{lem:multilinestimates} gives us
\[
P_0\big[ u \times (-\triangle)^{\frac12}(u_{k_1} \times (-\triangle)^{\frac12}u_{k_2})\big] =
P_0\big[ u \times ((-\triangle)^{\frac12}u_{k_1} \times (-\triangle)^{\frac12}u_{k_2})\big] +\textit{error}.
\]

Then we do a further decomposition:
\begin{align*}
    &\big\|P_0\big[ u \times ((-\triangle)^{\frac12}u_{k_1} \times (-\triangle)^{\frac12}u_{k_2})\big]\big\|_{L_t^1 L_x^2}\\
    &=\big\|P_0\big[ u_{\geq k_2-10} \times ((-\triangle)^{\frac12}u_{k_1} \times (-\triangle)^{\frac12}u_{k_2})\big]\big\|_{L_t^1 L_x^2}\\
    &+\big\|P_0\big[ u_{\leq k_2-10} \times ((-\triangle)^{\frac12}u_{k_1} \times (-\triangle)^{\frac12}u_{k_2})\big]\big\|_{L_t^1 L_x^2}.
\end{align*}

When the first factor $u$'s frequency is larger than $2^{k_2-10}$, we have
\begin{align*}
&\big\|\sum_{\substack{k_1 \in[-10,10] \\ k_2<10}} \ P_0\big[ u_{\geq k_2-10} \times ((-\triangle)^{\frac12}u_{k_1} \times (-\triangle)^{\frac12}u_{k_2})\big]\big\|_{L_t^1 L_x^2}\\
&\lesssim \sum_{\substack{k_1 \in[-10,10] \\ k_2<10}}\big\|u_{\geq k_2-10}\big\|_{L_t^2L_x^\infty}\big\|(-\triangle)^{\frac12}u_{k_1}\big\|_{L_t^\infty L_x^2}\big\| (-\triangle)^{\frac12}u_{k_2}\big\|_{L_t^2 L_x^\infty}\\
&\lesssim \sum_{\substack{k_1 \in[-10,10] \\ k_2<10}} 2^{-\frac{3}{2}k_1} \|u_{k_1}\|_S \sum_{k_3>k_2-10} 2^{\frac{k_2-k_3}{2}} \ \|u_{k_3}\|_S \|u_{k_2}\|_S \\
&\lesssim C_0^3c_0\epsilon^2.
\end{align*}

Move the the second part, by the cross product rule $$a \times (b \times c)= b(a\cdot c)-c(a\cdot b),$$ we have  
\begin{align}
&P_0\big[ u_{<k_2-10} \times ((-\triangle)^{\frac12}u_{k_1} \times (-\triangle)^{\frac12}u_{k_2})\big] \nonumber \\
& = P_0\big[(-\triangle)^{\frac12}u_{k_1}( u_{<k_2-10} \cdot (-\triangle)^{\frac12}u_{k_2})\big] \label{eq:interlow1}\\
 &- P_0\big[(-\triangle)^{\frac12}u_{k_2}(u_{<k_2-10} \cdot(-\triangle)^{\frac12}u_{k_1})\big] \label{eq:interlow2}
\end{align}

Again, by the Lemma \ref{lem:multilinestimates}, we have
\begin{align*}
    u_{<k_2-10} \cdot \Lah u_{k_2} &= \Lah (u_{<k_2-10} \cdot P_{k_2} u_{[k_2-10,k_2+10]}) +\textit{error} \\
&= \Lah P_{k_2}(u_{<k_2-10} \cdot u_{[k_2-10,k_2+10]}) +\textit{error}
\end{align*}

Using the geometric property property \eqref{eq:orthomicro} of the $S^2$, we have
\begin{equation}\label{eq:orthomicrolocalized}
0=P_{k}(u \cdot u) = 2 P_{k} (u_{[k-10,k+10]}\cdot u_{<k-10}) + P_{k}(u_{>k+10} \cdot u_{>k+10}).
\end{equation}

Therefore we can replace
\begin{equation}
    \label{error3_local}
    P_{k_2}(u_{<{k_2}-10} \cdot u_{[{k_2}-10,{k_2}+10]}) = -\frac{1}{2} P_{k_2}(u_{>{k_2}+10} \cdot u_{>{k_2}+10}).
\end{equation}
 
Hence, we estimate \eqref{eq:interlow1} as

\begin{align*}
    &\big\| \sum_{k_1\in [-10,10]} P_0\big[(-\triangle)^{\frac12}u_{k_1} \sum_{k_2<10} ( u_{<k_2-10} \cdot (-\triangle)^{\frac12} u_{k_2}) \big] \big\|_{L_t^1 L_x^2} \\
    &\simeq \sum_{k_1\in [-10,10]} \big\| P_0\big[\Lah u_{k_1} \sum_{k_2<10} \Lah P_{k_2} (u_{>k_2+10} \cdot u_{>k_2+10}) \|_{L_t^1 L_x^2}  \\
    &\lesssim \sum_{k_1\in [-10,10]} \|\Lah u_{k_1} \|_{L_t^\infty L_x^2} \sum_{k_2<10} 2^{k_2} \|u_{>k_2+10} \|^2_{L_t^2 L_x^\infty} \\
    &\lesssim \sum_{k_1\in [-10,10]} 2^{-\frac{5k_1}{2}}\|u_{k_1}\|_S \ \sum_{k_2<10} \|u_{>k_2+10}\|^2_S \\
    &\lesssim C_0^3c_0\epsilon^2
\end{align*}

We estimate the term \eqref{eq:interlow2} similarly. So we conclude in the case {{(i)}}, \eqref{error3} is an error term. \par

{\it{(ii)}}: $k_2\in [-10,10], k_1<10$. As shown in the case {\it{(i)}}, we further split the cases with respect to the frequency of the first term. For $u_{\geq k_1-10}$, we treat it as above. Thus, we are only left to show the case for $u_{< k_1-10}$.

Using Lemma~\ref{lem:multilinestimates} again, we have 
\begin{align*}
&\big\|\sum_{\substack{k_1<10 \\ k_2 \in [-10,10]}} P_0\big[u_{<k_1-10} \times (-\triangle)^{\frac{1}{2}}(u_{k_1} \times (-\triangle)^{\frac{1}{2}}u_{k_2}) \\
&\quad - (-\triangle)^{\frac{1}{2}}\big(u_{<k_1-10} \times (u_{k_1} \times (-\triangle)^{\frac{1}{2}}u_{k_2})\big)\big]\big\|_{L_t^1 L_x^2}\\
& \lesssim\sum_{\substack{k_1<10 \\ k_2 \in [-10,10]}} \big\|(-\triangle)^{\frac{1}{2}}u_{<k_1-10}\big\|_{L_t^2 L_x^\infty}\big\|u_{k_1}\big\|_{L_t^2 L_x^\infty}\big\|(-\triangle)^{\frac{1}{2}}u_{k_2}\big\|_{L_t^\infty L_x^2} \\
 &\lesssim C_0c_0\  \sum_{\substack{k_1<10 \\ l-k_1:=a \leq -10}} 2^{\frac{a}{2}} \|u_{a+k_1}\|_S \|u_{k_1}\|_S \\ 
 &\lesssim C_0^3c_0 \epsilon^2.
\end{align*}

So we can consider $P_0 \big[(-\triangle)^{\frac{1}{2}}\big(u_{<k_1-10} \times (u_{k_1} \times (-\triangle)^{\frac{1}{2}}u_{k_2})\big)\big]$ instead. We reformulate \eqref{error3} as 

\begin{equation}\label{eq:thefourterms}\begin{split}
&P_0\big[(-\triangle)^{\frac{1}{2}}\big(u_{<k_1-10} \times (u_{k_1} \times (-\triangle)^{\frac{1}{2}}u_{k_2})\big) - u_{<k_1-10} \times(u_{k_1} \times (-\triangle)u_{k_2})\big]\\
& = P_0 \big[ (-\triangle)^{\frac{1}{2}}\big( u_{k_1}(u_{<k_1-10}\cdot (-\triangle)^{\frac{1}{2}}u_{k_2}) -  (-\triangle)^{\frac{1}{2}}u_{k_2}(u_{<k_1-10}\cdot u_{k_1}) \big) \big]\\
& - P_0[\big(u_{k_1}( u_{<k_1-10}\cdot (-\triangle)u_{k_2}) - (-\triangle)u_{k_2}(u_{<k_1-10} \cdot u_{k_1})\big)] \\
&= P_0 \big[ (-\triangle)^{\frac{1}{2}}\big( u_{k_1}(u_{<k_1-10}\cdot (-\triangle)^{\frac{1}{2}}u_{k_2}) -  \big(u_{k_1}( u_{<k_1-10}\cdot (-\triangle)u_{k_2} \big) \big]\\
&+P_0\big[ (-\triangle)u_{k_2}(u_{<k_1-10} \cdot u_{k_1})\big) -\Lah\big( \Lah u_{k_2} (u_{<k_1-10}\cdot u_{k_1}) \big) \big].
\end{split}\end{equation}

Using the multi-linear estimates again, we know the differences 
\begin{align*}
     u_{<k_1-10} \cdot \Lah u_{k_2} - \Lah(u_{<k_1-10} \cdot u_{k_2})  \sim \textit{error},
 \end{align*}

 and

 \begin{align*}
      u_{<k_1-10} \cdot (-\triangle) u_{k_2}-  (-\triangle) (u_{<k_1-10} \cdot u_{k_2}) \sim \textit{error}. 
 \end{align*}

Additionally, by putting $u_{k_1}$ into $L_t^2 L_x^\infty$, we also know that
\begin{align*}
&P_0\big[(-\triangle)^{\frac{1}{2}}\big(u_{k_1}(u_{<k_1-10}\cdot (-\triangle)^{\frac{1}{2}}u_{k_2})\big) \big]-  P_0\big[u_{k_1}( u_{<k_1-10}\cdot (-\triangle)u_{k_2})\big]\\
& = P_0 \big[(-\triangle)^{\frac{1}{2}}\big(u_{k_1}(-\triangle)^{\frac{1}{2}}(u_{<k_2-10}\cdot u_{k_2}) \big] - P_0\big[u_{k_1}(-\triangle)( u_{<k_2-10}\cdot u_{k_2})\big] +\textit{error}.
\end{align*}

Then we use the geometric property of $S^2$ to replace the product $u_{<k_2-10}\cdot u_{k_2}$. We conclude that
\begin{align*}
    &\sum_{\substack{k_1<10 \\k_2\in[-10,10]}} \big\| P_0 \big[(-\triangle)^{\frac{1}{2}}\big(u_{k_1}(-\triangle)^{\frac{1}{2}}(u_{<k_2-10}\cdot u_{k_2}) \big] - P_0\big[u_{k_1}(-\triangle)( u_{<k_2-10}\cdot u_{k_2})\big]\big\|_{L_t^1 L_x^2} \\
&\lesssim \sum_{\substack{k_1<10 \\k_2\in[-10,10]}} \big\|(-\triangle)^{\frac{1}{2}}u_{k_1}\big\|_{L_t^2 L_x^\infty} \ 2^{k_2}\ \big\|u_{>k_2+10}\big\|_{L_t^2 L_x^\infty}\big\|u_{>k_2+10}\big\|_{L_t^\infty L_x^2} \\
&\lesssim \sum_{k_1<10} \ 2^{\frac{k_1}{2}} \|u_{k_1}\|_S \ \sum_{\substack{k_2\in[-10,10] \\k_3>k_2+10}} \  2^{-2k_3} \|u_{k_3}\|^2_S \\
&\lesssim  C^3_0 \epsilon^3.
\end{align*}

Then it only remains to bound the difference 
\begin{equation*}
    P_0\big[ (-\triangle)u_{k_2}(u_{<k_1-10} \cdot u_{k_1})\big) -\Lah\big( \Lah u_{k_2} (u_{<k_1-10}\cdot u_{k_1}) \big) \big].
\end{equation*}
As before, we replace  
\begin{equation*}
    P_{k_1}(u_{<k_1-10} \cdot u_{k_1}) = -\frac{1}{2} P_{k_1}(u_{>k_1+10} \cdot u_{>k_1+10}).
\end{equation*}
Then we deduce
\begin{align*}
    &\sum_{\substack{k_1 <10 \\k_2\in [-10,10]} } \| P_0\big[ (-\triangle)u_{k_2}P_{k_1}(u_{>k_1+10} \cdot u_{>k_1+10} ) \\
    &-\Lah \big( (-\triangle)^{\frac{1}{2}}u_{k_2} P_{k_1}(u_{>k_1+10} \cdot u_{>k_1+10} ) \big)\big]\|_{L_t^1 L_x^2} \\
    &\lesssim \sum_{\substack{k_1 <10 \\k_2\in [-10,10]} } 2^{k_1+k_2} \| P_{k_1}(u_{>k_1+10} \cdot u_{>k_1+10} ) \|_{L_t^1 L_x^\infty} \| u_{k_2}\|_{L_t^\infty L_x^2} \\
    &\lesssim  \sum_{\substack{k_1 <10 \\k_2\in [-10,10]} }  2^{-\frac{3}{2}k_2} \|u_{k_2}\|_S\ \sum_{k_3>k_1+10} 2^{k_1-k_3}\|u_{k_3}\|^2_{S} \\
    &\lesssim C_0^3 c_0 \epsilon^2.
    \end{align*}

Hence we proved that \eqref{error3} is an \textit{error} term.\par
Overall, we showed that all the additional nonlinearity terms introduced by the half-wave equation are \textit{error} terms. Thus, we obtain the linearized equation:
\begin{equation}  
    \label{eq:hwm-linear}
    \Box u_0 = -2 u_{\leq -10} (\nabla_{t,x} u_{\leq -10} \cdot \nabla_{t,x} u_0) + \textit{error} 
\end{equation}

With the linearized equation \eqref{eq:hwm-linear}, we are able to directly apply the result from Tao's approximately parallel transport method to renormalize the equation. We construct an approximately orthogonal $3 \times 3$ invertible matrix $U$ to transform $u_0= U w$. Then we can recast \eqref{eq:hwm-linear} as
\begin{equation}
    \label{eq:w}
    \Box w= \textit{error}
\end{equation}

By the construction of $U$, we will have
$$\|u_0\|_S \lesssim \|w\|_S.$$

Thus it suffices to show that 
$$
\|w\|_S \lesssim C_0c_0
$$

to conclude the Proposition \ref{reduced}. \par
We further extract an antisymmetric structure from \eqref{eq:hwm-linear} to have a more canceling structure.
 From now on, we view $u$ as a column vector of $\R^3$. We rewrite \eqref{eq:hwm-linear} as 
$$
\Box u_0 = -2 u_{\leq 10} \  \partial_\alpha u_{\leq -10}^T \ \partial^\alpha u_0 + \textit{error}.
$$

We know that
\begin{equation*}
    \label{2infty-bound}
    \|\partial_\alpha u_{\leq -10}\|_{L_t^2 L_x^\infty} \lesssim C_0\varepsilon,
\end{equation*}

and 

\begin{equation*}
    \label{sph1}
    \| u_{\leq -10}^T \partial_\alpha u_0\|_{L_t^2 L_x^2} \lesssim  C_0^2 c_0 \varepsilon.
\end{equation*}

Apply H\"older inequality, we have
$$
\partial_\alpha u_{\leq -10} u_{\leq -10}^T \partial^\alpha u_0 = \textit{error}.$$

Thus we can write our equation as 
\begin{equation*}
\Box u_0 = -2 \left( u_{\leq -10} \ \partial_\alpha u^T_{\leq -10}\ \partial_\alpha u_0 - \partial^\alpha u_{\leq -10} \ u^T_{\leq -10} \ \partial^\alpha u_0 \right)+ \textit{error}.
\end{equation*}

We define an anti-symmetric $3 \times3$ matrix
$$ A_\alpha := \partial_\alpha u_{\leq -10} u_{\leq -10}^T - u_{\leq -10} \partial_\alpha u_{\leq -10}^T.$$

 Therefore, our equation formally become 
\begin{equation}
    \label{eq:cancel}
    \Box u_0= 2A_\alpha \partial^\alpha u_0 +\textit{error}.
\end{equation}

In the following, we briefly state the approximate parallel transport of Tao \cite{2000mathTao}.
We construct the real $3 \times 3$-valued matrix field U by induction. Let $U_0:=I$ be the identity matrix. For $k \geq 1$, we set 
\begin{equation}
    \label{ack-accurate}
U_k := (u_k u_{<k}^T) - u_{<k} u_k^T) U_{<k}
\end{equation}

and

$$ U_{<k} := I + \sum_{-M < k' < k} U_{k'},$$
where $M$ is a large integer depending on $T$ to be chosen later. Then we define
$$ U := I + \sum_{-M < k \leq -10} U_k.$$

With this inductive construction, we have the following proposition: 

\begin{Proposition}[Proposition 6.1 in \cite{2000mathTao}]
    \label{u-est}
With the same assumption of Proposition \ref{reduced}, in which $\varepsilon$ is sufficiently small depending on $C_0$. We have the following properties of the invertible matrix $U$:
\begin{enumerate}[label=(\roman*)]
    \item 
        \label{u-invert}
    $\| U^T U - I \|_{L^\infty_t L^\infty_x}, \ 
    \| \partial_t(U^T U - I) \|_{L^\infty_t L^\infty_x}
     \lesssim C_0^2 \varepsilon.$
    \item 
        \label{u-infty}
    $\| U \|_{L^\infty_t L^\infty_x}, \ \| U^{-1} \|_{L^\infty_t L^\infty_x} \lesssim 1.$
    \item 
        \label{1infty}
    $\| \partial_\alpha U - A_\alpha U \|_{L^1_t L^\infty_x} \lesssim C_0^2 \varepsilon.$
    \item 
        \label{u-energy}
    $\| \partial_\alpha U \|_{L^\infty_t L^\infty_x} \lesssim C_0^2 \varepsilon.$
    \item 
        \label{u2}
    $\| \partial_\alpha U \|_{L^2_t L^\infty_x} \lesssim C_0^2 \varepsilon.$
    \item 
        \label{u-dd}
    $\| \Box U \|_{L^2_t L^{n-1}_x} \lesssim C_0^2 \varepsilon$
\end{enumerate}
\end{Proposition}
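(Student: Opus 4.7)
The plan is to prove all six estimates by induction on the truncation parameter, exploiting the multiplicative structure $U_{\le k} = (I + M_k)U_{<k}$ where $M_k := u_k u_{<k}^T - u_{<k}u_k^T$ is a $3\times 3$ antisymmetric matrix. The crucial algebraic observation is that antisymmetry of $M_k$ forces
\[
(I+M_k)^T(I+M_k) = I + M_k^T + M_k + M_k^T M_k = I + M_k^T M_k,
\]
so the ``bad'' linear part cancels and only a quadratic-in-$u_k$ error remains at each stage. Throughout, the baseline pointwise bound I would use is $\|u_k\|_{L^\infty_t L^\infty_x} \lesssim 2^{nk/2}\|u_k\|_{L^\infty_t L^2_x} \lesssim C_0 c_k$ by Bernstein combined with $\|P_k u\|_{S_k} \le 2C_0 c_k$, together with the scaling-adapted estimate $\|\partial_\alpha u_k\|_{L^2_t L^\infty_x} \lesssim 2^{k/2}c_k$ from admissibility of $(2,\infty)$ for $n\ge 5$.

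For (i) and (ii) I would iterate the recursion to obtain the telescoping identity $U^T U - I = \sum_{k} U_{<k}^T M_k^T M_k U_{<k}$, bound each summand by $\|U_{<k}\|^2_{L^\infty_{t,x}}\|M_k\|^2_{L^\infty_{t,x}} \lesssim C_0^2 c_k^2$, and invoke $\sum_k c_k^2 \le \|c\|_{\ell^2}^2 \lesssim \varepsilon^2$ to get (i). The $\partial_t$-variant follows analogously after distributing one time derivative via Leibniz on $M_k^T M_k$ and using $\|\partial_t u_k\|_{L^\infty_t L^\infty_x} \lesssim 2^k C_0 c_k$ summed against $c_k$. Statement (ii) then follows from (i): the $L^\infty$ bound on $U$ comes from $\|U\|_\infty \le 1 + \sum_k \|M_k\|_\infty \|U_{<k}\|_\infty$ closed inductively, while $U^{-1} = (U^TU)^{-1}U^T$ is bounded because $U^T U$ is a small perturbation of $I$ by (i).

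Estimates (iv), (v) and (vi) would then follow by differentiating the recursion $U_{\le k} = (I+M_k)U_{<k}$: for (iv), (v) bound each $\partial_\alpha U_k$ by $\|\partial_\alpha M_k\|_X \|U_{<k}\|_\infty + \|M_k\|_\infty \|\partial_\alpha U_{<k}\|_X$ with $X = L^\infty_t L^\infty_x$ or $L^2_t L^\infty_x$; the cost of the derivative is $2^{k/2}c_k$ in the $L^2$-in-time norm, which sums by Cauchy-Schwarz against $\|c\|_{\ell^2}$. For (vi), apply the Leibniz rule for $\Box$ to get $\Box U_k = \Box M_k \cdot U_{<k} + 2\partial_\alpha M_k \partial^\alpha U_{<k} + M_k \Box U_{<k}$. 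The main contribution is $\Box u_k$, estimated in $L^2_t L^{n-1}_x$ directly from the nonlinear equation \eqref{eq:halfwm-wave} plus the error analysis of Section~\ref{sec:error}, which showed that all right-hand side terms live in $L^1_t \dot H^{n/2-1}_x$ with the required summability.

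The hard part is estimate (iii), which requires matching the algebraic identity
\[
A_\alpha = \sum_{k \le -10}\bigl[\partial_\alpha u_k \cdot u_{<k}^T - u_{<k}\cdot \partial_\alpha u_k^T\bigr] + (\text{high-high paraproduct errors})
\]
against the corresponding ``derivative on $u_k$'' piece of $\partial_\alpha U_k = \partial_\alpha M_k U_{<k} + M_k \partial_\alpha U_{<k}$. Writing $\partial_\alpha M_k = [\partial_\alpha u_k u_{<k}^T - u_{<k}\partial_\alpha u_k^T] + [u_k \partial_\alpha u_{<k}^T - \partial_\alpha u_{<k}u_k^T]$, the first bracket contributes directly to $A_\alpha$ and telescopes through the product $U_{<k} = I + \sum_{k'<k}U_{k'}$, while the second bracket and the commutator $[A_\alpha, U] - \sum_k M_k \partial_\alpha U_{<k}$ contain factors of $\partial_\alpha u_{<k}$ that gain an extra $2^k$ factor by Bernstein, making them summable in $L^1_t L^\infty_x$ via $\sum_k 2^k c_k \lesssim \varepsilon$. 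The main obstacle is the careful bookkeeping: one must organize the double sum into the correctly paired ``main'' telescoping piece plus a manageable collection of high-low and high-high commutator remainders, each of which is quadratic or cubic in $c_k$ and therefore bounded by $C_0^2 \varepsilon$ after one summation.
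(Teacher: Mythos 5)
The paper offers no proof of this proposition: it is cited verbatim as Proposition~6.1 of Tao's wave-maps paper \cite{2000mathTao} and applied as a black box. Since the construction of $U$ and the six estimates depend only on the frequency-envelope hypothesis $\|P_k u\|_{S_k}\le 2C_0 c_k$, not on which PDE $u$ solves, Tao's result transfers to the half-wave setting unchanged, and this is what the paper relies on. Your sketch reconstructs the main mechanism of Tao's actual argument: the multiplicative recursion $U_{\le k}=(I+M_k)U_{<k}$ with $M_k$ antisymmetric, the cancellation $(I+M_k)^T(I+M_k)=I+M_k^TM_k$, the telescoping $U^TU-I=\sum_k U_{<k}^TM_k^TM_kU_{<k}$ whose summands are bounded by $C_0^2c_k^2$ and sum to $C_0^2\varepsilon^2$, and the commutator bookkeeping for (iii). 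This is the right skeleton.

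Two points in your outline deserve a warning. First, the naive closure $\|U\|_\infty\le 1+\sum_k\|M_k\|_\infty\|U_{<k}\|_\infty$ does not bootstrap, because $\sum_k c_k$ is an $\ell^1$ quantity that is \emph{not} controlled by $\|c\|_{\ell^2}\lesssim\varepsilon$; the correct route (which you in fact use for $U^{-1}$, but must also use for $U$) is to run the telescoping identity for $U_{<k}^TU_{<k}$ jointly with the induction and deduce $\|U_{<k}\|^2_{\mathrm{op}}\le 1+\|U_{<k}^TU_{<k}-I\|_{\mathrm{op}}\le 1+O(C_0^2\varepsilon^2)$, so that only the $\ell^2$ sum ever appears. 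Second, your treatment of (vi) is under-specified: the error analysis of Section~\ref{sec:error} gives $L^1_tL^2_x$-type control of the nonlinearity, which neither lives in nor interpolates simply to $L^2_tL^{n-1}_x$; the $L^2_tL^{n-1}_x$ bound on $\Box U$ in Tao's proof requires the $\dot X^{n/2-1,\frac12,\infty}$ component of the $S$-norm (since $\Box$ is a Fourier weight near the light cone) together with a paraproduct decomposition of $\Box(M_kU_{<k})$, and it is by a margin the longest of the six estimates in Tao's argument. Neither issue affects the truth of the proposition, but both would need work to make your sketch a proof rather than an outline.
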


Since $U$ is invertible, we set $w:= U^{-1} u_0$, which is smooth by the assumption. The equation \eqref{eq:cancel} becomes
\begin{equation}
    \label{eq:w}
\Box w = -2 U^{-1} (\partial_\alpha U - A_\alpha U)\  \partial^\alpha w + 2 U^{-1} \  A_\alpha \ \partial^\alpha U \  U^{-1} u_0 - U^{-1}\  \Box U \  U^{-1} u_0 + \textit{error}.
\end{equation}

Now we show that all the terms on the right of \eqref{eq:w} are \textit{error} terms by using the Proposition \ref{u-est}. For the first term, we write
$$ \partial^\alpha w = U^{-1} \partial^\alpha u_0 + U^{-1} (\partial^\alpha U) U^{-1}u_0.$$

Then we have 
\begin{align*}
    &\|U^{-1} (\partial_\alpha U - A_\alpha U) \partial^\alpha w \|_{L_t^1 L_x^2} \\
    &\lesssim \|U^{-1}\|_{L_{t,x}^\infty} \ \| \partial_\alpha U - A_\alpha U\|_{L_t^1 L_x^\infty} \ \|\partial^\alpha w\|_{L_t^\infty L_x^2} \\
    &\lesssim C_0^2 \epsilon \left( \|U^{-1} \partial^\alpha u_0\|_{L_t^\infty L_x^2} + \|U^{-1} \partial^\alpha U U^{-1}u_0 \|_{L_t^\infty L_x^2} \right) \\
    &\lesssim C_0^2 \epsilon \left(\|U^{-1}\|_{L_{t,x}^\infty} \|\partial^\alpha u_0 \|_{L_t^\infty L_x^2} + \|U^{-1}\|_{L_{t,x}^2} \|\partial_\alpha U\|_{L_{t,x}^\infty} \|u_0\|_{L_t^\infty L_x^2} \right) \\
    &\lesssim C_0^2 \epsilon (C_0 c_0+C_0^2 c_0 \epsilon) \\
    &\lesssim C_0^3c_0\epsilon 
\end{align*}

For the second term on the right, 
\begin{align*}
    &\|U^{-1} \  A_\alpha \ \partial^\alpha U \  U^{-1} u_0\|_{L_t^1L_x^2}  \\
    &\lesssim \|U^{-1}\|^2_{L_{t,x}^\infty} \|\partial_\alpha U\|_{L_t^2 L_x^\infty} \|A_{\alpha}\|_{L_t^2 L_x^\infty} \| u_0 \|_{L^\infty_t L^2_x} \\
    &\lesssim C_0^2 c_0^2 \epsilon
\end{align*}

Notice here, we use the estimate
$$ \| A_\alpha \|_{L^2_t L^\infty_x} \lesssim C_0 \varepsilon$$ \par

Lastly, we deduce that
\begin{equation*}
    \|U^{-1} (\Box U) U^{-1} u_0\|_{L_t^1 L_x^2} \lesssim \|\Box U\|_{L_t^2L_x^4} \|u_0\|_{L_t^2 L_x^4} \lesssim C_0^2 c_0 \epsilon.
\end{equation*}

Therefore, we showed that the right-hand side of \eqref{eq:w} is \textit{error}. Hence we conclude
\begin{equation}
    \Box w= \textit{error}.
\end{equation} \par

By the estimates \ref{u-infty}, \ref{u-energy}, we have 
$$ \| u_0 \|_{S} \lesssim \| w \|_{S}.$$ \par
So it suffices to show that
\begin{equation}
\label{w-targ}
\|w \|_{S} \le C_0 c_0.
\end{equation} \par
Although $u_0$ is supported on the frequency annulus $|\xi| \sim 1$, $w$ is not supported on the frequency annulus $|\xi| \sim 1$ because of the multiplication of $U$. By the construction of $U$, we can see that $U$ is supported on the ball $|\xi|\leq 2^{-7}$. So it suffices to consider $w$ on the frequency annulus $2^{-10} \leq |\xi| \leq 2^{10}$.  \par
We use the localized version of the standard Strichartz estimate Proposition \ref{prop:linestimates} to estimate $\|w\|_S$. We have
\begin{equation}
    \|P_k w\|_S \leq \| P_k w(0)\|_{\dot{H}^\frac{n}{2}} + \| P_k \partial_t w(0)\|_{\dot{H}^{\frac{n}{2}-1}} +2^{(\frac{n}{2}-1)k} \|\Box P_k w\|_{L_t^1 L_x^2}, 
\end{equation}
 \par
The initial condition 
\begin{equation*}
    \begin{cases}
        &w(0)=U^{-1}(0), \\
&\partial_t w(0)=U^{-1}(0) \partial_t u(0)- U^{-1}(0) \partial_t U(0) U^{-1}(0) u(0)
    \end{cases}
\end{equation*}

can be easily bounded by 
$$
\|w(0)\|_{L_x^2}+\|\partial_t w(0)\|_{L_x^2} \lesssim \|u[0]\|_{L_x^2} \lesssim C_0 c_0.
$$

By choosing $\epsilon$ small enough s.t. $C_0^2 \epsilon \ll 1$, we have 
\begin{align*}
    \|P_{-10\leq k \leq 10} w\|_S \lesssim \sum_{-10<k<10} \big(2^{\frac{nk}{2}}+2^{(\frac{n}{2}-1)k} \big) C_0c_0+ 2^{(\frac{n}{2}-1)k} C_0^3 \epsilon c_0 \lesssim C_0c_0.
\end{align*}

Therefore, we conclude the Proposition \ref{reduced}. \par

\subsection{Solution Of The Half-Wave Map Equation}
\label{sec:sol}

From Proposition \ref{prop:waveform}, we obtain the global solution for \eqref{eq:halfwm-wave}. We show such a solution also solves the original \eqref{eq:halfwm}. 

\begin{theorem}
\label{thm:solution}
For a smooth initial data $u[0]:\R^{1+n} \rightarrow S^2 \times TS^2$ as in the Theorem~\ref{thm:s2}. For the global smooth solution $u: \R^{1+n} \rightarrow S^2 $ of \eqref{eq:halfwm-wave} obtained from  Proposition \ref{prop:waveform}. We know that $u$ also solves \eqref{eq:halfwm}. 
\end{theorem}

\begin{proof}

In order to show the solution obtains by Proposition \ref{prop:waveform} also solves \eqref{eq:halfwm}, we introduce
\[
X: = u_t - u\times (-\triangle)^{\frac12}u,
\]

and the energy-type functional 
\[
\tilde{E}(t): = \frac12\int_{\R^n}\big|(-\triangle)^{\frac{n-3}{4}}X(t,\cdot)\big|^2\,dx
\]

as in  \cite{krieger2017small} and \cite{2019arXiv190412709K}. \par
Firstly, since the initial data $u[0]\in C_0^\infty$, by a standard energy estimate of the wave equation for \eqref{eq:halfwm-wave}, we know that
$\nabla_{t,x} u(t,\cdot) \in H^{\frac{n-3}{2}}$, $\forall t \in \R$. Therefore $\tilde{E}(t)$ is well defined. 

Because the solution $u$ is smooth, thus the energy-type functional is differentiable with respect to $t$. We obtain

\begin{align*}
    \partial_tX = -X\times  (-\triangle)^{\frac12}u  - u\times(-\triangle)^{\frac12}X - u\big(X\cdot(u\times  (-\triangle)^{\frac12}u + u_t)\big),
    \end{align*}
and 
\begin{equation}
    \label{eq:technical10}
    \begin{split}
\frac{d}{dt}\tilde{E}(t) =& -\int_{\R^n}(-\triangle)^{\frac{n-3}{4}}\big(X\times  (-\triangle)^{\frac12}u +  u\times(-\triangle)^{\frac12}X\big)\cdot (-\triangle)^{\frac{n-3}{4}}X\,dx\\
& - \int_{\R^n}(-\triangle)^{\frac{n-3}{4}}\big(u\big(X\cdot(u\times  (-\triangle)^{\frac12}u + u_t)\big)\big)\cdot(-\triangle)^{\frac{n-3}{4}}X\,dx.\\
\end{split}
\end{equation}


By the multilinear estimate of Lemma~\ref{lem:multilinestimates} and Sobolev inequality $\dot{H}^{\frac{n-3}{2}} \hookrightarrow L^{\frac{2n}{3}}$, we have
\begin{align*}
    &\big\|(-\triangle)^{\frac{n-3}{4}}\big(X\times  (-\triangle)^{\frac12}u +  u\times(-\triangle)^{\frac12}X\big) - u\times(-\triangle)^{\frac{n-1}{4} }X \big\|_{L_x^2}\\
    &\lesssim \big\|(-\triangle)^{\frac{n-3}{4}}X\big\|_{L_x^2} \big\| (-\triangle)^{\frac12}u\big\|_{L_x^\infty} + \big\|X\big\|_{L_x^{\frac{2n}{3}}} \big\|(-\triangle)^{\frac{n-1}{4} }u\big\|_{L_x^{\frac{2n}{n-3}}} \\
     & + \big\|(-\triangle)^{\frac{n-3}{4}}u\big\|_{L_x^{\frac{2n}{n-5}}} \big\|(-\triangle)^{\frac12}X\big\|_{L_x^{\frac{2n}{5}}}\\
    &\lesssim C(\|u\|) \  \big\|(-\triangle)^{\frac{n-3}{4}}X\big\|_{L_x^2}^2.
    \end{align*} 
    where $C(\|u\|)$ is a constant depending on the various Strichartz norms of $u$.
   
    Similarly, we also have
    \begin{align*}
        \big\|u\times(-\triangle)^{\frac{n-1}{4}}X  - (-\triangle)^{\frac14}\big(u\times(-\triangle)^{\frac{n-2}{4}}X\big)\big\|_{L_x^2}
         \lesssim \|\nabla_x u\|_{L_x^\infty}\|(-\triangle)^{\frac{n-3}{4}}X\|_{L_x^2}.
    \end{align*}

On account of the half-wave equation \eqref{eq:halfwm}, we have 
$$
\int_{\R^n}(-\triangle)^{\frac14}\big(u\times(-\triangle)^{\frac{n-2}{4}}X\big)\cdot (-\triangle)^{\frac{n-3}{4}}X\ dx =0.
$$

 Hence, for the first term of \eqref{eq:technical10}, we have
    \begin{align*}
        &\int_{\R^n}(-\triangle)^{\frac{n-3}{4}}\big(X\times  (-\triangle)^{\frac12}u +  u\times(-\triangle)^{\frac12}X\big)\cdot (-\triangle)^{\frac{n-3}{4}}X\,dx \\
        &\lesssim \int_{\R^n}\left((-\triangle)^{\frac{n-3}{4}}\big(X\times  (-\triangle)^{\frac12}u +  u\times(-\triangle)^{\frac12}X \big) -u\times(-\triangle)^{\frac{n-1}{4}}X \right)\cdot (-\triangle)^{\frac{n-3}{4}}X\,dx \\
        & +\|\nabla_x u\|_{L_x^\infty}\|(-\triangle)^{\frac{n-3}{4}}X\|^2_{L_x^2} \\
        &\lesssim C(\|u\|) \|(-\triangle)^{\frac{n-3}{4}}X\|^2_{L_x^2}
        \end{align*}

        For the second term of \eqref{eq:technical10}, we estimate it similarly. We deduce that 
        \begin{align*}
            &\big\|(-\triangle)^{\frac{n-3}{4}}\big(u\big(X\cdot(u\times  (-\triangle)^{\frac12}u + u_t)\big)\big\|_{L_x^2}\\
            &\lesssim \big\|(-\triangle)^{\frac{n-3}{4}} X \big\|_{L_x^2} \ \|\Lah u\|_{L_x^2} \ \|u\|_{L_x^\infty} + \big\|(-\triangle)^{\frac{n-3}{4}} X \big\|_{L_x^2} \|\partial_t u\|_{L_x^\infty}\\
            &\lesssim C(\|u\|) \ \big\|(-\triangle)^{\frac{n-3}{4}} X \big\|_{L_x^2}.
        \end{align*}

        We conclude the second term of  \eqref{eq:technical10} by 
        \begin{align*}
        &\big| \int_{\R^n}(-\triangle)^{\frac{n}{4} - \frac{3}{4}}\big(u\big(X\cdot(u\times  (-\triangle)^{\frac12}u + u_t)\big)\big)\cdot(-\triangle)^{\frac{n}{4} - \frac{3}{4}}X\,dx\big|\\
        &\lesssim C(\|u\|)\big\|(-\triangle)^{\frac{n-3}{4}}X\big\|_{L_x^2}^2.
        \end{align*}

        Combining these two estimates, we know \eqref{eq:technical10} satisfies
        \[
        \frac{d}{dt}\tilde{E}(t)\leq C(u)\ \tilde{E}(t).
        \]

        Since $\tilde{E}(0) = 0$, we have $\tilde{E}(t) = 0$ for all $t$. It further implies that $X \equiv 0$. So $u$ solves the original half-wave equation \eqref{eq:halfwm}. Hence we proved the Theorem \ref{thm:s2}.

    \end{proof}

\section{Global Well-Posedness With $\Hy$ target}

 We now extend our result to the $\Hy$ target. Comparing the $S^2$ target discussed above, the obstacles here are that $\Hy$ is a non-compact manifold in $\R^3$ and its Lorentzian geometry structure. Instead of consider $\dot{H}^{\frac{n}{2}}$ initial data, we consider the critical $\besov$ initial data which embeds into $L^\infty$. Thus the solution stays in a compact subset of $\Hy$. We introduce the following theorem for the hyperbolic Lorentzian geometry to reduce the geometric constraint of $\Hy$.

\begin{theorem}[Theorem 2.61 in \cite{BahouriHajer2011Faan}]
    \label{thm:funcaction}
    Let $f$ be a smooth function on $\R$, which vanishes at 0. For a Besov space $\dot{B}^s_{p,q}$
    where $s>0$ and $p,q \in [1,\infty]$ satisfies
    $$ s < \frac{n}{p}, \ \text{or} \ s=\frac{n}{p}\ \text{and}\ q=1.$$
    For any real-valued function $u \in \dot{B}^{s}_{p,q} \cap L^{\infty}$, the function $f\circ u$ belongs to the same space i.e.
    \begin{equation}
        \|f \circ u\|_{\dot{B}^{s}_{p,q}} \le C(f', \|u\|_{L^{\infty}}) \ \|u\|_{\dot{B}^{s}_{p,q}}
    \end{equation}
\end{theorem}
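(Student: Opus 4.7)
The plan is to prove this via Bony's paradifferential calculus. First I would use the Littlewood--Paley decomposition $u = \sum_{j \in \Z} P_j u$, set $S_j u := \sum_{k<j} P_k u$, and exploit $f(0) = 0$ to obtain the telescoping identity
\begin{equation*}
f(u) \;=\; \sum_{j \in \Z}\bigl[f(S_{j+1}u) - f(S_j u)\bigr] \;=\; \sum_{j \in \Z} m_j\, P_j u,
\end{equation*}
where $m_j := \int_0^1 f'\bigl(S_j u + t\,P_j u\bigr)\,dt$ by the fundamental theorem of calculus. Since $u \in L^\infty$ and $f' \in C^\infty(\R)$, each multiplier $m_j$ is uniformly bounded in $L^\infty$ by a constant $C(f', \|u\|_{L^\infty})$, which is precisely the constant appearing on the right-hand side of the claimed inequality.

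Next I would estimate $\|P_k(f(u))\|_{L^p}$ for each dyadic $k$ by inserting this representation and performing a paraproduct decomposition (low--high, high--low, high--high). The key observation is that $m_j$ is essentially built out of frequencies $\lesssim 2^j$, so the product $m_j P_j u$ has spectral support in a ball of radius comparable to $2^j$; consequently $P_k(m_j P_j u)$ vanishes unless $j \ge k - O(1)$. Combining this with the $L^\infty$ bound on $m_j$ and Bernstein's inequality yields
\begin{equation*}
\|P_k(f(u))\|_{L^p} \;\lesssim\; C(f',\|u\|_{L^\infty}) \sum_{j \ge k - O(1)} \|P_j u\|_{L^p}.
\end{equation*}
Multiplying by $2^{ks}$ and taking the $\ell^q$-norm in $k$, the weight $2^{-(j-k)s}$ is summable provided $s > 0$, so discrete Young's convolution inequality on $\ell^q$ closes the estimate with the bound $C(f',\|u\|_{L^\infty})\,\|u\|_{\dot{B}^s_{p,q}}$.

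The main obstacle is the endpoint case $s = n/p$ with $q = 1$: in that regime the argument above has no decay to spare in the Young-type summation (the sum over $j \ge k$ with weight $2^{-(j-k)s}$ in $\ell^q$ must be handled sharply), and naive estimates on the derivative of $m_j$ lose regularity. One must invoke the sharp embedding $\dot{B}^{n/p}_{p,1} \hookrightarrow L^\infty$ to transfer the boundedness hypothesis into control of the multiplier, and then use a finer expansion of $f(S_{j+1}u) - f(S_j u)$ with second-order Taylor remainder, exploiting the $\ell^1$-algebra property of $\dot{B}^{n/p}_{p,1}$ to absorb the non-linearity. This is precisely the place where the refined treatment in Bahouri--Chemin--Danchin becomes indispensable; away from this borderline the argument is essentially routine paraproduct bookkeeping.
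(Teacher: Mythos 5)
The paper does not prove this statement --- it is quoted verbatim from Bahouri--Chemin--Danchin (Theorem 2.61) with only a citation --- so there is no internal proof to compare your outline against. Your overall strategy, namely the telescoping $f(u) = \sum_j \bigl(f(S_{j+1}u) - f(S_j u)\bigr)$ exploiting $f(0)=0$, followed by the linearization $f(S_{j+1}u) - f(S_j u) = m_j P_j u$ with $m_j = \int_0^1 f'(S_j u + t P_j u)\,dt$, is the standard Meyer/Bony argument used in that reference, so the broad route is the right one.

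However, the pivotal step is stated in a way that would fail. You claim that $m_j$ is ``essentially built out of frequencies $\lesssim 2^j$, so the product $m_j P_j u$ has spectral support in a ball of radius comparable to $2^j$; consequently $P_k(m_j P_j u)$ vanishes unless $j \geq k - O(1)$.'' Since $f'$ is a general smooth nonlinearity, $m_j$ is \emph{not} band-limited, $m_j P_j u$ has no compact spectral support, and $P_k(m_j P_j u)$ does \emph{not} vanish for $k \gg j$. What actually makes the argument work is a quantitative Meyer multiplier estimate: differentiating under the integral and applying Bernstein to $S_j u$ and $P_j u$ gives $\|\nabla^\alpha m_j\|_{L^\infty} \lesssim C(f,\|u\|_{L^\infty})\, 2^{j|\alpha|}$ for every multi-index $\alpha$. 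For $k \geq j + C$ one then splits $m_j$ into low and high frequency pieces; the low-frequency piece produces a product supported in $|\xi|<2^{k-1}$ and is killed by $P_k$, while the high-frequency tail is controlled by the derivative bounds, yielding $\|P_k(m_j P_j u)\|_{L^p} \lesssim 2^{-M(k-j)} \|P_j u\|_{L^p}$ for any $M$. It is this rapid off-diagonal \emph{decay}, not exact vanishing of $P_k(m_j P_j u)$, that lets the Young-type convolution close; the hypothesis $s>0$ handles the complementary range $j>k$. Your intuition is right but, as written, the step is not a valid inference and needs the derivative estimates on $m_j$ to be repaired.

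Your final paragraph also misidentifies what the hypotheses $s<n/p$, or $s=n/p$ with $q=1$, are for. They are not the source of an endpoint difficulty in the $\ell^q$ summation (which uses only $s>0$ and is identical in both regimes), and no second-order Taylor remainder or $\ell^1$-algebra structure is required. Rather, these conditions are exactly what guarantee, via Bernstein and H\"older on the low frequencies, that $\|S_j u\|_{L^\infty} \to 0$ as $j\to -\infty$; this is what legitimizes the telescoping identity $f(u) = \sum_j (f(S_{j+1}u) - f(S_j u))$ starting from $f(0)=0$. Without it, $f(S_j u)$ need not converge to $0$ and the representation you begin from is not justified --- so the hypotheses bite at the very first line, not at the end.
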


For the distance function of $\Hy$
$$d_H(p,q)= \text{arccosh}(-p \cdot_{\eta} q) = \ln (-p \cdot_{\eta} q+\sqrt{(p \cdot_{\eta} q)^2-1}),$$
we can smoothly extend $\text{arccosh}(x):= \ln (x + \sqrt{x^2-1})$ defined on $x\in [1,\infty)$ to a function vanish at 0 in order to meet the criteria of Theorem \ref{thm:funcaction}. 
Therefore, we know that for $-u \cdot_\eta Q \in \besov$, we have $d_H(u,Q) \in \besov$. As $Q$ is a priori fixed point, by the triangle inequality, it suffices to show
$$u^i\in \besov \ \text{for} i=0,1,2$$

Note here, we view $u=(u^1,u^2,u^3)$ as a $\R^3$ vector without the hyperbolic plane constraint.  \par
With this relaxation from the hyperbolic constraint, the proof Theorem \ref{thm:h2} is similar to the works of \cite{krieger2017small} and \cite{2019arXiv190412709K} and also the $S^2$ case above.

Firstly, we reformulate \eqref{eq:halfwm} into 
\begin{align}
    \label{eq:waveh2}
    \begin{split}
    \Box u = (\partial^2_t - \Delta) u =
    &u \times_{\eta} \Lah \left(u \times_{\eta} \Lah u \right) - u \times_{\eta} \left(u \times_{\eta} (-\Delta) u \right) -\left(\nabla u \cdot_{\eta} \nabla u\right) u   \\
    =&(\partial_t u \cdot_{\eta} \partial_t u - \nabla u \cdot_{\eta} \nabla u) u  \\
    &+\Pi_{u_{\perp}}\left[\ (u \cdot_{\eta} \Lah u )\ \Lah u \right]   \\
    &+\Pi_{u_{\perp}} \left[u \times_{\eta} \Lah \left(u \times_{\eta} \Lah u \right) - u \times_{\eta} \left(u \times_{\eta} (-\Delta) u \right)\right] 
    \end{split}  
\end{align}
as in Section \ref{sec:transform}. Then we have a multilinear estimate analogous to \cite{2019arXiv190412709K}.
\begin{Proposition}
    \label{prop:multilinear_h2}
     For $u$ as in Theorem \ref{thm:h2}, and some $\sigma>0$, we have the bounds 
    \begin{equation}
        \label{eq:multilin1}
    \big\|P_k\big[u( \partial_t u \cdot_\eta \partial_t u -\nabla u \cdot_\eta \nabla u )\big]\big\|_{N}\lesssim (1+\big\|u\big\|_{S})\big\|u\big\|_{S}\big(\sum_{k_1\in Z}2^{-\sigma|k-k_1|}\big\|P_{k_1}u\big\|_{S_{k_1}}\big)
    \end{equation}
    Furthermore, for $\tilde{u}$ maps into a small neighborhood of $\Hy$, we also have 
    \begin{equation}\label{eq:multilin2}
    \big\|P_k\big(\Pi_{\tilde{u}}\big((-\triangle)^{\frac{1}{2}}u\big)(u\cdot_\eta (-\triangle)^{\frac{1}{2}}u)\big)\big\|_{N}\lesssim \prod_{v = u,\tilde{u}} (1+\big\|v\big\|_{S})\big\|u\big\|_{S}\big(\sum_{k_1\in Z}2^{-\sigma|k-k_1|}\big\|P_{k_1}u\big\|_{S_{k_1}}\big)
    \end{equation}
    and
    \begin{equation}\label{eq:multilin3}\begin{split}
    &\big\|P_k\big(\Pi_{\tilde{u}}\big[u \times_\eta (-\triangle)^{\frac{1}{2}}(u \times_\eta (-\triangle)^{\frac{1}{2}}u) - u \times_\eta(u \times_\eta (-\triangle)u)\big]\big)\big\|_{N}\\
    &\lesssim \prod_{v = u,\tilde{u}} (1+\big\|v\big\|_{S})\big\|u\big\|_{S}\big(\sum_{k_1\in Z}2^{-\sigma|k-k_1|}\big\|P_{k_1}u\big\|_{S_{k_1}}\big).
    \end{split}\end{equation}
    Using the notation 
    \[
    \triangle_{1,2}F^{(j)} := F^{(1)} - F^{(2)},
    \]
    we have the difference estimates: 
    \begin{equation}\label{eq:multilin4}\begin{split}
    &\big\|\triangle_{1,2}P_k\big[u^{(j)}(\nabla u^{(j)}\cdot_\eta\nabla u^{(j)} - \partial_t u^{(j)}\cdot_\eta\partial_t u^{(j)})\big]\big\|_{N}\\&\lesssim (1+\max_j\big\|u^{(j)}\big\|_{S})(\max_j\big\|u^{(j)}\big\|_{S})\big(\sum_{k_1\in Z}2^{-\sigma|k-k_1|}\big\|P_{k_1}u^{(1)} - P_ku^{(2)}\big\|_{S_{k_1}}\big)\\
    & + (1+\max_j\big\|u^{(j)}\big\|_{S})(\big\|u^{(1)} - u^{(2)}\big\|_{S})\big(\max_j\sum_{k_1\in Z}2^{-\sigma|k-k_1|}\big\|P_{k_1}u^{(j)}\big\|_{S_{k_1}}\big),\\
    \end{split}\end{equation}
    and the two similarly estimates
    \begin{equation}\label{eq:multilin5}\begin{split}
    &\big\|P_k\triangle_{1,2}\big(\Pi_{\tilde{u}^{(j)}_{}}\big((-\triangle)^{\frac{1}{2}}u^{(j)}\big)(u^{(j)}\cdot_\eta (-\triangle)^{\frac{1}{2}}u^{(j)})\big)\big\|_{N}\\
    &\lesssim \max_j\prod_{v = u^{(j)},\tilde{u}^{(j)}} (1+\big\|v\big\|_{S})\big\|u^{(j)}\big\|_{S}\big(\sum_{k_1\in Z}2^{-\sigma|k-k_1|}\big\|P_{k_1}u^{(1)} - P_{k_2}u^{(2)} \big\|_{S_{k_1}}\big)\\
    & + \max_j\prod_{v = u^{(j)},\tilde{u}^{(j)}} (1+\big\|v\big\|_{S})\big\|u^{(1)}- u^{(2)}\big\|_{S}\big(\max_j\sum_{k_1\in Z}2^{-\sigma|k-k_1|}\big\|P_{k_1}u^{(j)}\big\|_{S_{k_1}}\big)\\
    & +  \max_j(1+\big\|u^{(j)}\big\|_{S})\big\|\tilde{u}^{(1)}- \tilde{u}^{(2)}\big\|_{S}\big(\max_j\sum_{k_1\in Z}2^{-\sigma|k-k_1|}\big\|P_{k_1}u^{(j)}\big\|_{S_{k_1}}\big),\\
    \end{split}
\end{equation}

 \begin{equation}
        \label{eq:multilin6}
        \begin{split}
        &\big\|P_k \triangle_{1,2} \big(\Pi_{\tilde{u}^{(j)}}\big[u^{(j)} \times_\eta (-\triangle)^{\frac{1}{2}}(u^{(j)} \times_\eta (-\triangle)^{\frac{1}{2}}u^{(j)}) - u ^{(j)}\times_\eta(u^{(j)} \times_\eta (-\triangle)u^{(j)})\big]\big)\big\|_{N}\\
        &\lesssim \max_j\prod_{v = u^{(j)},\tilde{u}^{(j)}} (1+\big\|v\big\|_{S})\big\|u^{(j)}\big\|_{S}\big(\sum_{k_1\in Z}2^{-\sigma|k-k_1|}\big\|P_{k_1}u^{(1)} - P_{k_2}u^{(2)} \big\|_{S_{k_1}}\big)\\
    & + \max_j\prod_{v = u^{(j)},\tilde{u}^{(j)}} (1+\big\|v\big\|_{S})\big\|u^{(1)}- u^{(2)}\big\|_{S}\big(\max_j\sum_{k_1\in Z}2^{-\sigma|k-k_1|}\big\|P_{k_1}u^{(j)}\big\|_{S_{k_1}}\big)\\
    & +  \max_j(1+\big\|u^{(j)}\big\|_{S})\big\|\tilde{u}^{(1)}- \tilde{u}^{(2)}\big\|_{S}\big(\max_j\sum_{k_1\in Z}2^{-\sigma|k-k_1|}\big\|P_{k_1}u^{(j)}\big\|_{S_{k_1}}\big).\\
        \end{split}
    \end{equation}

    \end{Proposition}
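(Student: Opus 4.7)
The plan is to follow the scheme used for the $S^2$ target in \cite{krieger2017small,2019arXiv190412709K} and for the sub-critical estimates of Proposition \ref{prop:locmultilinear_s2}, adapting each step to the Lorentzian-signature products $\cdot_\eta$ and $\times_\eta$. Since $\besov \hookrightarrow L^\infty$, every $u$ under consideration satisfies $\|u\|_{L_{t,x}^\infty} \lesssim 1 + \|u\|_S$, so the non-compactness of $\Hy$ is not an obstruction at the level of these estimates: the bilinear form $a \cdot_\eta b = (\eta a^0)b^0 + a^1 b^1 + a^2 b^2$ is just a signed Euclidean inner product and the multiplication-composition lemmas, Strichartz bounds, and the $X^{s,\theta}$ apparatus apply componentwise.

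For \eqref{eq:multilin1}, the wave-type term $u(\partial_t u \cdot_\eta \partial_t u - \nabla u \cdot_\eta \nabla u)$ is handled by paraproduct decomposition into high–high, high–low, and low–high interactions. The worst high–high piece is placed in $L_t^1 \dot H_x^{n/2-1}$ by putting both derivative factors in $L_t^2 L_x^4$ (using admissibility for $n \geq 4$) and $u$ itself in $L_t^\infty L_x^\infty$ via the Besov embedding; the high–low and low–high cases dualize a derivative factor into $L_t^2 L_x^\infty$ against the other in $L_t^\infty L_x^2$, producing the $2^{-\sigma|k-k_1|}$ off-diagonal decay. For \eqref{eq:multilin2} and \eqref{eq:multilin3}, the first move is to prove the hyperbolic analog of Lemma \ref{lem:multilin2}: decompose $\Pi_{\tilde u^\perp}\bigl(\Lah u\bigr)$ using a Besov-valued $L^\infty$ bound on $\tilde u/\|\tilde u\|_\eta$ (with the hyperbolic normalization), obtaining $\big\|P_k \Pi_{\tilde u^\perp}(\Lah u)\big\|_S \lesssim 2^k (1+\|\tilde u\|_S)\|u\|_S$. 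With this in hand, the argument splits the inner product $u \cdot_\eta \Lah u$ and the triple cross-product expression via paraproducts exactly as in Section \ref{sec:global}, invoking \eqref{singlefreq}–\eqref{singlefreq2} of Lemma \ref{lem:multilinestimates} to trade a non-local $\Lah$ for a derivative acting on a single factor in the low–low frequency regimes.

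The one structural point that needs adjustment is the $S^2$ cancellation \eqref{eq:orthomicro}. For $\Hy$ we use $u \cdot_\eta u = -1 = Q \cdot_\eta Q$, which gives
\begin{equation*}
0 = u\cdot_\eta u - Q\cdot_\eta Q = \sum_{|k_1-k_2|\leq 10} u_{k_1}\cdot_\eta u_{k_2} + 2\sum_{k_1} u_{k_1}\cdot_\eta u_{<k_1-10},
\end{equation*}
the identical identity modulo the signature. This is what lets us replace $P_{k_2}(u_{<k_2-10} \cdot_\eta u_{k_2})$ by $-\tfrac12 P_{k_2}(u_{>k_2+10}\cdot_\eta u_{>k_2+10})$ in the delicate intermediate-frequency pieces of the triple cross-product term, recovering the same summable gains.

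The difference estimates \eqref{eq:multilin4}–\eqref{eq:multilin6} follow by telescoping each expression in the three arguments $u^{(j)}, u^{(j)}, \tilde u^{(j)}$ and applying the linear-in-one-factor versions of the estimates just proved; no new cancellation is required, only bookkeeping. The main obstacle is therefore the projector estimate: carrying through the analog of Lemma \ref{lem:multilin2} for $\Pi_{\tilde u^\perp}$ defined via the indefinite inner product $\cdot_\eta$, where one must verify that $\tilde u /(\tilde u \cdot_\eta \tilde u)$ remains bounded in $L_{t,x}^\infty$ under the smallness hypothesis \eqref{con:init_hyper}. Given the $\besov \hookrightarrow L^\infty$ bound and the fact that $\tilde u$ maps into a small neighborhood of $\Hy$ (where $-\tilde u\cdot_\eta \tilde u \geq 1/2$, say), this boundedness is automatic, and the remainder of the proof is a routine paraproduct exercise that we omit, in parallel with the treatment of Proposition \ref{prop:locmultilinear_s2} deferred to the Appendix.
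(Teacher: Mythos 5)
Your proposal follows exactly the route the paper takes: it proves the proposition by analogy with the $S^2$ multilinear estimates, supplying the two adaptations that make the analogy go through — the $\besov\hookrightarrow L^\infty$ embedding supplying the pointwise bound lost when leaving $\dot H^{n/2}$, and the hyperbolic identity $u\cdot_\eta u = -1 = Q\cdot_\eta Q$ replacing the $S^2$ cancellation \eqref{eq:orthomicro} in the delicate intermediate-frequency cases. Your identification of the $\Hy$-projector normalization as the one point requiring care (and that it is harmless since $-\tilde u\cdot_\eta\tilde u\ge 1/2$ on a small neighborhood of $\Hy$) matches the substance of Lemma~\ref{lem:multilin2} carried over to the Lorentzian inner product; aside from the minor slip of writing $\tilde u/(\tilde u\cdot_\eta\tilde u)$ for what should be $\tilde u/\sqrt{-\tilde u\cdot_\eta\tilde u}$, this is the same argument the paper invokes by reference.
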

The proof of this Proposition is analogous to the $S^2$ case in Kiesenhofer-Krieger\cite{2019arXiv190412709K} and the proof in Section \ref{sec:error} above, so we omit the proof here. \par
With the multilinear estimate, we can then simply follow the iteration arguments to show there is a smooth solution to the \eqref{eq:waveh2} and then prove the solution solve the original \eqref{eq:halfwm} as in Section \ref{sec:sol}  to conclude Theorem \ref{thm:h2}.

 \clearpage
\appendix
\section{Proof of Proposition \ref{prop:locmultilinear_s2}}
\label{app:proof}
In this section, without further specification, all the Strichartz norms are defined on $[0,T]\times \R^n$. \par

{\it{Proof of \eqref{eq:locmultilin1_s2}}}:
\par The first term is the standard wave equation term. We refer the reader to the 
    works by Klainerman-Machedon \cite{klainerman1996smoothing}, \cite{klainerman1996estimates}, Klainerman-Selberg \cite{klainerman1997remark} and Tataru \cite{Tat3} \cite{tataru2005rough} for more details.
    For the completeness of this work, we also include the estimate here. We use the Littlewood-Paley theory to localize the derivative terms to frequency $2^{k_1}$ and $2^{k_2}$, and consider the different cases separately. 
\par

{\it{(1): high high interactions $\max\{k_1, k_2\}>k+10$.}}
We further split this case relative to the difference between $k_1,k_2$.
\begin{equation*}
    \begin{split}
    &P_k[u\nabla_{t,x}u_{k_1}\nabla_{t,x}u_{k_2}] \\
    &=\sum_{\substack{|k_1-k_2| \leq 10 \\ k_1>k+10}}P_k[u\nabla_{t,x}u_{k_1}\nabla_{t,x}u_{k_2}]
     +\sum_{\substack{k_2 > k_1+10 \\ k_2>k+10}}P_k[u\nabla_{t,x}u_{k_1}\nabla_{t,x}u_{k_2}] \\
    &\quad +\sum_{\substack{k_1 > k_2+10 \\ k_1>k+10}}P_k[u\nabla_{t,x}u_{k_1}\nabla_{t,x}u_{k_2}].
    \end{split}
\end{equation*}

For $\gamma>0$, by the H\"older inequality, we have 
$$\|u\|_{L_t^2([0,T])} \leq T^{\frac{1}{2}-\frac{1}{\gamma}} \|u\|_{L_t^{2+\gamma}([0,T])}.$$

 For a wave admissible pair $(p,q)$, $(p+\gamma,q)$ is wave admissable as well. We can further assume that $s-\frac{n}{2} \leq \frac{\gamma}{4(2+\gamma)}$ which will insure the sum over the frequencies of $k_1, k_2$ converge to a constant. Hence we get
\begin{align*}
&2^{k(s-1)} \big\|\sum_{\substack{k_1 = k_2+O(1) \\ k_1>k+10}} P_k[u\nabla_{t,x}u_{k_1}\nabla_{t,x}u_{k_2}]\big\|_{L_t^1 L_x^2} \\
&\lesssim 2^{k(s-1)} T^{\frac{\gamma}{2(2+\gamma)}} \big\|\nabla_{t,x}u_{k_1}\big\|_{L_t^{2+\gamma} L_x^4}\big\|\nabla_{t,x}u_{k_2}\big\|_{L_t^2 L_x^4}\\
&\lesssim T^{\frac{\gamma}{2(2+\gamma)}} \sum_{k_1>k+10} 2^{(\frac{n}{2}-s+\frac{\gamma}{2(2+\gamma)})k_1} 2^{(s-1)(k-k_1)} \|u_{k_1}\|_{S_{loc}}^2 \\
&\lesssim T^{\frac{\gamma}{2(2+\gamma)}} \|u\|_{S_{loc}}^2.
\end{align*}

For $k_2>k_1+10$, we have

\begin{align*}
&2^{k(s-1)} \big\|\sum_{\substack{k_2 > k_1+10 \\ k_2>k+10}}P_k[u\nabla_{t,x}u_{k_1}\nabla_{t,x}u_{k_2}]\big\|_{L_t^1 L_x^2} \\
 &\simeq 2^{k(s-1)}\big\|\sum_{\substack{k_2 > k_1+10 \\ k_2>k+10}}P_k[u_{k_2} \nabla_{t,x}u_{k_1}\nabla_{t,x}u_{k_2}]\big\|_{L_t^1 L_x^2}\\
&\lesssim 2^{k(s-1)} \sum_{\substack{k_2 > k_1+10 \\ k_2>k+10}} \big\|u_{k_2}\big\|_{L_t^{2+\gamma} L_x^4}\big\|\nabla_{t,x}u_{k_2}\big\|_{L_t^2 L_x^4}\big\|\nabla_{t,x}u_{k_1}\big\|_{L_t^\infty L_x^2+L_{t,x}^\infty}\\
&\lesssim  T^{\frac{\gamma}{2(2+\gamma)}} \|u\|_{S_{loc}} \sum_{k_2>k+10} 2^{(k-k_2)(s-1)} 2^{(-\frac{4+\gamma}{2(2+\gamma)}+\frac{n}{2}-s)k_2} \|u_{k_2}\|_{S_{loc}}^2 \\
&\lesssim T^{\frac{\gamma}{2(2+\gamma)}} \|u\|_{S_{loc}}^3.
\end{align*}

The case $k_1>k_2+10$ is symmetrical, we omit here.  
\\

{\it{(2): high low interactions $\max\{k_1, k_2\}<k-10$.}} 
This is similar to the estimate we showed above. Firstly, we localize the frequency of the first factor $u$ to $\sim 2^k$, and estimate it by the $L_t^\infty L_x^2$ norm. Then we put the rest two terms $\nabla_{t,x}u_{k_j}$, $j = 1,2$ into $L_t^2 L_x^\infty$ and  $L_t^{2+\gamma} L_x^\infty$ respectably to conclude the result. For instance, 
\begin{align*}
    &2^{k(s-1)}\big\|\sum_{\substack{k_1 < k-10 \\ |k_1-k_2|<10}}P_k[u_{k} \nabla_{t,x}u_{k_1}\nabla_{t,x}u_{k_2}]\big\|_{L_t^1 L_x^2}\\
    &\lesssim 2^{k(s-1)} \| u_k\|_{L_t^\infty L_x^2} \sum_{\substack{k_1 < k-10 \\ |k_1-k_2|<10}} \|\nabla u_{k_1}\|_{L_t^2 L_x^\infty} \| \nabla u_{k_2}\|_{L_t^{2+\gamma} L_x^\infty} \\
    &\lesssim T^{\frac{\gamma}{2(2+\gamma)}} \|u\|_{S_{loc}} \sum_{k_1 < k-10} 2^{k_1-k} 2^{(\frac{\gamma}{2(2+\gamma)}+n-2s)k_1} \|u_{k_1}\|_{S_{loc}}^2 \\
    &\lesssim  T^{\frac{\gamma}{2(2+\gamma)}} \|u\|_{S_{loc}}^3.
\end{align*}

The rest cases follow similarly. \\

{\it{(3): low high interactions $\max\{k_1, k_2\}\in [k-10,k+10]$.}} We only need to consider the case when $k_1<k_2-10$, other cases will be similar to {\it{(1)}}. We separate it into
\begin{align}
    &2^{k(s-1)} \big\|\sum_{\substack{k_2 \in [k-10,k+10] \\ k_1<k_2-10}}  P_k[u\nabla_{t,x}u_{k_1}\nabla_{t,x}u_{k_2}]\big\|_{L_t^1 L_x^2}  \nonumber \\
    &=2^{k(s-1)} \big\|\sum_{\substack{k_2 \in [k-10,k+10] \\ k_1<k_2-10}}  P_k[u_{\geq k_1-10}\nabla_{t,x}u_{k_1}\nabla_{t,x}u_{k_2}]\big\|_{L_t^1 L_x^2}  \label{eq:loc1.3.1}\\
    &+ 2^{k(s-1)} \big\|\sum_{\substack{k_2 \in [k-10,k+10] \\ k_1<k_2-10}}  P_k[u_{< k_1-10}\nabla_{t,x}u_{k_1}\nabla_{t,x}u_{k_2}]\big\|_{L_t^1 L_x^2} \label{eq:loc1.3.2}
\end{align}

For \eqref{eq:loc1.3.1}, we can easily derive
\begin{align*}
&2^{k(s-1)} \big\|\sum_{\substack{k_2 \in [k-10,k+10] \\ k_1<k_2-10}}  P_k[u_{\geq k_1-10}\nabla_{t,x}u_{k_1}\nabla_{t,x}u_{k_2}]\big\|_{L_t^1 L_x^2} \\
&\lesssim 2^{k(s-1)} T^{\frac{\gamma}{2(2+\gamma)}} \sum_{\substack{k_2 \in [k-10,k+10] \\ k_1<k_2-10}} \big\|u_{\geq k_1-10}\big\|_{L_t^2 L_x^\infty}\big\|\nabla_{t,x}u_{k_1}\big\|_{L_t^{2+\gamma} L_x^\infty}\big\|\nabla_{t,x}u_{k_2}\big\|_{L_t^\infty L_x^2}\\
&\lesssim T^{\frac{\gamma}{2(2+\gamma)}} \|u_k\|_{S_{loc}} \sum_{\substack{k_3 >k_1-10 \\ k_1<k_2-10}} 2^{\frac{k_1-k_3}{2}} 2^{\frac{\gamma}{2(2+\gamma)}k_1} 2^{(\frac{n}{2}-s)(k_1+k_3)} \|u_{k_1}\|_{S_{loc}} \|u_{k_3}\|_{S_{loc}} \\
&\lesssim T^{\frac{\gamma}{2(2+\gamma)}} \big\|u\big\|^3_{S_{loc}}.
\end{align*}

For the second term \eqref{eq:loc1.3.2}, we need to consider the hyperbolic Sobolev space norm.
We divide the case by the time-space projector $Q_j$ as 
\begin{align}
    &P_k[u_{< k_1-10}\nabla_{t,x}u_{k_1}\nabla_{t,x}u_{k_2}] \nonumber \\
    &=  P_k[Q_{\geq k_1-10}u_{<k_1-10}\nabla_{t,x}u_{k_1}\nabla_{t,x}u_{k_2}] \label{eq:q1.2.1}\\
    &+P_k[Q_{<k_1-10}u_{<k_1-10}\partial_{\alpha}u_{k_1}\partial^{\alpha}u_{k_2}]. \label{eq:q1.2.2}
\end{align}

We estimate the first term \eqref{eq:q1.2.1} as above. Then we left the most delicate term \eqref{eq:q1.2.2}, which we need to utilize the wave equation's null structure. Here we further split into
\begin{align}
    &P_k[Q_{<k_1-10}u_{<k_1-10}\partial_{\alpha}u_{k_1}\partial^{\alpha}u_{k_2}] \nonumber \\
    &=P_k[Q_{<k_1-10}u_{<k_1-10}\partial_{\alpha}u_{k_1}\partial^{\alpha}Q_{>k_1-10}u_{k_2}]  \label{eq:null1.2.1}\\
    &+P_k[Q_{<k_1-10}u_{<k_1-10}\partial_{\alpha}Q_{\geq k_1+10}u_{k_1}\partial^{\alpha}Q_{<k_1-10}u_{k_2}]\label{eq:null1.2.2}\\
    &+P_k[Q_{<k_1-10}u_{<k_1-10}\partial_{\alpha}Q_{<k_1+10}u_{k_1}\partial^{\alpha}Q_{<k_1-10}u_{k_2}]\label{eq:null1.2.3}.
\end{align}
    
The first term \eqref{eq:null1.2.1} can be treated as usual,
\begin{align*}
&2^{k(s-1)}\big\|\sum_{\substack{k_2 \in [k-10,k+10] \\ k_1<k_2-10}}P_k[Q_{<k_1-10}u_{<k_1-10}\partial_{\alpha}u_{k_1}\partial^{\alpha}Q_{>k_1-10}u_{k_2}]\big\|_{L_t^1 L_x^2} \\
&\lesssim 2^{k(s-1)} \sum_{\substack{k_2 \in [k-10,k+10] \\ k_1<k_2-10}}\big\|\partial_{\alpha}u_{k_1}\big\|_{L_t^{2+\gamma} L_x^\infty}\big\|\partial^{\alpha}Q_{>k_1-10}u_{k_2}\big\|_{L_{t,x}^2}\\
&\lesssim T^{\frac{\gamma}{2(2+\gamma)}}\sum_{\substack{j >k_1-10 \\ k_1<k_2-10}} 2^{j(\frac{n}{2}-s-\frac{1}{2})} 2^{(1-\frac{1}{2+\gamma}+\frac{n}{2}-s)k_1} \|u_{k_1}\|_{S_{loc}} \|u_{k_2}\|_{S_{loc}} \\
&\lesssim T^{\frac{\gamma}{2(2+\gamma)}} \big\|u\big\|^2_{S_{loc}}.
\end{align*}

For the second term \eqref{eq:null1.2.2}, we use the $\dot{X}^{s-1,\theta-1}$ norm to estimate it. In particular, we rely on \eqref{eq:loc_xsb_es} to deal with the localization factor $\varphi_T$. Thus for some $\beta>0$,  we have

\begin{align*}
& \sum_{\substack{k_2 \in [k-10,k+10] \\ k_1<k_2-10}}\big\|\varphi_T P_k[Q_{<k_1-10}u_{<k_1-10}\partial_{\alpha}Q_{\geq k_1+10}u_{k_1}\partial^{\alpha}Q_{<k_1-10}u_{k_2}]\big\|_{\dot{X}^{s-1,\theta-1}}\\
&\lesssim T^{\frac{1}{2}-\theta} \ \sum_{\substack{k_2 \in [k-10,k+10] \\ k_1<k_2-10}}\big\| P_k[Q_{<k_1-10}u_{<k_1-10}\partial_{\alpha}Q_{\geq k_1+10}u_{k_1}\partial^{\alpha}Q_{<k_1-10}u_{k_2}]\big\|_{\dot{X}^{s-1,\theta-1}}\\
&\lesssim T^{\beta} 2^{k(s-1)} \sum_{\substack{k_2 \in [k-10,k+10] \\ k_1<k_2-10}} \sum_{j\geq k_1+10}2^{(s-\frac{n}{2}-\frac{1}{2})j}\big\|Q_j\nabla_{t,x}u_{k_1}\big\|_{L_t^{2+\gamma} L_x^\infty}\big\|\partial^{\alpha}Q_{<k_1-10}u_{k_2}\big\|_{L_t^\infty L_x^2}\\
&\lesssim T^{\beta} \|u\|_{S_{loc}}\sum_{\substack{k_1<k_2-10 \\j\geq k_1+10}} 2^{(\frac{\gamma}{2(2+\gamma)}+n-2s)k_1} \|u_{k_1}\|_{S_{loc}} \\
&\lesssim T^{\beta} \|u\|_{S_{loc}}^2.
\end{align*}

Finally, for \eqref{eq:null1.2.3}, we need to use the null structure to isolate difficult parts further. We write 
\begin{align}
&2P_k[Q_{<k_1-10}u_{<k_1-10}\partial_{\alpha}Q_{<k_1+10}u_{k_1}\partial^{\alpha}Q_{<k_1-10}u_{k_2}] \nonumber\\
&= P_k[Q_{<k_1-10}u_{<k_1-10}\Box\big(Q_{<k_1+10}u_{k_1}Q_{<k_1-10}u_{k_2}\big)] \label{eq:xb1}\\
& - P_k[Q_{<k_1-10}u_{<k_1-10}\Box Q_{<k_1+10}u_{k_1}Q_{<k_1-10}u_{k_2}]\label{eq:xb2}\\
& - P_k[Q_{<k_1-10}u_{<k_1-10}Q_{<k_1+10}u_{k_1}\Box Q_{<k_1-10}u_{k_2}]\label{eq:xb3}.
\end{align}

Then we consider each one of them individually. For the \eqref{eq:xb1}, we have 
\begin{align}
&P_k[Q_{<k_1-10}u_{<k_1-10}\Box\big(Q_{<k_1+10}u_{k_1}Q_{<k_1-10}u_{k_2}\big)] \nonumber \\
& = \Box P_k[Q_{<k_1-10}u_{<k_1-10}\big(Q_{<k_1+10}u_{k_1}Q_{<k_1-10}u_{k_2}\big)]
\label{eq:1.3q1}\\
& - P_k[\nabla_{t,x}Q_{<k_1-10}u_{<k_1-10}\nabla_{t,x}\big(Q_{<k_1+10}u_{k_1}Q_{<k_1-10}u_{k_2}\big)] \label{eq:1.3q2}\\
& - P_k[\nabla_{t,x}^2Q_{<k_1-10}u_{<k_1-10}\big(Q_{<k_1+10}u_{k_1}Q_{<k_1-10}u_{k_2}\big)] \label{eq:1.3q3}
\end{align}

We treat the most difficult \eqref{eq:1.3q1} as following

\begin{align*}
&\sum_{\substack{k_2 \in [k-10,k+10] \\ k_1<k_2-10}}\big\| \varphi_T  \Box P_k[Q_{<k_1-10}u_{<k_1-10}\big(Q_{<k_1+10}u_{k_1}Q_{<k_1-10}u_{k_2}\big)]\big\|_{\dot{X}^{s-1,\theta-1}}\\
&\lesssim T^{\frac{n}{2}-s} \sum_{\substack{k_2 \in [k-10,k+10] \\ k_1<k_2-10}}\big\| P_k[Q_{<k_1-10}u_{<k_1-10}\big(Q_{<k_1+10}u_{k_1}Q_{<k_1-10}u_{k_2}\big)]\big\|_{\dot{X}^{s,\theta}} \\
&\lesssim T^{\frac{n}{2}-s} 2^{sk} \sum_{\substack{k_2 \in [k-10,k+10] \\ k_1<k_2-10}}  \sum_{j<k_1+20} 2^{(s-\frac{n}{2}+\frac{1}{2})j} \big\|P_kQ_j[Q_{<k_1-10}u_{<k_1-10} \\
&\big(Q_{<k_1+10}u_{k_1}Q_{<k_1-10}u_{k_2}\big)]\big\|_{L_{t,x}^2}\\
&\lesssim 2^{sk} T^{\beta} \sum_{\substack{k_2 \in [k-10,k+10] \\ k_1<k_2-10}} \sum_{j<k_1+20}2^{(s-\frac{n}{2}+\frac{1}{2})j} \big\|Q_{<k_1+10}u_{k_1}\big\|_{L_t^{2+\gamma} L_x^\infty} \big\|Q_{<k_1-10}u_{k_2}\big\|_{L_t^\infty L_x^2}\\
&\lesssim  T^{\beta} \|u\|_{S_{loc}} \sum_{ \substack{k_1<k_2-10 \\ j<k_1+20}} 2^{(s-\frac{n}{2}+\frac{1}{2})j} 2^{(-\frac{1}{2+\gamma} +\frac{n}{2}-s)k_1 }\big\|u_{k_1}\big\|_{S_{loc}} \\ 
&\lesssim T^{\beta} \big\|u\big\|^2_{S_{loc}}. 
\end{align*}

The rest two terms \eqref{eq:1.3q2} and \eqref{eq:1.3q3} can be estimated by the standard Strichartz norm as before. \par
Next, we move to the \eqref{eq:xb2}. We have
\begin{align*}
&2^{k(s-1)} \sum_{\substack{k_2 \in [k-10,k+10] \\ k_1<k_2-10}} \big\|P_k[Q_{<k_1-10}u_{<k_1-10}\Box Q_{<k_1+10}u_{k_1}Q_{<k_1-10}u_{k_2}]\big\|_{L_t^1 L_x^2}\\
&\lesssim 2^{k(s-1)} \sum_{\substack{k_2 \in [k-10,k+10] \\ k_1<k_2-10}}\big\|\Box Q_{<k_1+10}u_{k_1}\big\|_{L_t^{2+\gamma} L_x^4}\big\|Q_{<k_1-10}u_{k_2}\big\|_{L_t^2 L_x^4}\\
&\lesssim T^{\frac{\gamma}{2(2+\gamma)}} \|u\|_{S_{loc}} \sum_{k_1<k-10} 2^{\frac{3}{2}(k_1-k)} 2^{(\frac{\gamma}{2(2+\gamma)}+\frac{n}{2}-s)k_1} \|u_{k_1}\|_{S_{loc}} \\
&\lesssim T^{\frac{\gamma}{2(2+\gamma)}} \|u\|_{S_{loc}}^2
\end{align*}

Lastly, for \eqref{eq:xb3}, we get 
\begin{align*}
&2^{k(s-1)} \sum_{\substack{k_2 \in [k-10,k+10] \\ k_1<k_2-10}} \big\| P_k[Q_{<k_1-10}u_{<k_1-10}Q_{<k_1+10}u_{k_1}\Box Q_{<k_1-10}u_{k_2}]\big\|_{L_t^1 L_x^2}\\
&\lesssim 2^{k(s-1)} \sum_{\substack{k_2 \in [k-10,k+10] \\ k_1<k_2-10}} \big\|Q_{<k_1+10}u_{k_1}\big\|_{L_t^{2+\gamma} L_x^\infty}\big\|\Box Q_{<k_1-10}u_{k_2}\big\|_{L_{t,x}^2} \\
&\lesssim  T^{\frac{\gamma}{2(2+\gamma)}} \sum_{k_1<k-10} 2^{(-\frac{1}{2+\gamma}+\frac{n}{2}-s)k_1} \|u_{k_1}\| \sum_{j<k_1-10} 2^{(\frac{1}{2}+\frac{n}{2}-s)j} \|u_{k}\|_{S_{loc}} \\
&\lesssim T^{\frac{\gamma}{2(2+\gamma)}} \|u\|_{S_{loc}}.
\end{align*}

Hence we conclude the standard wave term. \par
{\it{Proof of \eqref{eq:locmultilin2_s2}}}: \par
 In the following, we will only use the Strichartz norm of the $N_{loc}$ space to estimate the nonlinearity terms. 

First, we introduce a lemma to deal with the projector $\Pi_{\tilde{u}^{\perp}}$.

\begin{Lemma}
    \label{lem:locmultilin2}
     For $u, \tilde{u}$ as in the Proposition \ref{prop:locmultilinear_s2}. We have the following:
     \begin{equation}\label{eq:locpiu}
        \big\|P_0\big(\Pi_{\tilde{u}^{\perp}}\big((-\triangle)^{\frac12}u\big)\big\|_{S_{loc}} \lesssim (1+\|\tilde{u}\|_{S_{loc}})\sum_{k_1\in \Z} 2^{\sigma |k_1|} \|u_{k_1}\|_{S_{loc}},
        \end{equation}
         and by re-scaling, we also have 
        \begin{equation}\label{eq:locpiu1}
        \big\|P_k\big(\Pi_{\tilde{u}^{\perp}}\big((-\triangle)^{\frac12}u\big)\big\|_{S_{loc}}\lesssim 2^k 2^{(\frac{n}{2}-s)k} (1+\|\tilde{u}\|_{S_{loc}})\sum_{k_1\in \Z} 2^{\sigma |k_1-k|} \|u_{k_1}\|_{S_{loc}}.
        \end{equation} 
        for a suitable $\sigma>0$. 
\end{Lemma}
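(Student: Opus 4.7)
The plan is to mirror the proof of Lemma \ref{lem:multilin2} (the critical $S$ version) and adapt it to the sub-critical setting, taking care to extract the Besov-type weight $2^{\sigma|k_1-k|}$. First I would split
\[
P_0\bigl(\Pi_{\tilde u^{\perp}}(\Lah u)\bigr) = P_0 \Lah u - P_0\!\left(\frac{\tilde u}{|\tilde u|}\cdot (\Lah u)\,\frac{\tilde u}{|\tilde u|}\right).
\]
The linear piece $P_0\Lah u$ is frequency-localized to $|\xi|\sim 1$, so $\Lah$ acts as a bounded multiplier and it is controlled by $\|u_0\|_{S_{\text{loc}}}$, which is already dominated by the right-hand side at $k_1=0$. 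The real work lies in the quadratic piece involving $\tilde u/|\tilde u|$.

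For the quadratic piece I would perform a Littlewood-Paley decomposition of both copies of $\tilde u/|\tilde u|$ and of $\Lah u$, and split into three regimes according to the frequency $2^{k_1}$ of $\Lah u$ relative to $2^0$: (i) the high-frequency case $k_1>10$, where one uses $\|\tilde u_{>k_1-10}\|_{L_t^\infty L_x^2}\lesssim 2^{-\frac{n}{2}k_1}$ from Bernstein and pairs it with $\|\Lah u_{k_1}\|_{L_t^2L_x^\infty}$; (ii) the intermediate case $k_1\in[-10,10]$, which is controlled directly by $\|\Lah u_{[-10,10]}\|_{L_t^2L_x^\infty\cap L_t^\infty L_x^2}$; and (iii) the low-frequency case $k_1<-10$, handled by putting $\tilde u_{>-10}/|\tilde u|$ into $L_t^\infty L_x^2$ and the low-frequency $\Lah u$ into the $L_{t,x}^\infty$--component of $S_{\text{loc}}$. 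In every case, the main ingredient is the Strichartz norm together with a gain $2^{k_1/2}$ or $2^{-|k_1|/2}$ from the half-derivative and Bernstein, producing a bound of the form $\sum_{k_1} 2^{-\tau|k_1|}\|u_{k_1}\|_{S_{\text{loc}}}$ for some $\tau>0$, which upon comparison with the sub-critical scaling gives the stated $2^{\sigma|k_1|}$-weighted bound once $\sigma$ is chosen small enough in terms of $s-\tfrac{n}{2}$. Interpolating between $L_t^2 L_x^\infty$ and $L_t^\infty L_x^2$ within the wave-admissible range then upgrades the estimate to the full $S_{\text{loc}}$ norm, and the factor $(1+\|\tilde u\|_{S_{\text{loc}}})$ comes from Theorem~\ref{thm:funcaction}-type control of $\tilde u/|\tilde u|$ near $S^2$.

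The rescaled version \eqref{eq:locpiu1} would then follow by applying the scaling $u(t,x)\mapsto u(2^{-k}t,2^{-k}x)$, $\tilde u(t,x)\mapsto \tilde u(2^{-k}t,2^{-k}x)$, which transforms $P_0$ into $P_k$ and changes the $S_{\text{loc}}$ norm by the scaling weight $2^{(n/2-s)k}$ (since the norm involves derivatives of order $s-1$ in the space component and the time interval contracts from $T$ to $2^kT$, absorbed into the uniform $T$-dependence). The extra factor $2^k$ comes from the $\Lah$ on a frequency-$2^k$ function.

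The main obstacle will be organizing the bookkeeping: tracking the interplay between the sub-critical scaling weight $2^{(n/2-s)k_1}$ (arising because $S_{\text{loc}}$ measures $\dot H^s$ with $s>\tfrac{n}{2}$ rather than the critical $\tfrac{n}{2}$) and the frequency-cascade weight $2^{\sigma|k_1|}$, so that $\sigma$ can be chosen strictly positive yet small enough that every high-high, high-low and low-high piece sums. A subsidiary point is ensuring that the factor $(1+\|\tilde u\|_{S_{\text{loc}}})$ really appears only once: the two copies of $\tilde u/|\tilde u|$ collapse because one of them is always estimated in $L_{t,x}^\infty$ using the nearness of $\tilde u$ to $S^2$, contributing an $O(1)$ factor rather than a genuine Strichartz norm.
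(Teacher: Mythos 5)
Your plan mirrors the paper's own proof of the lemma point-for-point: the same split of $P_0(\Pi_{\tilde u^\perp}\Lah u)$ into the linear piece $P_0\Lah u$ and the quadratic $\tilde u/\|\tilde u\|$ piece, the same three-regime Littlewood--Paley decomposition in the frequency $k_1$ of $\Lah u$, the same Bernstein bound $\|\tilde u_{>k_1-a}\|_{L_t^\infty L_x^2}\lesssim 2^{-nk_1/2}$, and the same interpolation between $L_t^2L_x^\infty$ and $L_t^\infty L_x^2$ followed by rescaling for \eqref{eq:locpiu1}. The only quibble is cosmetic: the factor $(1+\|\tilde u\|_{S_{\text{loc}}})$ does not come from Theorem~\ref{thm:funcaction} (which is invoked only for the $\Hy$ target) but simply from $\|\tilde u\|_{L^\infty}\sim1$ together with the Strichartz control $\|\tilde u\|_{S_{\text{loc}}}\lesssim1$, exactly as you otherwise describe.
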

\begin{proof}
  We first write
    \begin{align*}
        P_k\big(\Pi_{\tilde{u}^{\perp}}\big((\Lah u\big) = P_k\Lah u - P_k\big((\frac{\tilde{u}}{\|\tilde{u}\|}\cdot(\Lah u)\frac{\tilde{u}}{\|\tilde{u}\|}\big).
        \end{align*}

        For the term involve $\tilde{u}$, we split it into
        \begin{align*}
        P_k\big((\frac{\tilde{u}}{\|\tilde{u}\|}\cdot(\Lah u)\frac{\tilde{u}}{\|\tilde{u}\|}\big) &= P_k\big((\frac{\tilde{u}}{\|\tilde{u}\|}\cdot(\Lah u_{>k+10})\frac{\tilde{u}}{\|\tilde{u}\|}\big)\\
        & + P_k\big((\frac{\tilde{u}}{\|\tilde{u}\|}\cdot(\Lah u_{[k-10,k+10]})\frac{\tilde{u}}{\|\tilde{u}\|}\big)\\
        &+ P_k\big((\frac{\tilde{u}}{\|\tilde{u}\|}\cdot(\Lah u_{<k-10})\frac{\tilde{u}}{\|\tilde{u}\|}\big)\\
        \end{align*}
    
    Since $\tilde{u}$ maps into a small neighborhood of $S^2$ , we have $\big\|\tilde{u}\big\|_{L_{t,x}^\infty} \sim 1$. For $k_1>10$, we know that for $a=5,10$,
    \begin{equation*}
        \|\tilde{u}_{>k_1-a}\|_{L_t^{\infty}L_x^2} \lesssim 2^{-\frac{n}{2}k_1}.
    \end{equation*}

    Using the Bernstein inequality and Hölder's inequality, we have 
    \begin{align*}
    \big\|P_0\left([\frac{\tilde{u}}{\|\tilde{u}\|}]_{>k_1-10}\cdot\Lah u_{k_1}\right)\big\|_{L_t^2L_x^\infty} 
      &\lesssim \big\|P_0\left(\tilde{u}_{>k_1-10}\cdot\Lah u_{k_1}\right)\big\|_{L_t^2L_x^2} \\
      &\le  \ \left\| \tilde{u}_{>k_1-10} \right\|_{L_t^{\infty} L_x^2} \| \Lah u_{k_1} \|_{L_t^2 L_x^{\infty}}\\
      &\lesssim 2^{-\frac{nk_1}{2}}\| \Lah u_{k_1} \|_{L_t^2 L_x^{\infty}}.
    \end{align*}

      Thus we can estimate the first term as 
      \begin{align*}
        &\big\|P_0\big((\frac{\tilde{u}}{\|\tilde{u}\|}\cdot\Lah u_{>10})\frac{\tilde{u}}{\|\tilde{u}\|}\big)\big\|_{L_t^2L_x^\infty\cap L_t^\infty L_x^2} \nonumber \\
        &\leq \sum_{k_1>10}\big\|P_0\big(([\frac{\tilde{u}}{\|\tilde{u}\|}]_{>k_1-10}\cdot\Lah u_{k_1})\frac{\tilde{u}}{\|\tilde{u}\|}\big)\big\|_{L_t^2L_x^\infty\cap L_t^\infty L_x^2} \nonumber \\
        &+\sum_{k_1>10}\big\|P_0\big(([\frac{\tilde{u}}{\|\tilde{u}\|}]_{\leq k_1-10}\cdot\Lah u_{k_1})[\frac{\tilde{u}}{\|\tilde{u}\|}]_{>k_1-5}\big)\big\|_{L_t^2L_x^\infty\cap L_t^\infty L_x^2} \nonumber\\
        &\lesssim  \sum_{k_1>10}\sum_{a=5,10}\big\|[\frac{\tilde{u}}{\|\tilde{u}\|}]_{>k_1-a}\big\|_{L_t^\infty L_x^2}\big\|\Lah u_{k_1}\big\|_{L_t^2L_x^\infty\cap L_t^\infty L_x^2} \\
        &\lesssim \sum_{k_1>10}2^{-\frac{3k_1}{2}}\big\|u_{k_1}\big\|_{S_{loc}}
        \end{align*}    
    \footnotetext{We use the Cauchy-Schwartz inequality s.t. $\sum_{k_1>10}2^{(1-n)k_1} c_k \lesssim \|c\|_{\ell^2} \lesssim \epsilon$}

    Similarly, for the second term on the right, we have 
\begin{align*}
\big\|P_0\big((\frac{\tilde{u}}{\|\tilde{u}\|}\cdot(-\triangle)^{\frac12}u_{[-10,10]})\frac{\tilde{u}}{\|\tilde{u}\|}\big)\big\|_{L_t^2L_x^\infty\cap L_t^\infty L_x^2}&\lesssim \big\|(-\triangle)^{\frac12}u_{[-10,10]}\big\|_{L_t^2L_x^\infty\cap L_t^\infty L_x^2}\\
&\lesssim \sum_{k_1\in [-10,10]} 2^{\frac{|k_1|}{2}}\big\|u_{k_1}\big\|_{S_{loc}} 
\end{align*}

Finally, for the last term, we deduce
\begin{align*}
&\big\|P_0\big((\frac{\tilde{u}}{\|\tilde{u}\|}\cdot(-\triangle)^{\frac12}u_{<-10})\frac{\tilde{u}}{\|\tilde{u}\|}\big)\big\|_{L_t^2L_x^\infty\cap L_t^\infty L_x^2}\\
&\lesssim \big\|(-\triangle)^{\frac12}u_{<k-10}\big\|_{L_t^2 L_x^\infty\cap L_{t,x}^{\infty}}\big\|[\frac{\tilde{u}}{\|\tilde{u}\|}]_{>-10}\big\|_{L_t^\infty L_x^2}\\
&\lesssim  \sum_{k_1<-10}2^{-\frac{|k_1|}{2}}\big\|u_{k_1}\big\|_{S_{loc}} 
\end{align*}

Thus we can conclude that 
\begin{equation}
    \big\|P_0\big((\frac{\tilde{u}}{\|\tilde{u}\|}\cdot(\Lah u)\frac{\tilde{u}}{\|\tilde{u}\|}\big) \big\|_{L_t^2L_x^\infty\cap L_t^\infty L_x^2}  \lesssim C_0\epsilon
\end{equation}

Interpolating between $L_t^2 L_x^\infty$ and $L_t^\infty L_x^2$ norms within the wave admissible range, we obtain that 
\begin{equation*}
    \big\|P_0\big((\frac{\tilde{u}}{\|\tilde{u}\|}\cdot(\Lah u)\frac{\tilde{u}}{\|\tilde{u}\|}\big) \big\|_{S_{loc}}  \lesssim  (1+\|\tilde{u}\|_S)\sum_{k_1\in \Z} 2^{\sigma_1 |k_1|} \|u_{k_1}\|_S
\end{equation*}

 Scaling the frequency to $P_k$, we will have \eqref{eq:locpiu1}
\end{proof}

With Lemma \ref{lem:locmultilin2}, it's easy to see that
\begin{align*}
    &\big\|\Pi_{\tilde{u}^{\perp}}\big((-\triangle)^{\frac12}u \big)\big\|_{L_t^\infty L_x^2+ L_{t,x}^\infty}\lesssim \sum _{k_3\in Z}2^{-\sigma|k-k_3|}\big\|P_{k_3}u\big\|_{S_{loc}}(1+\big\|\tilde{u}\big\|_{S_{loc}}), \\
    &\big\|P_{[k-20,k+20]}\big(\Pi_{\tilde{u}_{\perp}}\big((-\triangle)^{\frac{1}{2}}u\big)\big)\big\|_{L_t^\infty L_x^2}\lesssim 2^{k(1-s)} \sum _{k_3\in Z}2^{-\sigma|k-k_3|}\big\|P_{k_3}u\big\|_{S_{loc}}(1+\big\|\tilde{u}\big\|_{S_{loc}}) ,\\
    &\big\|P_{<k+10}\Pi_{\tilde{u}_{\perp}}\big((-\triangle)^{\frac{1}{2}}u\big)\big\|_{L_t^2 L_x^\infty}\lesssim \sum _{k_3\in Z}2^{-\sigma|k-k_3|}\big\|P_{k_3}u\big\|_{S_{loc}}(1+\big\|\tilde{u}\big\|_{S_{loc}}).
\end{align*}

Now we start to estimate the second non-local term \eqref{eq:locmultilin2_s2}. Write 
\begin{align}
\Pi_{\tilde{u}_{\perp}}\big((-\triangle)^{\frac{1}{2}}u\big)(u\cdot (-\triangle)^{\frac{1}{2}}u) &= \sum_{|k_1-k_2|\leq 10}\Pi_{\tilde{u}_{\perp}}\big((-\triangle)^{\frac{1}{2}}u\big)(u_{k_1}\cdot (-\triangle)^{\frac{1}{2}}u_{k_2}) \label{local2.1}\\
&+ \sum_{k_1}\Pi_{\tilde{u}_{\perp}}\big((-\triangle)^{\frac{1}{2}}u\big)(u_{k_1}\cdot (-\triangle)^{\frac{1}{2}}u_{<k_1-10}) \label{local2.2}\\
&+\sum_{k_2}\Pi_{\tilde{u}_{\perp}}\big((-\triangle)^{\frac{1}{2}}u\big)(u_{<k_2-10}\cdot (-\triangle)^{\frac{1}{2}}u_{k_2}) \label{local2.3}
\end{align}

For the first term of \eqref{local2.1}, we have
\begin{align*}
&2^{k(s-1)} \big\|P_k\big[ \sum_{|k_1-k_2|\leq 10}\Pi_{\tilde{u}_{\perp}}\big((-\triangle)^{\frac{1}{2}}u\big)(u_{k_1}\cdot (-\triangle)^{\frac{1}{2}}u_{k_2})\big]\big\|_{L_t^1 L_x^2} \nonumber \\
&\lesssim 2^{k(s-1)} \sum_{\substack{|k_1-k_2|\leq 10\\ k_1<k-20}}\big\|P_{[k-20,k+20]}\big(\Pi_{\tilde{u}_{\perp}}\big((-\triangle)^{\frac{1}{2}}u\big)\big)\big\|_{L_t^\infty L_x^2}\big\|u_{k_1}\big\|_{L_t^2 L_x^\infty}\big\|(-\triangle)^{\frac{1}{2}}u_{k_2}\big\|_{L_t^2 L_x^\infty} \\
& +  2^{k(s-1)} \sum_{\substack{|k_1-k_2|\leq 10\\ k_1\geq k-20}}\big\|\Pi_{\tilde{u}_{\perp}}\big((-\triangle)^{\frac{1}{2}}u\big)\big\|_{L_t^\infty L_x^2+L_{t,x}^\infty}\big\|u_{k_1}\big\|_{L_t^2 L_x^4}\big\|(-\triangle)^{\frac{1}{2}}u_{k_2}\big\|_{L_t^2 L_x^4}. 
\end{align*}

For the first part, we deduce that
\begin{align*}
    &2^{k(s-1)}\sum_{\substack{|k_1-k_2|\leq 10\\ k_1<k-20}}\big\|P_{[k-20,k+20]}\big(\Pi_{\tilde{u}_{\perp}}\big((-\triangle)^{\frac{1}{2}}u\big)\big)\big\|_{L_t^\infty L_x^2}\big\|u_{k_1}\big\|_{L_t^2 L_x^\infty} \big\|(-\triangle)^{\frac{1}{2}}u_{k_2}\big\|_{L_t^2 L_x^\infty}\\
    &\lesssim T^{\frac{\gamma}{2(2+\gamma)}} \ \ \sum_{\substack{|k_1-k_2|\leq 10\\ k_1<k-20}}\big\|u_{k_1}\big\|_{L_t^{2+\gamma} L_x^\infty} \big\|(-\triangle)^{\frac{1}{2}}u_{k_2}\big\|_{L_t^2 L_x^\infty} \\
    &\quad \big( \sum _{k_3\in Z}2^{-|k-k_3|}\big\|P_{k_3}u\big\|_{S_{loc}}(1+\big\|\tilde{u}\big\|_{S_{loc}}) \big) \\
    &\lesssim T^{\frac{\gamma}{2(2+\gamma)}} \ \sum_{\substack{|k_1-k_2|\leq 10\\ k_1<k-20}} 2^{(\frac{n}{2}-s)(k_1+k_2)} \ 2^{\frac{k_2}{2}-\frac{k_1}{2+\gamma}} \ \big\|u_{k_1}\big\|_{S_{loc}} \big\|u_{k_2}\big\|_{S_{loc}} \\
    &\quad  \big( \sum _{k_3\in Z}2^{-|k-k_3|}\big\|P_{k_3}u\big\|_{S_{loc}}(1+\big\|\tilde{u}\big\|_{S_{loc}}) \big) \\
    &\lesssim T^{\frac{\gamma}{2(2+\gamma)}} \  \  \|u\|_{S_{loc}}^2 \ \big( \sum _{k_3\in Z}2^{-|k-k_3|}\big\|P_{k_3}u\big\|_{S_{loc}}(1+\big\|\tilde{u}\big\|_{S_{loc}}) \big)
    \end{align*}

The second part is easier to treat. We have
\begin{align*}
&  2^{k(s-1)} \sum_{\substack{|k_1-k_2|\leq 10\\ k_1\geq k-20}}\big\|\Pi_{\tilde{u}_{\perp}}\big((-\triangle)^{\frac{1}{2}}u\big)\big\|_{L_t^\infty L_x^2+L_{t,x}^\infty}\big\|u_{k_1}\big\|_{L_t^2 L_x^4}\big\|(-\triangle)^{\frac{1}{2}}u_{k_2}\big\|_{L_t^2 L_x^4}\\
&\lesssim  2^{k(s-1)}T^{\frac{\gamma}{2(2+\gamma)}} \sum_{\substack{|k_1-k_2|\leq 10\\ k_1\geq k-20}}\big\|u_{k_1}\big\|_{L_t^{2+\gamma} L_x^4}\big\|(-\triangle)^{\frac{1}{2}}u_{k_2}\big\|_{L_t^2 L_x^4} \\
&\quad \big( \sum_{k_3\in Z}2^{-\sigma|k_3|}\big\|P_{k_3}u\big\|_{S_{loc}}(1+\big\|\tilde{u}\big\|_{S_{loc}}) \big)\\
&\lesssim 2^{k(s-1)} T^{\frac{\gamma}{2(2+\gamma)}} \big(\sum_{\substack{|k_1-k_2|\leq 10\\ k_1\geq k-20}} 2^{(n-2s)k_1} \ 2^{-(\frac{n}{2}-\frac{\gamma}{2(2+\gamma)})k_1} \ \big\|u_{k_1}\big\|_{S_{loc}}\big\|u_{k_2}\big\|_{S_{loc}}\big) \\
 &\quad \big( \sum_{k_3\in Z}2^{-\sigma|k_3|}\big\|P_{k_3}u\big\|_{S_{loc}}(1+\big\|\tilde{u}\big\|_{S_{loc}}) \big)\\
&\lesssim  T^{\frac{\gamma}{2(2+\gamma)}} \ \big(\sum _{k_3\in Z}2^{-\sigma|k_3|}\big\|P_{k_3}u\big\|_{S_{loc}}\big)\big\|u\big\|_{S_{loc}}^2(1+\big\|\tilde{u}\big\|_{S_{loc}}).
\end{align*}
This concludes the required bound for the first term \eqref{local2.1} on the right-hand side. \par

Now we move to estimate the second term \eqref{local2.2}. For a given $k\in \Z$, we split it as a sum of three terms:
\begin{align}
&\sum_{k_1}\Pi_{\tilde{u}_{\perp}}\big((-\triangle)^{\frac{1}{2}}u\big)(u_{k_1}\cdot (-\triangle)^{\frac{1}{2}}u_{<k_1-10}) \nonumber\\
&= \sum_{k_1\geq k+5}\Pi_{\tilde{u}_{\perp}}\big((-\triangle)^{\frac{1}{2}}u\big)(u_{k_1}\cdot (-\triangle)^{\frac{1}{2}}u_{<k_1-10}) \label{eq:locproj21}\\
& +  \sum_{k_1\in[k-5,k+5]}\Pi_{\tilde{u}_{\perp}}\big((-\triangle)^{\frac{1}{2}}u\big)(u_{k_1}\cdot (-\triangle)^{\frac{1}{2}}u_{<k_1-10}) \label{eq:locproj22}\\
& +  \sum_{k_1<k-5}\Pi_{\tilde{u}_{\perp}}\big((-\triangle)^{\frac{1}{2}}u\big)(u_{k_1}\cdot (-\triangle)^{\frac{1}{2}}u_{<k_1-10}) \label{eq:locproj23}
\end{align} 

For \eqref{eq:locproj21}, we have
\begin{align*}
&2^{k(s-1)} \big\|P_k\big( \sum_{k_1\geq k+5}\Pi_{\tilde{u}_{\perp}}\big((-\triangle)^{\frac{1}{2}}u\big)(u_{k_1}\cdot (-\triangle)^{\frac{1}{2}}u_{<k_1-10})\big)\big\|_{L_t^1 L_x^2}\\
&\lesssim  T^{\frac{\gamma}{2(2+\gamma)}} 2^{k(s-1)} \sum_{k_1\geq k+5} \Big(\big\|u_{k_1}\big\|_{L_t^2L_x^\infty} \big\|(-\triangle)^{\frac{1}{2}}u_{<k_1-10}\big\|_{L_t^{2+\gamma}  L_x^\infty} \\
& \big\|P_{[k_1-5,k_1+5]}\big(\Pi_{\tilde{u}_{\perp}}\big((-\triangle)^{\frac{1}{2}}u\big)\big)\big\|_{L_t^\infty L_x^2} \Big)\\
&\lesssim T^{\frac{\gamma}{2(2+\gamma)}}  2^{k(s-1)} \sum_{\substack{k_1\geq k+5 \\ k_2<k_1-10}}2^{(\frac{n+1}{2}-2s)k_1} \ 2^{(\frac{n}{2}-s)k_2} 2^{k_2-\frac{k_2}{2+\gamma}} \big\|u_{k_1}\big\|_{S_{loc}}\big\|u_{k_2}\big\|_{S_{loc}} 
\\ & \big( \sum _{k_3\in Z}2^{-\sigma|k_1-k_3|}\big\|P_{k_3}u\big\|_{S_{loc}}(1+\big\|\tilde{u}\big\|_{S_{loc}})\big)\\
&\lesssim T^{\frac{\gamma}{2(2+\gamma)}} \big\|u\big\|_{S_{loc}}^2 \big(\sum_{k_3}2^{-\sigma|k_3|}\big\|P_{k_3}u\big\|_{S_{loc}}\big)(1+\big\|\tilde{u}\big\|_{S_{loc}}). 
\end{align*}
Similarly, for \eqref{eq:locproj22} involves intermediate $k_1$, we have 

\begin{align*}
&2^{k(s-1)} \big\|P_k\big[ \sum_{k_1\in[k-5,k+5]}\Pi_{\tilde{u}_{\perp}}\big((-\triangle)^{\frac{1}{2}}u\big)(u_{k_1}\cdot (-\triangle)^{\frac{1}{2}}u_{<k_1-10})\big]\big\|_{L_t^1 L_x^2}\\
&\lesssim T^{\frac{\gamma}{2(2+\gamma)}}2^{k(s-1)} \sum_{k_1\in[k-5,k+5]}\big\|u_{k_1}\big\|_{L_t^\infty L_x^2}\big\|(-\triangle)^{\frac{1}{2}}u_{<k_1-10}\big\|_{L_t^{2+\gamma} L_x^\infty} \\
& \big\|P_{<k+10}\Pi_{\tilde{u}_{\perp}}\big((-\triangle)^{\frac{1}{2}}u\big)\big\|_{L_t^2 L_x^\infty} \\
&\lesssim T^{\frac{\gamma}{2(2+\gamma)}} \   \|u\|_{S_{loc}}^2  
\big(\sum _{k_3\in Z}2^{-|k-k_3|}\big\|P_{k_3}u\big\|_{S_{loc}}(1+\big\|\tilde{u}\big\|_{S_{loc}}) \big)
\end{align*}

Lastly, for \eqref{eq:locproj23} when $k_1$ small, we get
\begin{align*}
    &2^{k(s-1)} \big\|P_k\big( \sum_{k_1\leq k-5}\Pi_{\tilde{u}_{\perp}}\big((-\triangle)^{\frac{1}{2}}u\big)(u_{k_1}\cdot (-\triangle)^{\frac{1}{2}}u_{<k_1-10})\big)\big\|_{L_t^1 L_x^2}\\
    &\lesssim  T^{\frac{\gamma}{2(2+\gamma)}} 2^{k(s-1)} \sum_{k_1\leq k-5}\big\|u_{k_1}\big\|_{L_t^2 L_x^\infty} \big\|(-\triangle)^{\frac{1}{2}}u_{<k_1-10}\big\|_{L_t^{2+\gamma} L_x^\infty}\\
    &\big\|P_{[k-20,k+20]}\big(\Pi_{\tilde{u}_{\perp}}\big((-\triangle)^{\frac{1}{2}}u\big)\big)\big\|_{L_t^\infty L_x^2}\\
    &\lesssim T^{\frac{\gamma}{2(2+\gamma)}} \sum_{\substack{k_1\leq k-5 \\ k_2<k_1-10}}2^{(\frac{n}{2}-s)(k_1+k_2)} 2^{-\frac{k_1}{2}} 2^{k_2-\frac{k_2}{2+\gamma}} \big\|u_{k_1}\big\|_{S_{loc}}\big\|u_{k_2}\big\|_{S_{loc}} \\
    & \big( \sum _{k_3\in Z}2^{-\sigma|k-k_3|}\big\|P_{k_3}u\big\|_{S_{loc}}(1+\big\|\tilde{u}\big\|_{S_{loc}})\big)\\
    &\lesssim T^{\frac{\gamma}{2(2+\gamma)}} \big\|u\big\|_{S_{loc}}^2 \big(\sum_{k_3}2^{-\sigma|k-k_3|}\big\|P_{k_3}u\big\|_{S_{loc}}\big)(1+\big\|\tilde{u}\big\|_{S_{loc}}). 
    \end{align*}

Finally, the third term \eqref{local2.3} is the most delicate one. The derivative $(-\triangle)^{\frac12}$ falls on the higher -frequency term $u_{k_2}$. We need to use the Lemma~\ref{lem:multilinestimates} to deal with it.

Firstly, we know the difference 
\begin{align*}
\Pi_{\tilde{u}_{\perp}}\big((-\triangle)^{\frac{1}{2}}u\big)(u_{<k_2-10}\cdot (-\triangle)^{\frac{1}{2}}u_{k_2}) -\Pi_{\tilde{u}_{\perp}}\big((-\triangle)^{\frac{1}{2}}u\big)(-\triangle)^{\frac{1}{2}}(u_{<k_2-10}\cdot u_{k_2})
\end{align*}

can be estimated like \eqref{local2.2} above. Thus it is enough to consider $\Pi_{\tilde{u}_{\perp}}\big((-\triangle)^{\frac{1}{2}}u\big)(-\triangle)^{\frac{1}{2}}(u_{<k_2-10}\cdot u_{k_2})$ instead. 

Using the geometric condition the condition $u\cdot u = 1$, we have
\begin{equation}\label{eq:locorthomicro}
0 = u\cdot u - Q\cdot Q = \sum_{|k_1-k_2|\leq 10} u_{k_1}u_{k_2} + 2\sum_{k_1}u_{k_1}\cdot u_{<k_1-10}
\end{equation}

This implies that 
\begin{equation}\label{eq:loccancel}
0 =  \sum_{|k_1-k_2|<10} (-\triangle)^{\frac{1}{2}}\big(u_{k_1}u_{k_2}\big) + 2\sum_{k_1}(-\triangle)^{\frac{1}{2}}\big(u_{k_1}\cdot u_{<k_1-10}\big)
\end{equation}

Hence we have 
\begin{align*}
&\sum_{k_2}\Pi_{\tilde{u}_{\perp}}\big((-\triangle)^{\frac{1}{2}}u\big)(-\triangle)^{\frac{1}{2}}(u_{<k_2-10}\cdot u_{k_2})\\& = -\sum_{|k_3 - k_4|<10}\frac12\Pi_{\tilde{u}_{\perp}}\big((-\triangle)^{\frac{1}{2}}u\big)(-\triangle)^{\frac{1}{2}}(u_{k_3}\cdot u_{k_4}).
\end{align*}

Thereafter, we can deduce
\begin{align*}
&2^{k(s-1)}\big\|P_k\big[\sum_{\substack{|k_3 - k_4|<10\\ k_3<k-20}}\frac12\Pi_{\tilde{u}_{\perp}}\big((-\triangle)^{\frac{1}{2}}u\big)(-\triangle)^{\frac{1}{2}}(u_{k_3}\cdot u_{k_4})\big]\big\|_{L_t^1 L_x^2}\\
&\lesssim 2^{k(s-1)} \sum_{\substack{|k_3 - k_4|<10\\ k_3<-20}}\big\|P_{[k-10,k+10]}\big[\Pi_{\tilde{u}_{\perp}}\big((-\triangle)^{\frac{1}{2}}u\big)\big]\big\|_{L_t^\infty L_x^2}\big\|(-\triangle)^{\frac{1}{2}}(u_{k_3}\cdot u_{k_4})\big\|_{L_t^1 L_x^\infty} \\
&\lesssim T^{\frac{\gamma}{2(2+\gamma)}}  \sum_{\substack{|k_3 - k_4|<10\\ k_3<k-20}}2^{k_3}\big\|u_{k_3}\big\|_{L_t^{2+\gamma} L_x^\infty}\big\|u_{k_4}\big\|_{L_t^2L_x^\infty}  \big( (1+\big\|\tilde{u}\big\|_{S_{loc}})\sum_{k_3}2^{-\sigma|k-k_3|}\big\|P_{k_3}u\big\|_{S_{loc}} \big) \\
&\lesssim T^{\frac{\gamma}{2(2+\gamma)}}  \sum_{\substack{|k_3 - k_4|<10\\ k_3<k-20}} 2^{(\frac{n}{2}-s)(k_3+k_4)} 2^{k_3-\frac{k_3}{2+\gamma}} 2^{-\frac{k_4}{2}} \|u_{k_3}\|_{S_{loc}}  \|u_{k_4}\|_{S_{loc}} \\
&\big( (1+\big\|\tilde{u}\big\|_{S_{loc}})\sum_{k_3}2^{-\sigma|k-k_3|}\big\|P_{k_3}u\big\|_{S_{loc}} \big) \\
&\lesssim  T^{\frac{\gamma}{2(2+\gamma)}} \|u\|_{S_{loc}}^2 \big( (1+\big\|\tilde{u}\big\|_{S_{loc}})\sum_{k_3}2^{-\sigma|k-k_3|}\big\|P_{k_3}u\big\|_{S_{loc}} \big)
\end{align*}

When $k_3>k-20$, we put both $u_{k_{3}}$ into $L_t^2 L_x^4$, and $u_{k_4}$ into $L_t^{2+\gamma} L_x^4$ to get desired result. So we conclude the estimate of \eqref{eq:locmultilin2_s2}. 

{\it{Proof of \eqref{eq:locmultilin3_s2}}}. \par
 We first get rid of the projection operator $\Pi_{\tilde{u}^{\perp}}$. As in the Lemma \ref{lem:locmultilin2}, we have
\[
\big\|P_k\big[\Pi_{\tilde{u}^{\perp}}F\big]\big\|_{L_t^1 \dot{H}_x^{s-1}}\lesssim (1+\big\|\tilde{u}\big\|_{S_{loc}})\sum_{k_3 \in \Z}2^{-\sigma|k-k_3|}\big\|P_{k_3}F\big\|_{L_t^1\dot{H}_x^{s-1}}.
\]

Therefore, it suffices to show that 
\begin{equation}
    \begin{split}
        &\|\sum_{k_1,k_2 \in \Z}\ P_k \big[ u \times \Lah \left(u_{k_1} \times \Lah u_{k_2} \right) - u \times \left(u_{k_1} \times (-\Delta) u_{k_2} \right) \big]\|_{L_t^1 \dot{H}^{s-1}} \\ 
        &\lesssim  T^{\frac{\gamma}{2(2+\gamma)}}  \|u\|_{S_{loc}}^3 
    \end{split}
\end{equation}

We consider different cases in the following. For $k_1>k+10$, we first split it into 
\begin{align*}
    &2^{k(s-1)}\big\|\sum_{\substack{k_1>k+10 \\ k_2 \in \Z}}P_k\big[u \times (-\triangle)^{\frac{1}{2}}(u_{k_1} \times (-\triangle)^{\frac{1}{2}}u_{k_2}) - u \times(u_{k_1} \times (-\triangle)u_{k_2})\big]\big\|_{L_t^1 L_x^2}\\
    &=2^{k(s-1)}\big\|\sum_{\substack{|k_1 - k_2|<5\\k_1>k+10}}P_k\big[u \times (-\triangle)^{\frac{1}{2}}(u_{k_1} \times (-\triangle)^{\frac{1}{2}}u_{k_2}) - u \times(u_{k_1} \times (-\triangle)u_{k_2})\big]\big\|_{L_t^1 L_x^2}\\
    &+2^{k(s-1)}\big\|\sum_{\substack{k_1>k+10 \\ k_2<k_1-5}  }P_k\big[u \times (-\triangle)^{\frac{1}{2}}(u_{k_1} \times (-\triangle)^{\frac{1}{2}}u_{k_2}) - u \times(u_{k_1} \times (-\triangle)u_{k_2})\big]\big\|_{L_t^1 L_x^2} \\
    &+2^{k(s-1)}\big\|\sum_{\substack{k_1>k+10 \\ k_2>k_1+5}  }P_k\big[u \times (-\triangle)^{\frac{1}{2}}(u_{k_1} \times (-\triangle)^{\frac{1}{2}}u_{k_2}) - u \times(u_{k_1} \times (-\triangle)u_{k_2})\big]\big\|_{L_t^1 L_x^2}
\end{align*}
For the first term, we need to use Lemma \ref{lem:multilinestimates} again, in particular, \eqref{singlefreq} and \eqref{singlefreq2}. We have
\begin{align*}
&2^{k(s-1)} \big\|\sum_{\substack{|k_1 - k_2|<5\\k_1>k+10}}P_k\big[u \times (-\triangle)^{\frac{1}{2}}(u_{k_1} \times (-\triangle)^{\frac{1}{2}}u_{k_2}) - u \times(u_{k_1} \times (-\triangle)u_{k_2})\big]\big\|_{L_t^1 L_x^2}\\
&\lesssim \|u\|_{L_{t,x}^\infty} \ 2^{k(s-1)} \ \sum_{\substack{|k_1 - k_2|<5\\k_1>k+10}} 2^{k_1+k_2}\big\|P_{k_1}u\big\|_{L_t^2 L_x^4}\big\|u_{k_2}\big\|_{L_t^2 L_x^4} \\
&\lesssim T^{\frac{\gamma}{2(2+\gamma)}} \|u\|_{S_{loc}} \sum_{\substack{|k_1 - k_2|<5\\k_1>k+10}}2^{k(s-1)} 2^{(\frac{n}{4}-s-\frac{1}{2+\gamma})k_1} 
2^{(\frac{n}{4}-s-\frac{1}{2})k_2} \ \big\|u_{k_1}\big\|_{S_{loc}} \big\|u_{k_2}\big\|_{S_{loc}}\\
&\lesssim  T^{\frac{\gamma}{2(2+\gamma)}} \|u\|_{S_{loc}}^3
\end{align*}

Then, for the high-low intersection second term, we derive
\begin{align*}
&2^{k(s-1)}\big\|\sum_{\substack{k_1>k+10 \\ k_2<k_1-5}  }P_k\big[u \times (-\triangle)^{\frac{1}{2}}(u_{k_1} \times (-\triangle)^{\frac{1}{2}}u_{k_2}) - u \times(u_{k_1} \times (-\triangle)u_{k_2})\big]\big\|_{L_t^1 L_x^2}\\
    &\lesssim 2^{k(s-1)} \sum_{\substack{k_1>k+10 \\ k_2<k_1-5}} \|u_{k_1}\|_{L_t^2L_x^\infty}\  2^{k_1+k_2} \|u_{k_1}\|_{L_t^\infty L_x^2} \ \|u_{k_2}\|_{L_t^{2+\gamma} L_x^\infty} \\
    &\lesssim  2^{k(s-1)} \sum_{k_1>k+10} 2^{(\frac{n}{2}-2s+1)k_1} \|u_{k_1}\|^2_{S_{loc}} \ \sum_{k_2<k_1-5} 2^{(\frac{n}{2}-s+\frac{1+\gamma}{2+\gamma})k_2} 2^{-\frac{k_1}{2}}\|u_{k_2}\|_{S_{loc}} \\
    &\lesssim T^{\frac{\gamma}{2(2+\gamma)}} \|u\|^3_{S_{loc}}
 \end{align*}
 
The leftover case, when $k_1>k+10, \ k_2>k_1+5$, can be estimated symmetrically as
\begin{align*}
    &2^{k(s-1)}\big\|\sum_{\substack{k_1>k+10 \\ k_2>k_1+5}} P_k\big[u \times (-\triangle)^{\frac{1}{2}}(u_{k_1} \times (-\triangle)^{\frac{1}{2}}u_{k_2}) - u \times(u_{k_1} \times (-\triangle)u_{k_2})\big]\big\|_{L_t^1 L_x^2}\\
        &\lesssim 2^{k(s-1)}\sum_{\substack{k_1>k+10 \\ k_2>k_1+5}} \|u_{k_2}\|_{L_t^2L_x^\infty}\  2^{k_1+k_2} \|u_{k_1}\|_{L_t^{2+\gamma} L_x^\infty} \ \|u_{k_2}\|_{L_t^\infty L_x^2} \\
         &\lesssim  2^{k(s-1)} \sum_{k_2>k+10} 2^{(\frac{n}{2}-2s+1)k_2} \|u_{k_2}\|^2_{S_{loc}} \ \sum_{k_1<k_2-5} 2^{(\frac{n}{2}-s+\frac{1+\gamma}{2+\gamma})k_1} 2^{-\frac{k_2}{2}}\|u_{k_1}\|_{S_{loc}} \\
        &\lesssim T^{\frac{\gamma}{2(2+\gamma)}} \|u\|^3_{S_{loc}}
     \end{align*}

Hence, we conclude that the case when $k_1>k+10$. Moreover, the above estimates can also be used to get the estimate for $k_2>k+10$. Therefore, we conclude the case when both frequencies are larger. \par 

Next, we consider the case for $k_1,k_2<k-10$. By Lemma~\ref{lem:multilinestimates}, with respect to the $N_{loc}$ norm, we have
\begin{align*}
&P_k\big[ u \times (-\triangle)^{\frac12}(u_{k_1} \times (-\triangle)^{\frac12}u_{k_2})\big] \simeq
P_k\big[ u \times ((-\triangle)^{\frac12}u_{k_1} \times (-\triangle)^{\frac12}u_{k_2})\big] +T^{\frac{\gamma}{2(2+\gamma)}} \|u\|^3_{S_{loc}}, \\
&P_k\big[ u \times \left(u_{k_1} \times (-\Delta) u_{k_2} \right)    \big] \simeq
P_k\big[ u \times ((-\triangle)^{\frac12}u_{k_1} \times (-\triangle)^{\frac12}u_{k_2})\big] +T^{\frac{\gamma}{2(2+\gamma)}} \|u\|^3_{S_{loc}}.
\end{align*}

Therefore, we consider the following instead.
\begin{align*}
    &2^{k(s-1)} \big\| \sum_{k_1,k_2<k-10} P_k\big[ P_{[k-5,k+5]}u \times \Lah u_{k_1} \times \Lah u_{k_2}\big] \big\|_{L_t^1 L_x^2}  \\
    &\lesssim 2^{k(s-1)} T^{\frac{\gamma}{2(2+\gamma)}} \sum_{k_1,k_2<k-10} \|P_{[k-5,k+5]}u\|_{L_t^{\infty} L_x^2} \|\Lah u_{k_1}\|_{L_t^{2+\gamma} L_x^\infty} \|\Lah u_{k_2}\|_{L_t^2 L_x^\infty} \\
    &\lesssim T^{\frac{\gamma}{2(2+\gamma)}} \sum_{k_1,k_2<k-10}\ 2^{-k} \|u_k\|_{S_{loc}}  2^{(1-\frac{1}{2+\gamma}+\frac{n}{2}-s)k_1} \|u_{k_1}\|_{S_{loc}} 2^{(\frac{1}{2}+\frac{n}{2}-s)k_1} \|u_{k_2}\|_{S_{loc}} \\
    &\lesssim T^{\frac{\gamma}{2(2+\gamma)}} \|u\|^3_{S_{loc}}.
\end{align*}

Finally, we consider the scenario when one frequency is intermediate in $[k-10,k+10]$ and the other is small. \par
{\it{(i): $k_1\in [k-10,k+10], k_2<k+10$.}} We split it into two cases:
\begin{align*}
    &2^{k(s-1)}\big\|\sum_{\substack{k_1 \in[k-10,k+10] \\ k_2<k+10}  }P_k\big[u\times (-\triangle)^{\frac{1}{2}}(u_{k_1} \times (-\triangle)^{\frac{1}{2}}u_{k_2}) - u \times(u_{k_1} \times (-\triangle)u_{k_2})\big]\big\|_{L_t^1 L_x^2}\\
    &=2^{k(s-1)}\big\|\sum_{\substack{k_1 \in[k-10,k+10] \\ k_2<k+10}  }P_k\big[u_{\geq k_2-10} \times (-\triangle)^{\frac{1}{2}}(u_{k_1} \times (-\triangle)^{\frac{1}{2}}u_{k_2}) \\
    &- u_{\geq k_2-10} \times(u_{k_1} \times (-\triangle)u_{k_2})\big]\big\|_{L_t^1 L_x^2}\\
    &+2^{k(s-1)}\big\|\sum_{\substack{k_1 \in[k-10,k+10] \\ k_2<k+10}  }P_k\big[u_{<k_2-10} \times (-\triangle)^{\frac{1}{2}}(u_{k_1} \times (-\triangle)^{\frac{1}{2}}u_{k_2}) \\
    &- u_{<k_2-10} \times(u_{k_1} \times (-\triangle)u_{k_2})\big]\big\|_{L_t^1 L_x^2}
\end{align*}

Firstly, we have

\begin{align*}
    &2^{k(s-1)}\big\|\sum_{\substack{k_1 \in[k-10,k+10] \\ k_2<k+10}  }P_k\big[u_{\geq k_2-10} \times (-\triangle)^{\frac{1}{2}}(u_{k_1} \times (-\triangle)^{\frac{1}{2}}u_{k_2}) - u_{\geq k_2-10} \times(u_{k_1} \times (-\triangle)u_{k_2})\big]\big\|_{L_t^1 L_x^2}\\
        &\lesssim 2^{k(s-1)} T^{\frac{\gamma}{2(2+\gamma)}} \sum_{\substack{k_1 \in[k-10,k+10] \\ k_2<k+10}} \|u_{\geq k_2-10}\|_{L_t^2L_x^\infty}\  2^{k_1+k_2} \|u_{k_1}\|_{L_t^\infty L_x^2} \ \|u_{k_2}\|_{L_t^{2+\gamma} L_x^\infty} \\
        &\lesssim T^{\frac{\gamma}{2(2+\gamma)}} \|u_{k}\|_{S_{loc}} \sum_{\substack{ k_2<k+10 \\ k_3 \geq k_2-10}} 2^{(-\frac{1}{2}+\frac{n}{2}-s)k_3} \ 2^{(1-\frac{k_2}{2+\gamma}+\frac{n}{2}-s)k_2}    \ \|u_{k_2}\|_{S_{loc}}\|u_{k_3}\|_{S_{loc}} \\
        &\lesssim T^{\frac{\gamma}{2(2+\gamma)}} \|u\|^3_{S_{loc}}
     \end{align*}

Move to the second part, using the Lemma~\ref{lem:multilinestimates} again. We know that 
\begin{align*}
    (-\triangle)^{\frac{1}{2}}(u_{k_1} \times (-\triangle)^{\frac{1}{2}}u_{k_2}) - (u_{k_1} \times (-\triangle)u_{k_2}) \simeq \Lah u_{k_1} \times \Lah u_{k_2}.
\end{align*}

By the cross product rule $$a \times (b \times c)= b(a\cdot c)-c(a\cdot b),$$ 

we have  
\begin{align*}
    &P_k\big[u_{<k_2-10} \times (-\triangle)^{\frac{1}{2}}(u_{k_1} \times (-\triangle)^{\frac{1}{2}}u_{k_2}) - u_{<k_2-10} \times(u_{k_1} \times (-\triangle)u_{k_2})\big] \\
   &\simeq P_k\big[ u_{<k_2-10} \times \Lah u_{k_1} \times \Lah u_{k_2}\big] \\
   &= P_k\big[\Lah u_{k_1} (u_{<k_2-10} \cdot \Lah u_{k_2}) \big] -P_k \big[\Lah u_{k_2} (u_{<k_2-10}  \cdot \Lah u_{k_1}) ]
\end{align*}

For the first term, we have 
\begin{align*}
    &2^{k(s-1)}\big\|\sum_{\substack{k_1 \in[k-10,k+10] \\ k_2<k+10}  }P_k\big[\Lah u_{k_1} (u_{<k_2-10} \cdot \Lah u_{k_2}) \big]\big\|_{L_t^1 L_x^2} \\
    &\lesssim 2^{k(s-1)} \sum_{k_1 \in[k-10,k+10]} \|\Lah u_{k_1}\|_{L_t^\infty L_x^2} \sum_{k_2<k+10} \|P_{k_2}(u_{<k_2-10} \cdot \Lah u_{k_2})\|_{L_t^1L_x^\infty} \\
    &\lesssim \|u\|_{S_{loc}} \ \sum_{k_2<k+10} \|P_{k_2}(u_{<k_2-10} \cdot \Lah u_{k_2})\|_{L_t^1L_x^\infty} \\
    &\lesssim \|u\|_{S_{loc}} \sum_{k_2<k+10} \|\Lah P_{k_2}(u_{<k_2-10} \cdot  u_{k_2})\|_{L_t^1L_x^\infty} + T^{\frac{\gamma}{2(2+\gamma)}} \|u\|_{S_{loc}}^2
\end{align*}

By the geometric property property \eqref{eq:locorthomicro}, we know that
\begin{equation}\label{eq:locorthomicrolocalized}
0=P_{k}(u \cdot u) = 2 P_{k} (u_{[k-10,k+10]}\cdot u_{<k-10}) + P_{k}(u_{>k+10} \cdot u_{>k+10}).
\end{equation}

After that, we can replace
\begin{equation}
    \label{locerror3_local}
    P_{k_2}(u_{<{k_2}-10} \cdot u_{[{k_2}-10,{k_2}+10]}) = -\frac{1}{2} P_{k_2}(u_{\geq{k_2}+10} \cdot u_{\geq {k_2}+10}).
\end{equation}

Hence, we finish this term by
\begin{align*}
    &\sum_{k_2<k+10} \|\Lah P_{k_2}(u_{<k_2-10} \cdot  u_{k_2})\|_{L_t^1L_x^\infty} \\
    &\simeq \sum_{k_2<k+10} \|\Lah P_{k_2}(u_{\geq k_2+10} \cdot  u_{\geq k_2+10})\|_{L_t^1L_x^\infty} \\
    &\lesssim T^{\frac{\gamma}{2(2+\gamma)}} \sum_{\substack{k_2<k+10 \\ k_3 \geq k_2+10}} 2^{k_2} 2^{-(\frac{1}{2}+\frac{1}{2+\gamma})k_3} 2^{(n-2s)k_3} \|u_{k_3}\|_{S_{loc}}^2 \\
    &\lesssim T^{\frac{\gamma}{2(2+\gamma)}} \|u\|_{S_{loc}}^2.
\end{align*}



The second part can be treated similarly by the Lemma~\ref{lem:multilinestimates} and the geometric property. Thus we conclude this case. \par

{\it{(ii)}}: $k_2\in [k-10,k+10], k_1<k+10$. As shown in the case {\it{(i)}}, we further split the case with respect to the frequency of the first term involving $u$. When $u_{\geq k_1-10}$, it can be estimate as above. Thus, we focus on the case for $u_{< k_1-10}$.


By the Lemma~\ref{lem:multilinestimates}, we have 
\begin{align*}
&2^{k(s-1)} \big\|\sum_{\substack{k_1<k+10 \\ k_2 \in [k-10,k+10]}} P_k\big[u_{<k_1-10} \times (-\triangle)^{\frac{1}{2}}(u_{k_1} \times (-\triangle)^{\frac{1}{2}}u_{k_2}) \\
& - (-\triangle)^{\frac{1}{2}}\big(u_{<k_1-10} \times (u_{k_1} \times (-\triangle)^{\frac{1}{2}}u_{k_2})\big)\big]\big\|_{L_t^1 L_x^2}\\
& \lesssim  2^{k(s-1)} T^{\frac{\gamma}{2(2+\gamma)}} \sum_{\substack{k_1<k+10 \\ k_2 \in [k-10,k+10]}} \big\|(-\triangle)^{\frac{1}{2}}u_{<k_1-10}\big\|_{L_t^2 L_x^\infty}\big\|u_{k_1}\big\|_{L_t^{2+\gamma} L_x^\infty}\big\|(-\triangle)^{\frac{1}{2}}u_{k_2}\big\|_{L_t^\infty L_x^2} \\
 &\lesssim T^{\frac{\gamma}{2(2+\gamma)}} \|u_k\|_{S_{loc}} \ \sum_{\substack{k_1<k+10 \\ k_3 <k_1-10}} 2^{(\frac{1}{2}+\frac{n}{2}-s)k_3} \|u_{k_3}\|_{S_{loc}} \ 2^{(-\frac{1}{2+\gamma}+\frac{n}{2}-s)k_1} \|u_{k_1}\|_{S_{loc}}   \\
 &\lesssim T^{\frac{\gamma}{2(2+\gamma)}} \|u\|_{S_{loc}}^3
\end{align*}

Therefore, we can consider $P_k \big[(-\triangle)^{\frac{1}{2}}\big(u_{<k_1-10} \times (u_{k_1} \times (-\triangle)^{\frac{1}{2}}u_{k_2})\big)\big]$ instead. Using the cross product rule, we reformulate the expression as 

\begin{equation}\label{eq:locthefourterms}\begin{split}
&P_k\big[(-\triangle)^{\frac{1}{2}}\big(u_{<k_1-10} \times (u_{k_1} \times (-\triangle)^{\frac{1}{2}}u_{k_2})\big) - u_{<k_1-10} \times(u_{k_1} \times (-\triangle)u_{k_2})\big]\\
& = P_k \big[ (-\triangle)^{\frac{1}{2}}\big( u_{k_1}(u_{<k_1-10}\cdot (-\triangle)^{\frac{1}{2}}u_{k_2}) -  (-\triangle)^{\frac{1}{2}}u_{k_2}(u_{<k_1-10}\cdot u_{k_1}) \big) \big]\\
& - P_k[\big(u_{k_1}( u_{<k_1-10}\cdot (-\triangle)u_{k_2}) - (-\triangle)u_{k_2}(u_{<k_1-10} \cdot u_{k_1})\big)] \\
&= P_k \big[ (-\triangle)^{\frac{1}{2}}\big( u_{k_1}(u_{<k_1-10}\cdot (-\triangle)^{\frac{1}{2}}u_{k_2}) -  \big(u_{k_1}( u_{<k_1-10}\cdot (-\triangle)u_{k_2} \big) \big]\\
&+P_k\big[ (-\triangle)u_{k_2}(u_{<k_1-10} \cdot u_{k_1})\big) -\Lah\big( \Lah u_{k_2} (u_{<k_1-10}\cdot u_{k_1}) \big) \big].
\end{split}\end{equation}

Furthermore, we also know the differences 
\begin{align*}
    \| u_{<k_1-10} \cdot \Lah u_{k_2} - \Lah(u_{<k_1-10} \cdot u_{k_2}) \|_{L_t^2 L_x^2}
 \end{align*}

 and

 \begin{align*}
     &\| u_{<k_1-10} \cdot (-\triangle) u_{k_2}-  (-\triangle) (u_{<k_1-10} \cdot u_{k_2}) \|_{L_t^2 L_x^2} 
 \end{align*}
are all desirable bounds so that we can replace them respectably.

Additionally, by putting $u_{k_1}$ into $L_t^2 L_x^\infty$, we also know that
\begin{align*}
&P_k\big[(-\triangle)^{\frac{1}{2}}\big(u_{k_1}(u_{<k_1-10}\cdot (-\triangle)^{\frac{1}{2}}u_{k_2})\big) \big]-  P_k\big[u_{k_1}( u_{<k_1-10}\cdot (-\triangle)u_{k_2})\big]\\
& \simeq P_k \big[(-\triangle)^{\frac{1}{2}}\big(u_{k_1}(-\triangle)^{\frac{1}{2}}(u_{<k_2-10}\cdot u_{k_2}) \big] - P_k \big[u_{k_1}(-\triangle)( u_{<k_2-10}\cdot u_{k_2})\big] 
\end{align*}

Then we can use the geometric property \eqref{eq:locorthomicro} to replace
$$
P_{k_2}(u_{<k_2-10}\cdot u_{k_2}) = -\frac{1}{2} P_{k_2} (u_{k_2+10} \cdot u_{k_2+10}).
$$

We conclude that

\begin{align*}
    &2^{k(s-1)} \big\| \sum_{\substack{k_1<k+10 \\k_2\in[k-10,k+10]}} P_k \big[(-\triangle)^{\frac{1}{2}}\big(u_{k_1}(-\triangle)^{\frac{1}{2}}(u_{<k_2-10}\cdot u_{k_2}) \big] \\ 
    &- P_k\big[u_{k_1}(-\triangle)( u_{<k_2-10}\cdot u_{k_2})\big]\big\|_{L_t^1 L_x^2} \\
&\lesssim 2^{k(s-1)} T^{\frac{\gamma}{2(2+\gamma)}} \sum_{\substack{k_1<k+10 \\k_2\in[k-10,k+10]}} \big\|(-\triangle)^{\frac{1}{2}}u_{k_1}\big\|_{L_t^2 L_x^\infty} \ 2^{k_2}\ \big\|u_{>k_2+10}\big\|_{L_t^{2+\gamma} L_x^\infty}\big\|u_{>k_2+10}\big\|_{L_t^\infty L_x^2} \\
&\lesssim \sum_{k_1<k+10} \ 2^{(\frac{n}{2}+\frac{1}{2}-s)k_1} \|u_{k_1}\|_{S_{loc}} \ \sum_{\substack{k_2\in[k-10,k+10] \\k_3>k_2+10}} \  2^{ks} 2^{(\frac{n}{2}-2s-\frac{1}{2+\gamma})k_3} \|u_{k_3}\|^2_{S_{loc}} \\
&\lesssim T^{\frac{\gamma}{2(2+\gamma)}} \|u\|^3_{S_{loc}}.
\end{align*}

It only remains to bound the difference 
\begin{equation*}
    P_k\big[ (-\triangle)u_{k_2}(u_{<k_1-10} \cdot u_{k_1})\big) -\Lah\big( \Lah u_{k_2} (u_{<k_1-10}\cdot u_{k_1}) \big) \big].
\end{equation*}

As before, we replace  
\begin{equation*}
    P_{k_1}(u_{<k_1-10} \cdot u_{k_1}) = -\frac{1}{2} P_{k_1}(u_{>k_1+10} \cdot u_{>k_1+10}).
\end{equation*}

Then we deduce
\begin{align*}
    & 2^{k(s-1)}  \| \sum_{\substack{k_1 <k+10 \\k_2\in [k-10,k+10]} } P_k\big[ (-\triangle)u_{k_2}P_{k_1}(u_{>k_1+10} \cdot u_{>k_1+10} ) \\
    &-\Lah \big( (-\triangle)^{\frac{1}{2}}u_{k_2} P_{k_1}(u_{>k_1+10} \cdot u_{>k_1+10} ) \big)\big]\|_{L_t^1 L_x^2} \\
    &\lesssim 2^{k(s-1)} \sum_{\substack{k_1 <k+10 \\k_2\in [k-10,k+10]} } 2^{k_1+k_2} \| P_{k_1}(u_{>k_1+10} \cdot u_{>k_1+10} ) \|_{L_t^1 L_x^\infty} \| u_{k_2}\|_{L_t^\infty L_x^2} \\
    &\lesssim T^{\frac{\gamma}{2(2+\gamma)}} \|u_{k_2}\|_{S_{loc}} \ \sum_{\substack{k_1 <k+10 \\k_2\in [k-10,k+10]} } 2^{k_1} 2^{(-\frac{1}{2+\gamma} -\frac{1}{2}+n-2s)k_3} \|u_{k_3}\|^2_{S_{loc}}\\
    &\lesssim T^{\frac{\gamma}{2(2+\gamma)}} \|u\|^3_{S_{loc}}.
    \end{align*}
    Hence, we conclude the multilinear estimates. 


\bibliographystyle{AIMS} 
\bibliography{paper}
\end{document}